\newcommand{\R}{{\mathbb R}}
\newcommand{\Z}{{\mathbb Z}}
\newcommand{\cM}{{\mathcal M}}
\newcommand{\e}{\varepsilon}
\newcommand{\vp}{\varphi}
\newcommand{\osc}{\operatornamewithlimits{osc}}
\newcommand{\ra}{\rightarrow}
\newcommand{\La}{\triangle}
\newtheorem{thm}[subsubsection]{Theorem}
\newtheorem{lem}[subsubsection]{Lemma}
\newtheorem{definition}[subsubsection]{Definition}
\newtheorem{cor}[subsubsection]{Corollary}
\newtheorem{remark}[subsubsection]{Remark}
\newtheorem{prop}[subsubsection]{Proposition}
\numberwithin{equation}{subsection}
\title[Homogenization of soft inclusions]{The viscosity Method for the Homogenization of soft inclusions }
\author{Ki-ahm Lee}
\address{Seoul National University, Seoul 151-747, Korea}
\email{kiahm@snu.ac.kr}
\author{Minha Yoo}
\address{Seoul National University, Seoul 151-747, Korea }
\email{minha00@snu.ac.kr}
\thanks {2000 Mathematics Subject Classification: 35J15, 35J66,
74Q05, 74Q15, 74Q24}
\begin{document}

\begin{abstract}
In this paper, we consider periodic soft inclusions $T_{\e}$ with periodicity $\e$, where the solution , $u_{\e}$, satisfies  semi-linear elliptic equations of non-divergence in $\Omega_{\e}=\Omega\setminus \overline{T}_\e$ with a Neumann data on $\partial T^{\mathfrak a}$ . The difficulty lies in the non-divergence structure of the operator where the standard energy method based on the divergence theorem can not be applied. The main object is developing a viscosity method to find the homogenized equation satisfied
by the limit of $u_{\e}$, called as $u$, as $\e$ approaches to zero.
We introduce the concept of a compatibility condition between the equation and the Neumann condition on the boundary  for the existence of uniformly bounded  periodic first correctors. The concept of second corrector has been developed to show the limit, $u$, is the viscosity solution of a homogenized equation.
\end{abstract}
\maketitle

\section{Introduction}
\subsection{} 
Let $\Omega$ be a bounded and connected domain in $\mathbb{R}^n$ with a smooth boundary. We are going to define a perforated domain $\Omega_{\e}$ by removing a $\e$-periodic balls out of $\Omega$.
For each $m \in \mathbb{Z}^n$, let  $B_a(m)$ be a ball with center $m$ and radius $0 < \mathfrak{a} <\frac{1}{2}$.
Let
\begin{equation*} \begin{split}
T^{\mathfrak a} &:= \bigcup_{m \in \mathbb{Z}^n} B_{\mathfrak a}(m), \\
T_{\e} &:= \e T^{\mathfrak a},
\end{split} \end{equation*}
and
$$ \Omega_\e = \Omega \setminus \overline{T}_\e .$$

The homogenization of partial differential equations in a perforated domain with Dirichlet or Neumann boundary value has been studied by many authors. Please refer \cite{JKO} and \cite{CL}  for details.

 In this paper, we will consider the generalization of the following soft inclusions where the diffusion coefficients are zero on the holes:
\begin{equation}\label{eqn-main-0} \begin{cases}
\triangle u_\e = f(x) &\text{ in } \Omega_\e\\
\displaystyle \frac{\partial u_\e}{\partial \nu} = 0 &\text{ on } \partial T_\e\\
u_\e = \vp(x) &\text{ on } \partial \Omega \setminus \partial  T_\e.
\end{cases} \end{equation}

In  \cite{JKO}, they show that $u_\e$ converges to $u_0$ weakly in $H^1(\Omega)$(strongly in $L^2(\Omega)$),  and that the limit $u_0$ satisfies
\begin{equation} \label{eqn-rst-0} \begin{cases}
\overline{a}_{ij} D_{ij} u_0 = \theta f(x) &\text{ in } \Omega \\
u_0 = \vp(x) &\text{ on } \partial \Omega
\end{cases} \end{equation}
for some constant matrix $(\overline{a}_{ij})$ where $\theta = \displaystyle\int_{[0,1]^n} \chi_{T^{\mathfrak a}}$. Their method relies on the energy estimates and compensated compactness to pass the limit in the weak formulation. Such energy method cannot be applicable to nonlinear equations of  non-divergence type since the solutions may have  different order of energies, \cite{CL}, even though they satisfy equations in the same class.

In this paper, we are going to develop a viscosity method
to find the homogenization process of the following semi-linear equation of non-divergence type:
\begin{equation}\label{eqn-main-1} \tag{$P_{\e}$}
\begin{cases}
L\left(D^2u_\e,u_\e,x,\frac{x}{\e}\right) = f\left(x,\frac{x}{\e}\right) &\text{ in } \Omega_\e\\
G\left( Du_\e(x), \frac{x}{\e} \right) = 0 &\text{ on } \partial T_\e \\
u_\e = \vp(x) &\text{ on } \partial \Omega \setminus \partial  T_\e
\end{cases} \end{equation}
where $L\left(D^2u_\e,u_\e,x,\frac{x}{\e}\right) = a_{ij}\left(\frac{x}{\e}\right) D_{ij} u_\e + c\left(u_\e, x, \frac{x}{\e}\right)$, $G\left( Du_\e(x), \frac{x}{\e} \right)=b^i\left( \frac{x}{\e} \right) D_i u_\e(x)$ and $\vp \in \mathcal{C}^2(\overline{\Omega})$. And the equation satisfies the following conditions.

{\bf Conditions I:}
\begin{enumerate}[(i)]
\item \label{con-unif-elliptic}
$L$ is  uniformly elliptic:  there are positive constants $0<\lambda\leq \Lambda<\infty$  such that
\begin{equation*}
\lambda |\xi|^2 \le a_{ij}(y) \xi^i \xi^j \le \Lambda |\xi|^2
\end{equation*}
for all $\xi \in \R^n$ and for all $y \in \R^n$.
\item \label{con-oblique}
$b^i(y)$ satisfies the uniform oblique condition:  there is a uniform constant $\mu$ satisfying $0<\mu \le b(y) \cdot\nu < \infty$. And, for the convenience, we also assume  $\|b\|_{L^{\infty}}\leq 1$.
\item \label{con-y-periodicity}
$a^{ij}(y)$, $b^i(y)$, $c(r,x,y)$ and $f(x,y)$ are periodic in $y$-variable: for every $m \in \Z^n$, we have
\begin{equation*}
a^{ij}(y) = a^{ij}(y+m), \, b^i(y) = b^i(y+m),\, c(r,x,y) = c(r,x,y+m) \text{ and } f(x,y) = f(x,y+m).
\end{equation*}
\item \label{con-c-decr}
$c(0,x,y)=0$ and $c(r,x,y)$ is non-increasing with $r$ variables.
\item\label{con-alpha}
\begin{equation*} \begin{aligned}
\| a_{ij} \|_{C^\alpha(\R^n \setminus T^{\mathfrak a})} + \| b^{i}(\mathfrak{a}~\cdot) \|_{C^{1,\alpha}\left(\frac{1}{a} (\R^n \setminus T^{\mathfrak a})\right)} &\le \Lambda \text{ and,}\\
\| c(r,x,\cdot) \|_{C^\alpha(\R^n \setminus T^{\mathfrak a})} + \| f(x,\cdot) \|_{C^\alpha(\R^n \setminus T^{\mathfrak a})} &\text{ is bounded}
\end{aligned}  \end{equation*}
for every $(r,x) \in \R \times \R^n$.
\item \label{con-conti}
$f$ and $c$ are uniformly continuous with respect to $y$ variable. That is, for any given $x_0 \in \overline{\Omega}$ and $r_0 \in \R$,
\begin{equation*}
\lim_{x \ra x_0} \sup_{y \in \R^n} \left| f(x,y) -f(x_0,y) \right| = 0 \text{ and } \lim_{(r,x) \ra (r_0,x_0)} \sup_{y \in \R^n} \left| c(r,x,y) -c(r_0,x_0,y) \right| = 0
\end{equation*}
\end{enumerate}
Throughout this paper, we always assume the conditions I above.

\subsection{Main Theorems}
Our first theorem concerns about the existence of compatibility constant for Neumann Problem.
\begin{thm}[Compatibility Condition]\label{thm-main-c}
Consider the equation defined as follow:
\begin{equation} \label{eqn-main-com} \begin{cases}
a_{ij}(y) D_{ij} v(y) = f(y) &\text{ in } \R^n \setminus T^{\mathfrak a} \\
b^i(y) \left( \xi^i + D_iv(y) \right) + \gamma = g(y) &\text{ on } \partial T^{\mathfrak a}.
\end{cases} \end{equation}
Assume that
\begin{equation*}
\|a_{ij} \|_{C^\alpha(\R^n \setminus T^{\mathfrak a})} + \| b^i(\mathfrak{a}~\cdot) \|_{C^{1,\alpha}(\frac{1}{a}(\R^n \setminus T^{\mathfrak a}) )} \le \Lambda
\end{equation*}
and $\| f \|_{C^\alpha(\R^n \setminus T^{\mathfrak a})} + \| g \|_{C^{1,\alpha}(\R^n \setminus T^{\mathfrak a})}$ is bounded. Then, for any given $\xi \in \R^n$, there is a unique constant $\gamma = \gamma(\xi; (a_{ij}),b^i, f, g, \mathfrak{a})$ that makes the equation \eqref{eqn-main-com} has a soution $v$.

\end{thm}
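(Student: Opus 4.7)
\emph{Proof plan.} The strategy is an ergodic (discount) regularization adapted to oblique derivative problems. For each $\delta>0$ I would consider the periodic cell problem
\begin{equation*}
\begin{cases}
a_{ij}(y)D_{ij}v_\delta = f(y) & \text{in } \R^n\setminus T^{\mathfrak{a}},\\
b^i(y)(\xi^i+D_iv_\delta) + \delta v_\delta = g(y) & \text{on } \partial T^{\mathfrak{a}},\\
v_\delta \text{ is periodic},
\end{cases}
\end{equation*}
and then obtain $(v,\gamma)$ by passing to $\delta\to 0$, setting $\gamma := \lim \delta v_\delta(y_*)$ at a fixed base point $y_* \in \R^n \setminus T^{\mathfrak{a}}$ and $v := \lim (v_\delta - v_\delta(y_*))$. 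The dissipative term $\delta v_\delta$ in the Neumann condition, together with Hopf's lemma and the oblique condition (ii), yields uniqueness of $v_\delta$ via the strong maximum principle; existence of $v_\delta\in C^{2,\alpha}$ then follows from the continuity method applied on the fundamental cell using the Schauder theory for oblique derivative problems.

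The key estimate is a uniform oscillation bound $\osc v_\delta \le C$ independent of $\delta$. I would obtain this by combining the Krylov--Safonov weak Harnack inequality for uniformly elliptic non-divergence equations, applied to the periodic extension of $v_\delta$, with Lieberman's boundary Hölder estimate for oblique derivative problems near $\partial T^{\mathfrak{a}}$. Given this bound, the shifted functions $u_\delta := v_\delta - v_\delta(y_*)$ and the constants $\gamma_\delta := \delta v_\delta(y_*)$ satisfy $\|u_\delta\|_\infty \le C$ together with
\begin{equation*}
a_{ij} D_{ij} u_\delta = f \quad \text{in } \R^n\setminus T^{\mathfrak{a}}, \qquad b^i(\xi^i + D_i u_\delta) + \delta u_\delta + \gamma_\delta = g \quad \text{on } \partial T^{\mathfrak{a}}.
\end{equation*}
The oblique Schauder estimates combined with evaluation of the Neumann condition at a boundary extremum of $u_\delta$ then force $|\gamma_\delta|\le C$ uniformly, and a standard Arzelà--Ascoli argument gives, along a subsequence $\delta_k\to 0$, $\gamma_{\delta_k}\to\gamma$ in $\R$ and $u_{\delta_k}\to v$ in $C^{2,\alpha}_{\rm loc}$, with the limit pair $(v,\gamma)$ solving \eqref{eqn-main-com}.

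Uniqueness of $\gamma$ is a direct consequence of Hopf's lemma and condition (ii). If $(v_1,\gamma_1)$ and $(v_2,\gamma_2)$ are two solutions, then $w := v_1 - v_2$ is a periodic solution of $a_{ij} D_{ij} w = 0$ satisfying $b^i D_i w = \gamma_2 - \gamma_1$ on $\partial T^{\mathfrak{a}}$. By the strong maximum principle any nonconstant $w$ must attain its maximum on $\partial T^{\mathfrak{a}}$; at such a point the tangential gradient of $w$ vanishes, so $b\cdot Dw = (b\cdot\nu)\partial_\nu w > 0$ by Hopf and the oblique condition, forcing $\gamma_2 > \gamma_1$. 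The analogous argument at the minimum reverses this inequality, so $w$ must be constant and hence $\gamma_1 = \gamma_2$.

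The main obstacle I anticipate is the uniform-in-$\delta$ oscillation bound together with the companion bound on $|\gamma_\delta|$. In the divergence setting of \cite{JKO} the compatibility constant is read off via integration by parts, but that tool is unavailable here; it has to be replaced by quantitative Krylov--Safonov estimates up to the oblique boundary and by a careful bootstrap that tracks how the Schauder constants depend on $\gamma_\delta$, so as to prevent the solution from escaping to infinity as $\delta\to 0$.
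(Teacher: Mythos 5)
Your overall strategy is the same as the paper's: regularize by adding a discount term on the oblique boundary (the paper uses $\e^2 v_\e$ where you write $\delta v_\delta$), prove a $\delta$-independent oscillation bound, upgrade to uniform $C^{2,\alpha}$ bounds by oblique Schauder theory, extract the limit pair $(v,\gamma)$ by Arzel\`a--Ascoli, and prove uniqueness of $\gamma$ by the strong maximum principle plus the obliqueness condition at a touching point (the paper's Lemma \ref{lem-alpha-uniq} does exactly your Hopf-type argument in viscosity form). Those parts are sound.

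However, there is a genuine gap at the step you yourself flag as the main obstacle: the uniform bound $|\gamma_\delta|=\delta|v_\delta|\le C$. The mechanism you propose --- oblique Schauder estimates plus evaluation of the boundary condition at an extremum of $u_\delta$, closed by an unspecified ``bootstrap'' --- is circular. The Schauder estimate for $u_\delta$ has the form $\|u_\delta\|_{C^{2,\alpha}}\le C\bigl(\|u_\delta\|_\infty+\|f\|_{C^\alpha}+\|g-b\cdot\xi-\gamma_\delta-\delta u_\delta\|_{C^{1,\alpha}}\bigr)\le C(1+|\gamma_\delta|)$, so evaluating $\gamma_\delta=g-b\cdot(\xi+Du_\delta)-\delta u_\delta$ at a boundary point only yields $|\gamma_\delta|\le C(1+|\gamma_\delta|)$, which does not close. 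Moreover the oscillation bound cannot be obtained first and independently: for $\hat v_\delta=v_\delta-\min v_\delta$ the oblique boundary datum is $g-b\cdot\xi-\delta\min v_\delta$, so Krylov--Safonov/Lieberman estimates give $\osc v_\delta\le C(1+|\delta\min v_\delta|)$, i.e.\ they presuppose the very bound in question (this is exactly the structure of the paper's Lemma \ref{lem-precom-osc}, whose right-hand side contains $|\gamma_\e|$). The paper closes the loop \emph{before} any Harnack or Schauder step, by an explicit periodic barrier: with $h(y)=-|y|^{-\alpha}+\mathfrak a^{-\alpha}$ and $\alpha=\alpha(n,\lambda,\Lambda)$ large one has $\cM^+(D^2h)\le 0$, and a normalized version $\widetilde h$ satisfies $a_{ij}D_{ij}\widetilde h\le -1$ with a controlled oblique derivative $-\delta_0$ on $\partial T^{\mathfrak a}$; then $v_+=\min_{m\in\Z^n}\widetilde h(y-m)+\delta^{-1}\delta_0$ is a periodic supersolution and $-v_+$ a subsolution, so comparison yields $\|\delta v_\delta\|_\infty\le C(n,\lambda,\Lambda,\mu,\mathfrak a)\bigl(\|f\|_\infty+\|g\|_\infty+|\xi|\bigr)$. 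You need this barrier construction (or an equivalent a priori sup bound on $\delta v_\delta$) as the first step; with it, the rest of your plan goes through in the order: sup bound on $\delta v_\delta$, then oscillation, then Schauder, then compactness and uniqueness.
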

\begin{definition} \item \begin{enumerate}[(i)]
\item
We are going to call $\gamma(\xi; (a_{ij}),b^i, f, g, \mathfrak{a})$ a  compatibility constant of the equation \eqref{eqn-main-com}.
\item
Now suppose that $f=g=0$. If $\gamma=\gamma((a_{ij}),b^i) = 0$ for all $\xi \in \R^n$ and the size of halls $\mathfrak{a}$,  then we call $(a_{ij})$ and $b^i$ (or the equation \eqref{eqn-main-1}) satisfies the compatibility condition.
\end{enumerate}\end{definition}

We remark that Laplace equation equipped the Neumann boundary condition satisfies the compatibility condition. We will show it in chapter \ref{sec-com}.

Now let us introduce our main theorem:
\begin{thm}[Main Theorem]\label{thm-main-1}
Let $u_{\e}$ be a viscosity solution of \eqref{eqn-main-1}. Suppose that our equation satisfies the conditions I and
\begin{enumerate}
\item
the equation \eqref{eqn-main-1} satisfies the compatibility condition,
\item
$u_\e$ is bounded uniformly on $\e$, and $u^* = u_*$ on the $\partial \Omega$ where $u^*$ and $u_*$ is same in definition \ref{def-lim-u}.
\item
$0<\mathfrak{a} \le {\mathfrak a}_0$ for uniform constant ${\mathfrak a}_0$ in theorem \ref{thm-cor2-elliptic}.
\end{enumerate}
Then, there exists an uniformly elliptic operator $\overline{L}$. And $u_\e$, solution of \eqref{eqn-main-1}, converges to $u$, solution of the equation \eqref{eqn-rst-1}, uniformly.
\begin{equation} \begin{cases}\label{eqn-rst-1}
\overline{L}(D^2 u, u, x) = 0 &\text{ in }\Omega \\
u = \vp(x) &\text{ on } \partial \Omega.
\end{cases} \end{equation}
\end{thm}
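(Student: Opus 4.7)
The strategy is the perturbed test-function method of Evans, adapted to the perforated Neumann setting. First I would define the candidate homogenized operator $\overline{L}(M,r,x_0)$ through a cell problem built from $(M,r,x_0)$: for a symmetric matrix $M$, a real $r$, and $x_0\in\overline{\Omega}$, consider
\begin{equation*}
\begin{cases}
 a_{ij}(y) D_{ij} w(y) = f(x_0,y) - c(r,x_0,y) - a_{ij}(y) M_{ij} & \text{in } \R^n\setminus T^{\mathfrak a},\\
 b^i(y)\bigl(\xi_i + D_i w(y)\bigr) + \gamma = 0 & \text{on } \partial T^{\mathfrak a}.
\end{cases}
\end{equation*}
By Theorem \ref{thm-main-c} there is a unique compatibility constant $\gamma$, and the linearity of the cell problem combined with the standing compatibility hypothesis on $(a_{ij},b^i)$ makes $\gamma$ independent of $\xi$; I would set $\overline{L}(M,r,x_0):=-\gamma$. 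Uniform ellipticity and the non-increasing behaviour in $r$ then follow from the linear/monotone dependence of $\gamma$ on the source $F_{M,r,x_0}(y):=f(x_0,y)-c(r,x_0,y)-a_{ij}(y)M_{ij}$, via the maximum principle applied to the cell problem.

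Next I would pass to the upper and lower half-relaxed limits $u^*$ and $u_*$ of $u_\e$ in the sense of Barles--Perthame (Definition~\ref{def-lim-u}); uniform boundedness and $u^*=u_*=\vp$ on $\partial\Omega$ are provided by hypothesis. The core step is to show that $u^*$ is a viscosity subsolution and $u_*$ a supersolution of $\overline{L}(D^2u,u,x)=0$. Fix a smooth $\phi$ touching $u^*$ strictly from above at an interior point $x_0$, set $M_0=D^2\phi(x_0)$, $r_0=\phi(x_0)$, $\xi_0=D\phi(x_0)$, let $w_1$ solve the cell problem at $(M_0,r_0,x_0)$, and let $w_2$ be the second corrector of Theorem~\ref{thm-cor2-elliptic}; introduce
\begin{equation*}
\phi_\e(x):=\phi(x)+\e^2 w_1\!\left(\tfrac{x}{\e}\right)+\e^3 w_2\!\left(\tfrac{x}{\e}\right),
\end{equation*}
and let $x_\e$ be touching points of $u_\e-\phi_\e$; a subsequence converges to $x_0$.

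For interior $x_\e\in\Omega_\e$, substituting $\phi_\e$ into $L(D^2 u_\e,u_\e,x,x/\e)=f(x,x/\e)$ and invoking the equation for $w_1$ at $y=x_\e/\e$ cancels the $O(1)$ terms and yields $\overline{L}(M_0,r_0,x_0)\le o(1)$. For $x_\e\in\partial T_\e$, one expands $G(D\phi_\e,x_\e/\e)=b^i(x_\e/\e)\bigl(D_i\phi(x_\e)+\e D_i w_1(x_\e/\e)+\e^2 D_i w_2(x_\e/\e)\bigr)$; the cell boundary condition delivers $b^i(\xi_i+D_i w_1)=-\gamma=\overline{L}(M_0,r_0,x_0)$ up to an $O(|x_\e-x_0|)$ error, while $w_2$ is engineered precisely to absorb the residual $O(\e)$ boundary term. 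Passing $\e\to 0$ in both cases gives $\overline{L}(M_0,r_0,x_0)\le 0$; the argument for $u_*$ is symmetric.

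Finally, uniform ellipticity and properness of $\overline{L}$ furnish a viscosity comparison principle which, combined with $u^*=u_*=\vp$ on $\partial\Omega$, forces $u^*\le u_*$ in $\Omega$; the trivial reverse inequality then makes $u:=u^*=u_*$ the unique continuous viscosity solution of \eqref{eqn-rst-1} and upgrades the half-relaxed convergence to uniform convergence. I expect the main obstacle to be the analysis of touching points on $\partial T_\e$: the first corrector alone leaves an $O(\e)$ error in the Neumann inequality, and preserving the strict sign required by the viscosity Neumann test demands both the second corrector of Theorem~\ref{thm-cor2-elliptic} and sharp $C^{1,\alpha}$ regularity of $w_1$ up to $\partial T^{\mathfrak a}$---which is where Condition~I(v) on the coefficients and the smallness hypothesis $\mathfrak a\le\mathfrak a_0$ enter the argument.
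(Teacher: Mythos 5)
Your overall strategy (perturbed test functions, half-relaxed limits, comparison for the effective equation) is the same as the paper's, but the corrector ansatz at the heart of it is mis-scaled, and as a consequence your cell problem does not define the correct effective operator. In the expansion $u_\e \approx P(x) + \e\, v\!\left(\tfrac{x}{\e};\xi(x)\right) + \e^2 w\!\left(\tfrac{x}{\e}\right)$ the \emph{first} corrector must enter at order $\e$, because on $\partial T_\e$ the Neumann expression contains the $O(1)$ term $b^i\xi_i$, and only a corrector whose gradient is $O(1)$, namely $D_y v$ with $b^i(\xi_i+D_i v)=0$ (this is exactly where the compatibility condition is used, giving $\gamma=0$), can cancel it. In your ansatz $\phi_\e=\phi+\e^2 w_1(x/\e)+\e^3 w_2(x/\e)$ the gradient contribution of $w_1$ is only $\e D_y w_1$, so at a touching point on $\partial T_\e$ you are left with $G(D\phi_\e,x/\e)=b^i\xi_i+O(\e)$, which has no sign, and the claimed identity $b^i(\xi_i+D_iw_1)=-\gamma$ is not what your test function actually produces; conversely, if you rescale $w_1$ to order $\e$ so that its gradient is $O(1)$ on the boundary, then the interior equation at order $\e^{-1}$ forces $a_{ij}D_{ij}w_1=0$, contradicting the interior cell equation you wrote. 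The two requirements cannot be met by a single corrector: the paper needs both the first corrector $v$ (interior equation $a_{ij}D_{ij}v=0$, boundary $b^i(\xi+Dv)^i=0$) and a genuinely second corrector $w_\e$ solving the penalized problem \eqref{eqn-cor2e}.

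Because of this, your proposed operator $\overline{L}(M,r,x_0):=-\gamma$ from the single cell problem with right-hand side $f-c-a_{ij}M_{ij}$ omits precisely the terms through which the microstructure enters: the interaction matrix $Z(y,M)=M\,DV+(DV)^tM$ in the interior equation and the order-$\e$ boundary source $b^iM^{il}v^l$ in \eqref{eqn-cor2e}. In the paper $\overline{L}(M,r_0,x_0)$ is the limit of $\e^2w_\e$ for that problem, and even in the model Laplace/Neumann case the explicit formula in Section \ref{sec-cor2 and ellipticity} shows a nontrivial weight $\frac{|Q\setminus B_{\mathfrak a}|}{|Q\setminus B_{\mathfrak a}|+|\partial B_{\mathfrak a}|}$ multiplying $\operatorname{tr}(M)$, which your operator cannot reproduce. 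Relatedly, your claim that uniform ellipticity of $\overline{L}$ ``follows from the maximum principle'' bypasses the real difficulty: ellipticity is exactly where the hypothesis $\mathfrak a\le\mathfrak a_0$ is needed, and the paper proves it (Theorem \ref{thm-cor2-elliptic}) by constructing a nonsmooth periodic barrier for the difference of second correctors, using the quantitative bounds $|Dv^i|\le C$ and $|Dv^i(y)|\le C\mathfrak a/d(y,T^{\mathfrak a})$ from Lemmas \ref{lem-cor1-dv1} and \ref{lem-cor1-dv2}. Finally, note the paper does not argue via touching points of $u_\e-\phi_\e$ but inserts the corrected quadratic $\widetilde{Q}_k=Q_k+\e_k v_k+\e_k^2\widehat w_k$ as a subsolution of the $\e$-problem in a small ball and applies the $\e$-level comparison principle; that difference is one of presentation, but the missing first corrector and the resulting wrong effective operator are substantive gaps that would have to be repaired before the rest of your outline can go through.
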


We will use the condition (3) to prove the uniformly ellipticity of $\overline{L}$. Hence it can be dropped if $a_{ij} = I$(Laplace case) and $b^i(y) = \nu^i$(Neumann boundary case). See chapter \ref{sec-cor2 and ellipticity}. And the condition (2) can be dropped if we can construct a barrier at any boundary points. We will show that such a barrier exists if $\Omega$ satisfies exterior sphere condition in chapter \ref{sec-Homogenization}.
\begin{cor} \label{cor-main-1}
Let $u_\e$ be the solution of the equation \eqref{eqn-main-1}. Assume that \eqref{eqn-main-1} satisfies all of the conditions (1) -(6), condition (3) in theorem \ref{thm-main-1} and the compatibility condition. Assume also that $\Omega$ satisfies an exterior sphere condition. then $u_\e$ converges uniformly to $u_0$ which is the solution of \eqref{eqn-rst-1}.
\end{cor}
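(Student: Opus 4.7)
The corollary is a hypothesis-reduction for Theorem \ref{thm-main-1}: only condition (2) of that theorem (uniform $L^{\infty}$ bounds on $u_\e$, and $u^{*}=u_{*}=\vp$ on $\partial\Omega$) is not already assumed, so the entire plan is to derive (2) from the exterior sphere condition together with Conditions I and the compatibility condition. Once (2) is established, the corollary follows by direct invocation of Theorem \ref{thm-main-1}.

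First, I would establish the uniform $L^\infty$ bound. Since $\vp\in\mathcal{C}^{2}(\overline\Omega)$ is bounded, $c(0,x,y)=0$ with $c$ nonincreasing in $r$ (Condition I\eqref{con-c-decr}), and $f$ is bounded, the standard nondivergence maximum/ABP argument for mixed Dirichlet--Neumann problems gives a comparison with the constant sub/supersolutions $\pm M$ for $M$ large enough depending only on $\|\vp\|_\infty$, $\|f\|_\infty$ and the ellipticity constants. The oblique condition (Condition I\eqref{con-oblique}) makes this comparison robust against the Neumann faces $\partial T_\e$, and the bound obtained is independent of $\e$.

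The real work is constructing, at every $x_{0}\in\partial\Omega$, one-sided barriers $W_{\e}^{\pm}$ satisfying $W_\e^-\le u_\e\le W_\e^+$ with $W_\e^\pm(x_0)\to\vp(x_0)$ uniformly in $\e$. Using the exterior sphere condition, pick a ball $B_{r}(y_{0})\subset\R^{n}\setminus\Omega$ tangent to $\partial\Omega$ at $x_{0}$ and consider the classical Miller/Hopf barrier $W(x)=\vp(x_{0})+\delta+A\bigl(r^{-p}-|x-y_{0}|^{-p}\bigr)$, which, for $p$ large and $A$ large relative to $\delta$, is a strict supersolution of the \emph{frozen} operator $a_{ij}(\tfrac{x}{\e})D_{ij}+c$ and dominates $\vp$ on $\partial\Omega$. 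To make $W$ compatible with the Neumann condition on $\partial T_\e$ I would add an $\e$-order corrector: let $v_{\xi}$ be the periodic first corrector associated with the slope $\xi=DW(x)$ whose existence is guaranteed by Theorem \ref{thm-main-c} once the compatibility condition forces $\gamma(\xi)\equiv 0$ and yields a bounded periodic solution of the cell problem. Set $W_\e^+(x)=W(x)+\e\, v_{DW(x)}\!\bigl(\tfrac{x}{\e}\bigr)$. Because $v_\xi$ is bounded uniformly in $\xi$ on compact sets, the added term is $O(\e)$; its gradient contributes precisely the piece needed to cancel $b^{i}(\tfrac{x}{\e})D_{i}W(x)$ on $\partial T_\e$, so $W_\e^+$ is a supersolution of \eqref{eqn-main-1} near $x_0$, and by the strictness built into $W$ the $O(\e)$ perturbation does not spoil the inequality in the interior. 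The subbarrier $W_\e^-$ is constructed symmetrically.

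Applying the comparison principle for \eqref{eqn-main-1} between $W_\e^\pm$ and $u_\e$ on a neighborhood $\Omega\cap B_\rho(x_0)$ (on the remaining portion of $\partial(\Omega\cap B_\rho(x_0))$ we use the uniform $L^\infty$ bound from Step 1 and enlarge $A$) yields $|u_\e(x)-\vp(x_0)|\le\omega(|x-x_0|)+C\e$ for a modulus $\omega$ independent of $\e$. Passing to upper and lower half-relaxed limits gives $u^{*}(x_{0})=u_{*}(x_{0})=\vp(x_{0})$ at every $x_{0}\in\partial\Omega$. Together with the uniform $L^\infty$ bound, this verifies hypothesis (2) of Theorem \ref{thm-main-1}, and the corollary is immediate. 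The main obstacle is the barrier construction: one must exhibit a single modification of the classical Hopf barrier that is simultaneously a supersolution of the highly oscillatory operator and compatible with the oscillatory oblique condition on the holes, and it is exactly here that the compatibility condition (ensuring the first corrector is bounded periodic rather than only of linear growth) is indispensable.
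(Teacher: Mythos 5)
Your overall strategy (reduce to hypothesis (2) of Theorem \ref{thm-main-1} by producing boundary barriers from the exterior sphere condition) is the right frame, but the key step --- that $W_\e^{+}=W+\e\,v_{DW(x)}(\tfrac{x}{\e})$ is a supersolution of \eqref{eqn-main-1} up to errors that the strictness of the Hopf barrier absorbs --- has a genuine gap. Writing $v(y;\xi)=V(y)\cdot\xi$ and differentiating $\e V(\tfrac{x}{\e})\cdot DW(x)$ twice, the $\tfrac1\e D_y^2V$ term is killed by the cell equation, but the cross term
\begin{equation*}
a_{ij}\left(\tfrac{x}{\e}\right)\left[\,D^2W\,D_yV+(D_yV)^{t}D^2W\,\right]_{ij}=a_{ij}\,Z_{ij}\!\left(\tfrac{x}{\e},D^2W\right)
\end{equation*}
is \emph{not} $O(\e)$: by Lemmas \ref{lem-cor1-dv1}--\ref{lem-cor1-dv2} it is only bounded by $C_1\|D^2W\|$ near $\partial T_\e$, with $C_1$ a fixed constant of order one. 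It cannot be absorbed by the strict negativity of the Miller/Hopf barrier, because for $W=\vp(x_0)+\delta+A(r^{-p}-|x-y_0|^{-p})$ the strictness $-a_{ij}D_{ij}W$ and the Hessian norm $\|D^2W\|$ scale together (their ratio even deteriorates as $p$ grows), so the $O(1)$ term $C\|D^2W\|$ generically destroys the supersolution inequality in the cells adjacent to the holes. This is exactly why the paper's barriers (Lemma \ref{lem-bar}) carry the \emph{second} corrector $\e^2\widehat{w}_\e$ as well, so that the leftover interior terms assemble into $\e^2 w_\e\approx\overline{L}(\cdot)$, and why the uniform ellipticity of $\overline{L}$ (Theorem \ref{thm-cor2-elliptic}, hence the hypothesis $\mathfrak a\le\mathfrak a_0$) is then used, via a large parameter $K$, to make that quantity negative; the same mechanism gives the boundary inequality a definite sign, whereas in your construction the boundary cancellation $b\cdot(\xi+D_yv)=0$ is exact only for frozen $\xi$, and the $O(\e)$ terms coming from $\xi=DW(x)$ varying have uncontrolled sign with no strict margin to absorb them. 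A secondary but related gap is your Step 1: constants $\pm M$ are not super/subsolutions when $f$ changes sign, and an ABP-type bound uniform in $\e$ for the oblique condition on $\partial T_\e$ is not available off the shelf; in the paper the uniform $L^\infty$ bound is itself a byproduct of the corrector-based barriers.

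Note also that even after repairing the interior inequality with the second corrector, the direct Hopf-barrier construction is what the paper carries out only for \emph{convex} domains (where the barrier can be taken of the global form $D\vp(0)\cdot x+\tfrac12|x|^2+\tfrac K2(R^2-|x^n-R|^2)$); for a mere exterior sphere condition the paper proceeds indirectly: it builds a barrier $h$ for the \emph{homogenized} operator $\widetilde L$ at $x_0$ (Lemma \ref{lem-bar-h}), uses the inverse-homogenization Lemmas \ref{lem-bar-inv} and \ref{lem-bar-g} to manufacture a source $g\le0$ with $\overline{f+g}=\widetilde L(D^2h,h,x)$, solves the corresponding $\e$-problems on a large convex ball $\mathcal O$ where the convex-case theorem already applies, and then compares $u_\e$ with $v_\e+\tau(\e)$. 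Your proposal would need either this detour or a quantitative smallness of $\|D_yV\|_\infty$ that the hypotheses do not provide.
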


\begin{cor} \label{cor-main-2}
Let $u_\e$ be the solution of the equation \eqref{eqn-main-0} with the condition \eqref{con-conti}. Then $u_\e$ converges uniformly to $u_0$ which is the solution of \eqref{eqn-rst-0} if the domain $\Omega$ satisfies an exterior sphere condition.
\end{cor}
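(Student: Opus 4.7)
The plan is to recognize Corollary \ref{cor-main-2} as a direct specialization of Corollary \ref{cor-main-1} and to check every hypothesis for the Laplace--Neumann instance. Set $a_{ij}(y) = \delta_{ij}$, $c(r,x,y) \equiv 0$, $f(x,y) = f(x)$, and $b^i(y) = \nu^i(y)$, where $\nu$ is the outward unit normal on $\partial T^{\mathfrak a}$; then \eqref{eqn-main-0} is an instance of \eqref{eqn-main-1}. Conditions I are immediate in this setting: uniform ellipticity holds with $\lambda = \Lambda = 1$; the oblique condition reduces to $\nu\cdot\nu = 1$; the $y$-periodicity is built into the definition of $T^{\mathfrak a}$; the hypotheses on $c$ are vacuous since $c \equiv 0$; the $C^\alpha$ and $C^{1,\alpha}$ bounds are trivial for constant $a_{ij}$ and for the smooth spherical normal; and the continuity assumption \eqref{con-conti} is exactly what the corollary stipulates.

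Next I would invoke the remark following Theorem \ref{thm-main-c} to supply the compatibility condition: for $f = g = 0$, $a_{ij} = \delta_{ij}$, and $b^i = \nu^i$, the associated cell problem admits a bounded periodic solution precisely when $\gamma = 0$, uniformly in $\xi$ and in the hole size $\mathfrak{a}$. Heuristically this is the divergence identity obtained by integrating $\triangle v = 0$ against $1$ on the punctured fundamental cell, but rigorously it is the content of Section \ref{sec-com}, which we simply cite. The remark following Theorem \ref{thm-main-1} then permits us to drop its condition (3) in the Laplace--Neumann case, so no smallness restriction on $\mathfrak{a}$ is required.

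Finally, the exterior sphere assumption on $\Omega$ furnishes a barrier at every boundary point, forcing $u^* = u_* = \vp$ on $\partial\Omega$ via the Perron-type argument carried out in Section \ref{sec-Homogenization}; this removes hypothesis (2) of Theorem \ref{thm-main-1}. All hypotheses of Corollary \ref{cor-main-1} are now in force, and applying it yields uniform convergence of $u_\e$ to the viscosity solution $u$ of \eqref{eqn-rst-1}. Because $a_{ij}$, $c$, and $f$ carry no nontrivial $y$-dependence in \eqref{eqn-main-0}, the construction of the homogenized operator collapses to the linear equation \eqref{eqn-rst-0}, with $\overline{a}_{ij}$ the constant matrix arising from the first corrector and the factor $\theta$ produced by averaging over the fundamental cell. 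Uniqueness of viscosity solutions to this uniformly elliptic Dirichlet problem then identifies the limit with $u_0$. The only nonroutine input is the compatibility identity for $\triangle$ with Neumann data, which has been prepared separately; all remaining steps are bookkeeping verifications of Conditions I.
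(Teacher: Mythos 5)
Your proposal is correct and follows essentially the same route as the paper, whose proof of Corollary \ref{cor-main-2} is simply to invoke Theorem \ref{thm-main-1} together with the exterior-sphere barrier lemma of Section \ref{sec-Homogenization}, with the compatibility condition supplied by the divergence-theorem example in Section \ref{sec-com} and the hole-size restriction dropped via the explicit formula for $\overline{L}$ in the Laplace--Neumann case. Your additional bookkeeping (verifying Conditions I and identifying $\overline{L}=0$ with \eqref{eqn-rst-0}) merely makes explicit what the paper leaves implicit in that one-line argument.
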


Finally, we develop the following estimate. It tells us that $u_\e$ is almost Lipschitz continuous.

\section{Existence and Regularity}\label{sec-exist}
\subsection{}
We begin by recalling the definition of viscosity solutions. It is a definition of viscosity solution defined in \cite{CIL}.
\begin{definition}
For a given function $u$ defined on $\overline{\Omega}$,
\begin{enumerate}
\item
 the superdifferential $D^{2,+}u(x)$ of order 2 at $x \in \overline{\Omega}$ is defined by
\begin{equation*} \begin{aligned}
D^{2,+}u(x) = \{(p,M) \in \R^n \times \mathbb{S}^n : u(x+h) \le u(x) + <p,h> \qquad\\
 + \frac{1}{2}\langle Mh,h\rangle+ o(|h|^2) \text{ as } x+h \in \overline{\Omega} \text{ and } h \ra 0  \}
\end{aligned} \end{equation*}
\item
 the subdifferential $D^{2,-}u(x)$ of order 2 at $x \in \overline{\Omega}$ is defined by
\begin{equation*} \begin{aligned}
D^{2,-}u(x) = \{(p,M) \in \R^n \times \mathbb{S}^n : u(x+h) \ge u(x) + <p,h> \qquad\\
 + \frac{1}{2}<Mh,h> + o(|h|^2) \text{ as } x+h \in \overline{\Omega} \text{ and } h \ra 0  \}
\end{aligned} \end{equation*}
\item
$\overline{D}^{2,+}u(x)$ is the set of those points $(r,p,M) \in \R \times \R^n \times \mathbb{S}^n$ to which there corresponds a sequence $\{(x_n,p_n,M_n)\} \in \overline{\Omega} \times \R^n \times \mathbb{S}^n$ such that $(p_n,M_n) \in D^{2,+}u(x_n)$ for $n \in \mathbb{N}$ and such that $x_n \ra x$, $u(x_n) \ra r$, $p_n \ra p$, and $M_n \ra M$ as $n \ra \infty$.
\item
$\overline{D}^{2,-}u(x)$ is the set of those points $(r,p,M) \in \R \times \R^n \times \mathbb{S}^n$ to which there corresponds a sequence $\{(x_n,p_n,M_n)\} \in \overline{\Omega} \times \R^n \times \mathbb{S}^n$ such that $(p_n,M_n) \in D^{2,-}u(x_n)$ for $n \in \mathbb{N}$ and such that $x_n \ra x$, $u(x_n) \ra r$, $p_n \ra p$, and $M_n \ra M$ as $n \ra \infty$.
\end{enumerate}
\end{definition}

\begin{definition} \label{def-viscositysol}
For a given nonlinear equation that is defined in bounded domain $\Omega$,
\begin{equation}\label{eqn-nonlinear} \begin{cases}
F(D^2u,Du,u,x) = f(x) &\text{ in } \Omega \\
G(Du,u,x) = g(x) &\text{ on } \partial \Omega
\end{cases} \end{equation}
\begin{enumerate}
\item
an upper semi-continuous $u$ is a viscosity sub-solution of \eqref{eqn-nonlinear} if
\begin{equation*} \begin{cases}
F(M,p,u(x),x) \ge 0 \qquad \text{ for } x \in \Omega, (p,M) \in \overline{D}^{2,+}u(x) \\
F(M,p,u(x),x) \ge 0 \text{ or } G(p,u(x),x) \le 0 \quad \text{ for } x \in \partial \Omega, (p,M) \in \overline{D}^{2,+}u(x). \\
\end{cases} \end{equation*}

\item
an lower semi-continuous $u$ is a viscosity super-solution of \eqref{eqn-nonlinear} if
\begin{equation*} \begin{cases}
F(M,p,u(x),x) \le 0 \qquad \text{ for } x \in \Omega, (p,M) \in \overline{D}^{2,-}u(x) \\
F(M,p,u(x),x) \le 0 \text{ or } G(p,u(x),x) \ge 0 \quad \text{ for } x \in \partial \Omega, (p,M) \in \overline{D}^{2,-}u(x) \\
\end{cases} \end{equation*}

\item
$u$ is called a viscosity solution of \eqref{eqn-nonlinear} if $u$ is called a viscosity super- and sub-solution of \eqref{eqn-nonlinear}.
\end{enumerate} \end{definition}

We employ the comparison principle given at \cite{CIL}.
\begin{lem}[Comparison Principle] \item
Let $v^+_e$ and $v^-_\e$  be  viscosity super- and sub-solutions of \eqref{eqn-main-1} respectively for given $\e <1$. Then $v^+_\e(x)\geq v^-_\e (x)$ in $\Omega$.
\end{lem}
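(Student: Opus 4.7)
The statement is a standard comparison theorem for viscosity solutions of uniformly elliptic equations with mixed Dirichlet/oblique boundary data, and my plan is simply to verify that Conditions~I place $(P_\e)$ within the scope of the general framework of \cite{CIL} and then run the familiar doubling-of-variables argument adapted to the present boundary datum.

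I would argue by contradiction, assuming $M := \sup_{\overline{\Omega}_\e}(v^-_\e - v^+_\e) > 0$. Since on the Dirichlet portion $\partial\Omega \setminus \partial T_\e$ the viscosity boundary definition forces $v^-_\e \le \varphi \le v^+_\e$ pointwise, the positive maximum must be attained at a point $x_0$ lying in $\Omega_\e$ or on $\partial T_\e$. As a preliminary step I would replace $v^-_\e$ by the strict sub-solution $v^-_\e - \eta$ (and let $\eta \to 0^+$ at the end of the argument); this device is needed because Condition~(\ref{con-c-decr}) only gives non-strict monotonicity of $c$ in $r$, which by itself does not close the final contradiction.

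Next, I would introduce the penalty
\begin{equation*}
\Phi_\alpha(x,y) \;=\; v^-_\e(x) - v^+_\e(y) - \tfrac{\alpha}{2}|x-y|^2 - \beta\psi(x,y),
\end{equation*}
where $\beta>0$ is small and $\psi$ is chosen, as in \cite{CIL}, so that, near $\partial T_\e$, $b(\cdot/\e)\cdot D_x\psi \ge \mu/2$ and $b(\cdot/\e)\cdot D_y\psi \le -\mu/2$; such a $\psi$ exists thanks to the uniform obliqueness condition~(\ref{con-oblique}). At a maximizer $(x_\alpha,y_\alpha)$ one has $\alpha|x_\alpha-y_\alpha|^2 \to 0$ and $x_\alpha,y_\alpha \to x_0$. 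Ishii's lemma then delivers $(p^+_\alpha,X_\alpha) \in \overline{D}^{2,+} v^-_\e(x_\alpha)$ and $(p^-_\alpha,Y_\alpha) \in \overline{D}^{2,-} v^+_\e(y_\alpha)$ with $X_\alpha \le Y_\alpha + o_\alpha(1)$ and $p^\pm_\alpha = \alpha(x_\alpha - y_\alpha) \pm \beta D\psi$. By construction of $\psi$ the boundary term $G(p^\pm_\alpha, \cdot/\e)$ has the \emph{wrong} sign at any limit point on $\partial T_\e$, so the viscosity definition forces the bulk PDE inequality at both $x_\alpha$ and $y_\alpha$.

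Subtracting the resulting inequalities, uniform ellipticity~(\ref{con-unif-elliptic}) and the matrix inequality $X_\alpha \le Y_\alpha + o_\alpha(1)$ control the second-order contribution, the uniform continuity of $a_{ij}$, $c$, $f$ in $y$ (periodicity plus Condition~(\ref{con-alpha}),(\ref{con-conti})) handles the coefficient mismatch, and the strict monotonicity gained from the $\eta$-perturbation produces the contradiction after sending $\alpha\to\infty$ and then $\beta,\eta\to 0^+$. The main obstacle is constructing the oblique penalty $\psi$ so that it simultaneously pushes the maximizer away from $\partial T_\e$ in both arguments without introducing matrix errors beyond $O(\beta)$; once the standard penalty of \cite{CIL} is transplanted to the $\e$-scaled geometry (which works since $\mu$ is uniform in $\e$), the remainder of the argument is routine.
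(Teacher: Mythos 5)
The paper does not actually prove this lemma; it simply invokes \cite{CIL} (with \cite{LT} in the background for the oblique boundary condition), so your proposal has to be judged on its own terms, and as written it contains two genuine gaps. First, the strictness device fails. You replace $v^-_\e$ by $v^-_\e-\eta$ to ``gain'' strict monotonicity, but condition \eqref{con-c-decr} only makes $c$ non-increasing in $r$ and allows $c\equiv 0$; in that case $v^-_\e-\eta$ satisfies exactly the same non-strict interior inequality as $v^-_\e$, and the boundary operator $G(p,y)=b^i(y)p_i$ does not see constants at all, so the shift produces no slack either in the bulk or on $\partial T_\e$, and the contradiction you plan to extract at the end never materializes. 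The usual repair is a non-constant perturbation whose strictness comes from uniform ellipticity and which is simultaneously admissible for the oblique condition on $\partial T_\e$ (a barrier of the kind the paper constructs in Lemma \ref{lem-precom-bdd}, with $a_{ij}D_{ij}\chi\le-\delta$ in the cell and a controlled sign of $b\cdot D\chi$ on the spheres), or else comparison routed through the classical solution of the mixed oblique/Dirichlet problem furnished by \cite{LT}.

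Second, the claim that ``uniform continuity of $a_{ij}$ handles the coefficient mismatch'' is not correct in the doubling scheme you set up. After Ishii's lemma the second-order contribution is estimated by $\operatorname{tr}\bigl(a(x_\alpha/\e)X_\alpha\bigr)-\operatorname{tr}\bigl(a(y_\alpha/\e)Y_\alpha\bigr)\le C\,\alpha\,\|\sigma(x_\alpha/\e)-\sigma(y_\alpha/\e)\|^2$ with $a=\sigma\sigma^t$, and this is $o(1)$ only when $\sigma$ is Lipschitz; condition \eqref{con-alpha} gives H\"older regularity only, so the bound is of size $\alpha\,\omega(|x_\alpha-y_\alpha|)^2$ with $\omega$ a H\"older modulus, which need not vanish even though $\alpha|x_\alpha-y_\alpha|^2\to 0$. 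Hence the structure condition (3.14) of \cite{CIL} that your plan implicitly relies on is unavailable, and one must exploit the linear structure differently (e.g.\ sup/inf-convolutions together with the equivalence of viscosity and strong solutions for continuous coefficients, or again comparison through a classical solution). Two smaller glossed points: the assertion that $G(p^{\pm}_\alpha,\cdot/\e)$ has the wrong sign at boundary maximizers does not follow from the construction of $\psi$ alone, since $p^{\pm}_\alpha$ contains $\alpha(x_\alpha-y_\alpha)$ whose pairing with $b$ must first be controlled through the exterior-ball geometry of $\partial T_\e$ (namely $\nu(x_\alpha)\cdot(x_\alpha-y_\alpha)\ge -C|x_\alpha-y_\alpha|^2$) before the $\beta\psi$ term can dominate; and the pointwise ordering $v^-_\e\le\varphi\le v^+_\e$ on $\partial\Omega\setminus\partial T_\e$ is not automatic under Definition \ref{def-viscositysol} with the generalized boundary condition, but needs the Dirichlet data to be attained in the strong sense or a barrier argument.
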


The following existence theorem for the viscosity solution can be found at  \cite{CIL}.
\begin{lem}[Existence]
\item There is a unique viscosity solution $u_{\e}$ of \eqref{eqn-main-1}.
\end{lem}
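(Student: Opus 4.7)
The plan is to combine the comparison principle already recorded above with Perron's method in the viscosity formulation of \cite{CIL}. Uniqueness is immediate: if $u_\e^1$ and $u_\e^2$ were two viscosity solutions, each would be both a sub- and super-solution of \eqref{eqn-main-1}, so the comparison principle applied twice yields $u_\e^1 \le u_\e^2 \le u_\e^1$. The content of the statement is therefore the existence half.

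The first concrete step is to exhibit a classical subsolution $w^-$ and supersolution $w^+$ with $w^- \le w^+$ that agree with $\vp$ on $\partial \Omega \setminus \partial T_\e$ and satisfy the oblique boundary inequality on $\partial T_\e$. A natural ansatz is $w^\pm(x) = \vp(x) \pm K\bigl(A - |x|^2\bigr)$ with $A$ chosen so that the bracket is positive on $\overline\Omega$ and $K$ large. In the interior, uniform ellipticity of $(a_{ij})$ together with boundedness of $f$ and the monotonicity and normalization of $c$ (conditions (\ref{con-unif-elliptic}), (\ref{con-c-decr}) and the $L^\infty$-bound from (\ref{con-alpha})) forces the correct sign of $L(D^2 w^\pm, w^\pm, x, x/\e) - f$ once $K$ is large. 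On $\partial T_\e$ the uniform obliqueness $b\cdot \nu \ge \mu > 0$ from condition (\ref{con-oblique}) lets the $\pm 2Kx$ contribution of the correction dominate the bounded gradient term $b^i D_i \vp$, giving $G(Dw^-, x/\e) \le 0 \le G(Dw^+, x/\e)$ on $\partial T_\e$ after choosing $K$ larger still.

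With these global barriers in hand I would define
\[
u_\e(x) \;=\; \sup\bigl\{\, w(x) : w \text{ is a viscosity subsolution of } \eqref{eqn-main-1} \text{ with } w^- \le w \le w^+ \,\bigr\}.
\]
The standard Ishii--Perron argument from \cite{CIL} shows that the upper semicontinuous envelope $(u_\e)^*$ is a viscosity subsolution and the lower semicontinuous envelope $(u_\e)_*$ is a viscosity supersolution of \eqref{eqn-main-1}; the barriers force $(u_\e)_* = \vp = (u_\e)^*$ on $\partial \Omega \setminus \partial T_\e$. Applying the comparison principle to $(u_\e)^*$ and $(u_\e)_*$ gives $(u_\e)^* \le (u_\e)_*$, so the two envelopes coincide, $u_\e$ is continuous, and it is the desired viscosity solution.

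The main obstacle is not Perron's abstract bumping procedure itself, which is purely formal once barriers exist, but verifying that the construction is compatible with the nonlinear Neumann-type condition on the inclusion boundary $\partial T_\e$. This is precisely where the uniform obliqueness $b(y)\cdot \nu \ge \mu > 0$ in condition (\ref{con-oblique}) and the $C^{1,\alpha}$ regularity of $b^i$ in condition (\ref{con-alpha}) are used: they guarantee that the Ishii doubling-variable argument at a boundary point of $\partial T_\e$ produces the required sign in $G$, so that the Perron envelope genuinely satisfies the boundary condition in the viscosity sense and the comparison principle applies up to $\partial T_\e$. Once this is in place, uniqueness and continuity follow as above.
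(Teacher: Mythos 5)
The paper gives no proof of this lemma at all (it is quoted from \cite{CIL}), and your overall strategy --- uniqueness from the comparison lemma, existence by Perron's method squeezed between ordered barriers --- is exactly the standard argument that citation points to. The problem is in your concrete barrier step. With $w^{\pm}(x)=\vp(x)\pm K\bigl(A-|x|^{2}\bigr)$ you have $Dw^{\pm}=D\vp\mp 2Kx$, and you claim that the uniform obliqueness $b\cdot\nu\ge\mu$ lets the term $\mp 2K\,b\cdot x$ dominate with the right sign on $\partial T_\e$. This is false: the normal $\nu$ on $\partial T_\e$ is radial about the hole centers $\e m$, $m\in\Z^{n}$, not about the origin. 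At a point $x=\e m+\e{\mathfrak a}\,\omega$ one has $b\cdot x=\e\, b\cdot m+\e{\mathfrak a}\, b\cdot\omega$, whose first term has no sign and can be of size $\operatorname{diam}(\Omega)$, so $G(Dw^{\pm},x/\e)$ has no definite sign however large $K$ is. This is precisely why the paper's own supersolution $P_\e$ in Lemma \ref{lem-bar} must carry the oscillating corrector term $\e V(x/\e)\xi(x)$: a purely macroscopic quadratic cannot satisfy the oscillating oblique condition on $\partial T_\e$. Your construction can be repaired without correctors only by invoking the disjunctive boundary condition in Definition \ref{def-viscositysol}: a smooth function satisfying the interior inequality up to $\overline{\Omega_\e}$ is automatically a viscosity super- (sub-) solution of the full problem \eqref{eqn-main-1}, so no sign condition for $G$ on $\partial T_\e$ is needed --- but then the inequality you assert should be deleted, not used.

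A second gap: your barriers do not attain $\vp$ on $\partial\Omega\setminus\partial T_\e$ (you chose $A-|x|^{2}>0$ on $\overline\Omega$, so $w^{-}<\vp<w^{+}$ there), hence the claim that they ``force $(u_\e)_{*}=\vp=(u_\e)^{*}$ on $\partial\Omega$'' does not follow. With only these global barriers, Perron's method produces a solution in which the Dirichlet condition, too, holds merely in the generalized viscosity sense; to show the boundary data is actually attained you need, at each $x_{0}\in\partial\Omega\setminus\partial T_\e$, local barriers equal to $\vp(x_{0})$ at $x_{0}$, built from the smoothness (exterior sphere property) of $\partial\Omega$ and uniform ellipticity, again handling the nearby holes either by a corrector term or by the disjunctive interpretation of the oblique condition. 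If instead you are content with the generalized boundary condition, the statement still follows from the comparison lemma, but then the sentence identifying the envelopes with $\vp$ on $\partial\Omega$ must be removed or separately justified.
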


We finish this section by introducing more intuitive concept of viscosity solution that is equivalent the definition above if our solution $u$ is in $\mathcal{C}^1$ near the boundary:

\begin{definition}
Let $x \in \Omega$ for some bounded $\Omega$. Then a continuous function $u$ is the viscosity super(sub)-solution of the equation \eqref{eqn-nonlinear} at $x$ if $\phi \in \mathcal{C}^2(\Omega)$ touches $u$ by below at $x$, then
\begin{equation*}
F(D^2\phi(x),D\phi(x),\phi(x),x) \le(\ge) f(x).
\end{equation*}
\end{definition}

\begin{lem}
Suppose that $u$ is a continuous viscosity super-solution(sub-solution) of \eqref{eqn-nonlinear} at all interior point of $\Omega$. Suppose also that $u$ is $\mathcal{C}^1$ near the $\partial \Omega$. Then $u$ is a viscosity super-solution(sub-solution) of \eqref{eqn-nonlinear} if
\begin{equation*}
G(Du,u,x) \ge(\le) 0.
\end{equation*}
\end{lem}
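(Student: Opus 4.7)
The plan is to reduce the semi-jet formulation of Definition~\ref{def-viscositysol} to the test-function formulation at interior points (which is classical), and then to upgrade separately to the full CIL notion at boundary points using the $C^1$ regularity together with the oblique structure of $G$ from Conditions~I.

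First, at an interior point $x_0\in\Omega$ and $(p,M)\in D^{2,-}u(x_0)$, I would build a $C^2$ test function that touches $u$ from below at $x_0$. Taking
\begin{equation*}
\phi(x) = u(x_0) + p\cdot(x-x_0) + \tfrac{1}{2}\langle M(x-x_0),\, x-x_0\rangle - \omega(|x-x_0|)\,|x-x_0|^2,
\end{equation*}
with $\omega$ a modulus of continuity extracted from the $o(|h|^2)$ remainder in the definition of $D^{2,-}u(x_0)$, produces $\phi\le u$ near $x_0$ with $\phi(x_0)=u(x_0)$, $D\phi(x_0)=p$, $D^2\phi(x_0)=M$. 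The intuitive hypothesis then gives $F(M,p,u(x_0),x_0)\le f(x_0)$. Extension to $(p,M)\in\overline{D}^{2,-}u(x_0)$ is immediate from the joint continuity of $F$.

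Second, at $x_0\in\partial\Omega$ and $(p,M)\in D^{2,-}u(x_0)$, I would exploit the $C^1$ expansion $u(x_0+h)=u(x_0)+Du(x_0)\cdot h+o(|h|)$ valid for $x_0+h\in\overline{\Omega}$. Comparing with the jet inequality $u(x_0+h)\ge u(x_0)+p\cdot h+\tfrac{1}{2}\langle Mh,h\rangle+o(|h|^2)$ yields $(Du(x_0)-p)\cdot h \ge o(|h|)$. Restricting to $h=t h_0$ with $t\to 0^+$ and $h_0$ in the inward half-space $\{h_0\cdot\nu(x_0)\le 0\}$ (admissible by smoothness of $\partial\Omega$), I conclude $(Du(x_0)-p)\cdot h_0\ge 0$ for every such $h_0$. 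Varying $h_0$ tangentially forces the tangential component of $Du(x_0)-p$ to vanish, and choosing $h_0=-\nu(x_0)$ fixes the sign of the normal component, so $p = Du(x_0) + \beta\,\nu(x_0)$ for some $\beta\ge 0$. Invoking the oblique condition $b\cdot\nu\ge\mu>0$,
\begin{equation*}
G(p,u(x_0),x_0) = b\cdot p = G(Du(x_0),u(x_0),x_0) + \beta\,(b\cdot\nu) \;\ge\; 0,
\end{equation*}
which is exactly the boundary clause of the CIL super-solution definition. Passing from $D^{2,-}$ to $\overline{D}^{2,-}$ is immediate by continuity of $b$ and of $Du$ near $\partial\Omega$. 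The sub-solution case is entirely symmetric: one obtains $p = Du(x_0) - \beta\,\nu(x_0)$ with $\beta\ge 0$, and obliqueness then gives $G(p)\le G(Du)\le 0$.

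The main obstacle is the boundary step: showing that any semi-jet gradient $p$ at $x_0\in\partial\Omega$ can differ from the classical gradient $Du(x_0)$ only by a nonnegative multiple of the outward normal, and recognizing that this is precisely the sign for which the oblique form of $G$ preserves the required inequality. The interior step is the standard test-function/semi-jet equivalence, and the sub-solution case follows by reversing the inequalities throughout.
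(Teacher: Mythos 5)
Your argument is sound, and in fact the paper states this lemma without any proof, so there is no in-paper argument to compare against; your route — the standard interior equivalence of semi-jets and $C^2$ test functions, plus the boundary observation that at $x_0\in\partial\Omega$ a $C^1$ function's subjet (superjet) gradient can differ from $Du(x_0)$ only by $+\beta\nu(x_0)$ (resp.\ $-\beta\nu(x_0)$) with $\beta\ge 0$, which the obliqueness $b\cdot\nu\ge\mu>0$ converts into the required sign of $G$ — is the natural argument and is exactly what makes the lemma true. Three small points to tighten: (i) the correction $\omega(|x-x_0|)\,|x-x_0|^2$ is not automatically $C^2$; one needs the usual doubly-integrated (smoothed) modulus so that the test function is genuinely $C^2$ with the prescribed jet at $x_0$ — this is the classical jet/test-function equivalence, so a citation or a line of construction suffices; (ii) for exactly tangential $h_0$ the segment $x_0+th_0$ need not lie in $\overline{\Omega}$, so derive $(Du(x_0)-p)\cdot h_0\ge 0$ for strictly inward directions and then pass to the limit $h_0\cdot\nu\to 0$ (or use admissible curves), which still kills the tangential component of $Du(x_0)-p$; (iii) in passing to $\overline{D}^{2,-}u(x_0)$ at a boundary point, the approximating points $x_n$ may be interior, in which case the limiting jet satisfies the $F$-inequality rather than the $G$-inequality — either way the disjunction in the CIL definition holds, so it is worth splitting into the two subsequences explicitly rather than appealing only to continuity of $b$ and $Du$.
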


\section{Compatibility Condition}\label{sec-com}
In this section, we are going to define the compatibility condition and investigate their properties.
\subsection{Existence and Regularity of Periodic Viscosity Solution}
Before introducing the compatibility condition, we are going to find the (periodic) viscosity solution of  the following equation defined on $\R^n \setminus T^{\mathfrak a}$:
\begin{equation} \label{eqn-precom} \begin{cases}
a_{ij}(y) D_{ij} v_\e(y) = f(y) &\text{ in } \R^n \setminus T^{\mathfrak a} \\
b^i(y) D_i v_\e(y) + \e^2 v_\e(y) = g(y) &\text{ on } \partial T^{\mathfrak a}.
\end{cases} \end{equation}
We assume that all the functions $a_{ij}$, $f$, $b^i$ and $g$ are periodic in $y$ variable. Assume also that $a_{ij}$ is uniformly elliptic with elliptic constant $\lambda$ and $\Lambda$ and $b^i(y) \nu^i \ge \mu$  for $\mu > 0$ in condition \eqref{con-oblique}.
Then, we will prove the comparison for the viscosity solution of \eqref{eqn-precom} .
\begin{lem}[Comparison Principle] \label{lem-precom-cp}
Let $v^+$ and $v^-$ be continuous, bounded and periodic viscosity super and sub-solution of the
equation \eqref{eqn-precom} respectively. Then we have
$$ v^+ \ge v^- .$$
\end{lem}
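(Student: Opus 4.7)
The plan is to argue by contradiction via the Crandall--Ishii--Lions doubling-of-variables method, with the decisive role played by the strict monotonicity in $v$ supplied by the $\e^2 v$ term in the boundary condition. By $y$-periodicity, both $v^\pm$ descend to continuous functions on the compact fundamental cell $Y := [-\tfrac12,\tfrac12]^n \setminus B_{\mathfrak a}(0)$, so $w := v^- - v^+$ attains its maximum $M$ on $Y$; I assume toward a contradiction that $M>0$.

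Introduce the standard penalization
\begin{equation*}
\Phi_\tau(x,y) := v^-(x) - v^+(y) - \frac{|x-y|^2}{2\tau}
\end{equation*}
on $\overline Y\times\overline Y$ with maximizer $(x_\tau, y_\tau)$. The classical penalization lemma gives $(x_\tau,y_\tau)\to(\hat y,\hat y)$ with $|x_\tau-y_\tau|^2/\tau\to 0$ and $w(\hat y)=M$, and Ishii's lemma yields $X_\tau, Y_\tau\in\mathbb{S}^n$ with $(p_\tau,X_\tau)\in\overline D^{2,+}v^-(x_\tau)$, $(p_\tau,Y_\tau)\in\overline D^{2,-}v^+(y_\tau)$, $p_\tau=(x_\tau-y_\tau)/\tau$, and $X_\tau\leq Y_\tau$.

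I first reduce to the case $\hat y\in\partial T^{\mathfrak a}$. If $\hat y$ were interior, the interior viscosity inequalities $a_{ij}(x_\tau)X_\tau^{ij}\geq f(x_\tau)$ and $a_{ij}(y_\tau)Y_\tau^{ij}\leq f(y_\tau)$ alone would not close the argument, since the interior operator has no zeroth-order term in $v$. Instead I invoke a Caffarelli--Cabr\'e strong maximum principle applied to $w$ (after a sup/inf-convolution smoothing to legitimize the viscosity subtraction): on the connected set $Y$, if the positive maximum is attained in the interior, then $w\equiv M$, so in particular $w=M$ at some point of $\partial T^{\mathfrak a}$, and I may take $\hat y\in\partial T^{\mathfrak a}$.

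To activate the boundary alternative in the disjunctive viscosity definition I further perturb the doubling by adding $-\delta\chi(x)-\delta\chi(y)$, where $\chi$ is a smooth cut-off of $-d(\cdot,\partial T^{\mathfrak a})$. Near $\partial T^{\mathfrak a}$ the gradient $D(-d)=\nu$ points into the hole, so $b^iD_i\chi\geq\mu>0$ by the uniform obliqueness of $b$, while $D^2 d$ is positive semidefinite with trace $(n-1)/|y|>0$, giving $a_{ij}D_{ij}\chi\leq-\lambda(n-1)/\mathfrak a<0$ by uniform ellipticity. The strict perturbation rules out the interior alternative at the perturbed maximizers, forcing
\begin{equation*}
b^i(x_\tau)(p_\tau)_i + \e^2 v^-(x_\tau)\leq g(x_\tau), \qquad b^i(y_\tau)(p_\tau)_i + \e^2 v^+(y_\tau)\geq g(y_\tau).
\end{equation*}
Subtracting and letting $\tau\to 0$, the term $(b^i(x_\tau)-b^i(y_\tau))(p_\tau)_i$ is $O(|x_\tau-y_\tau|^2/\tau)\to 0$ by the Lipschitz regularity of $b$, the right-hand side vanishes by continuity of $g$, and $v^-(x_\tau)-v^+(y_\tau)\to M$; letting $\delta\to 0$ yields $\e^2 M\leq 0$, contradicting $M>0$. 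The main obstacle is precisely the construction of $\chi$ and the simultaneous activation of both boundary alternatives: without the $\e^2 v$ term the problem would not be proper and comparison would fail, while without the uniform obliqueness $\chi$ could not satisfy both strict inequalities at once.
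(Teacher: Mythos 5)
Your reduction to a boundary touching point is fine (via Caffarelli--Cabr\'e, $w=v^--v^+$ lies in the class $\overline S$ so the strong maximum principle applies on the connected set $\R^n\setminus T^{\mathfrak a}$), and your endgame is right: if both boundary inequalities are activated at $x_\tau$ and $y_\tau$, subtracting them and using $b\cdot\nu\ge\mu$ gives $\e^2 M\le 0$. The gap is the single sentence ``the strict perturbation rules out the interior alternative at the perturbed maximizers.'' That is asserted, not proved, and as stated it is false. At a boundary maximizer the PDE alternative for the subsolution reads $a_{ij}(x_\tau)\bigl(X_\tau+\delta D^2\chi(x_\tau)\bigr)_{ij}\ge f(x_\tau)$, where Ishii's lemma controls $X_\tau$ only at the scale $O(1/\tau)$; a fixed correction $\delta\,a_{ij}D_{ij}\chi\le-c\delta$ is negligible against terms of size $1/\tau$ and cannot prevent this inequality from holding, so the disjunction in Definition \ref{def-viscositysol} is not resolved. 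And if one or both points do select the PDE alternative, no contradiction is available: the interior operator has no zeroth-order term (the $\e^2 v$ properness lives only on $\partial T^{\mathfrak a}$), and since $a_{ij}$ is only $C^\alpha$ the standard doubling estimate $a_{ij}(x_\tau)X_\tau^{ij}-a_{ij}(y_\tau)Y_\tau^{ij}\le o(1)$ (which needs a Lipschitz square root of $a$) is not even available. There is also a sign problem: with $\chi\approx-d(\cdot,\partial T^{\mathfrak a})$ the added term $-\delta\chi(x)-\delta\chi(y)=\delta\,(d(x)+d(y))$ rewards interior points, so the perturbed maximizers need not remain on $\partial T^{\mathfrak a}$ even though $\hat y$ does; at an interior perturbed maximizer no boundary inequality exists at all, while flipping the sign of $\chi$ to pin the maximizer to the boundary destroys the favorable $\pm\delta\mu$ terms you need. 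Resolving exactly this tension is what the Barles/Ishii test-function constructions for oblique derivative problems are designed for, and your proposal does not supply that ingredient.

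For comparison, the paper avoids doubling altogether: for $C^2$ sub- and supersolutions it shifts $v^+$ by the minimal constant $c_1>0$ until it touches $v^-$ from above; a touching point on $\partial T^{\mathfrak a}$ is impossible because the boundary condition then yields $0\ge\e^2 c_1>0$, an interior touching point is excluded by the maximum principle, and the general semicontinuous case is handled by the standard regularization arguments cited there. If you want to keep the doubling route you must either import the full oblique-boundary comparison machinery (Ishii, Barles; cf.\ \cite{LT}, \cite{CIL} Section 7) with test functions adapted to $b$ and to $\partial T^{\mathfrak a}$, or, as the paper does, first reduce to (semi)smooth functions where the classical touching argument applies.
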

\begin{proof}
First, assume that $v^+$ and $v^-$ are in $\mathcal{C}^2$.  If the conclusion is not true,  there exists $y_0 \in \R^n \setminus T^{\mathfrak a} $ such that
$v^+(y_0) < v^-(y_0)$. Now we add a positive constant $c>0$ so that
$v^+(y)+c > v^-(y)$ and then decrease  $ c$ until $v^+(y)+c $ touches $v^-(y)$.
Set $c_1=\min \{c>0:v^+(y)+c \ge v^-(y)\quad\text{ for all $y$}\}$. Then, from the assumption, $c_1>0$ and hence we can find $y_1 \in \overline{\R^n \setminus T^{\mathfrak a}}$ such that $v^+(y)+c_1$ touches $v^-(y)$ at $y_1$ from above .
In other words,
\begin{equation*}\begin{aligned}
v^+(y) + c_1 &\ge v^-(y) \quad\text{ for all $y\in \R^n \setminus T^{\mathfrak a} $}\\
v^+(y_1) + c_1 &= v^-(y_1).
\end{aligned}\end{equation*}

First let us consider the case when $y_1 \in \partial T^{\mathfrak a}$. Then, from $v^-(y_1)=v^+(y_1)+c_1$ and $v^-(y) \ge v^+(y)+c_1$, we have
\begin{equation*} \begin{aligned}
0&\ge b^i(y_1) D_iv^-(y_1) + \e^2 v^-(y_1) \\
&\ge b^i(y_1) D_iv^+(y_1) + \e^2 v^+(y_1) + \e^2 c_1 \\
&= \e^2 c_1 > 0,
\end{aligned} \end{equation*}
which  is a contradiction. Hence $y_1$ is not on the boundary.
Therefore  $y_1$ is supposed to be an interior point of $\R^n \setminus T^{\mathfrak a} $.

Hence, $y_0$ the touching point, should be in the interior of $\R^n \setminus T^{\mathfrak a}$. But, it also impossible because $v^+ + c_0$ also be a viscosity super solution of \eqref{eqn-precom} and the super solution $v^+ + c_0$ cannot touch the sub-solution $v^-$ by above at any interior point. So, $v^+ \ge v^-$ on $\R^n \setminus T^{\mathfrak a}$.

The case $v^+$ is lower semicontinuous and $v^-$ is upper-semicontinuous can be proved by the usual viscosity argument. See chapter 3 in \cite{CC}.
\end{proof}

From the comparison, we directly prove the existence of the solution of equation \eqref{eqn-precom}.
\begin{lem} \label{lem-precom-ex}
Suppose that there exist a periodic bounded continuous (viscosity) super-solution $h^+$ and a sub-solution $h^-$ of the equation of equation \eqref{eqn-precom}. Then, there exists the unique periodic viscosity solution $v_\e$ of \eqref{eqn-precom} located between $h^+$ and $h^-$.
\end{lem}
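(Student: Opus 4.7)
The plan is to prove Lemma~\ref{lem-precom-ex} by Perron's method in the viscosity sense, using the comparison principle of Lemma~\ref{lem-precom-cp} to collapse the upper and lower semicontinuous envelopes of the Perron function into a single continuous solution. The zeroth order term $\e^2 v$ in the oblique boundary condition plus the strict obliqueness $b^i\nu^i\ge\mu>0$ are precisely what make the standard Crandall--Ishii--Lions framework applicable to this Neumann-type problem.

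First I would reduce to a bounded geometry. Because all data and the class of competitors are $\Z^n$-periodic, it suffices to work on the periodic cell $Y=[0,1]^n\setminus\overline{B_{\mathfrak a}(m_0)}$ with periodic identification on the outer faces and the oblique condition on $\partial B_{\mathfrak a}(m_0)$. I would then define the Perron family
\begin{equation*}
\mathcal S=\bigl\{w:\R^n\setminus T^{\mathfrak a}\to\R \ \big|\ w \text{ periodic, USC, viscosity subsolution of \eqref{eqn-precom}, } h^-\le w\le h^+\bigr\}
\end{equation*}
and set $v_\e(y)=\sup_{w\in\mathcal S}w(y)$. The family is nonempty (it contains $h^-$) and $h^-\le v_\e\le h^+$ by construction; periodicity of $v_\e$ is immediate since $\mathcal S$ is invariant under integer shifts.

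Next I would verify the two standard Perron facts. The upper semicontinuous envelope $v_\e^*$ is again in $\mathcal S$: this is the routine closure-under-supremum statement for viscosity subsolutions, and in the interior it is unaffected by the boundary. At points of $\partial T^{\mathfrak a}$ one uses the disjunctive form in Definition~\ref{def-viscositysol}, which is preserved by sup-envelopes. For the lower semicontinuous envelope $(v_\e)_*$ one performs the classical bump argument: if $(v_\e)_*$ failed the supersolution condition at some $y_0$, one could find a smooth $\varphi$ touching from below at $y_0$ with $a_{ij}D_{ij}\varphi(y_0)>f(y_0)$ (interior case) or with both the PDE violated and the boundary condition satisfied strictly (boundary case), and then $\max(v_\e,\varphi+\delta)$ on a small neighborhood would give a strictly larger element of $\mathcal S$, contradicting the definition of $v_\e$. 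The boundary case is where care is needed; the term $\e^2\varphi$ together with the strict obliqueness $b^i\nu^i\ge\mu$ lets one perturb $\varphi$ in the conormal direction so that $b^i D_i(\varphi+\delta)+\e^2(\varphi+\delta)<g$ on a neighborhood of $y_0$ on $\partial T^{\mathfrak a}$, which is exactly what the bump construction requires.

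Finally I would invoke comparison. Since $v_\e^*$ is an USC subsolution and $(v_\e)_*$ is an LSC supersolution, both periodic and bounded, Lemma~\ref{lem-precom-cp} yields $v_\e^*\le (v_\e)_*$; the reverse inequality $(v_\e)_*\le v_\e^*$ is automatic. Therefore $v_\e=v_\e^*=(v_\e)_*$ is continuous and is the desired viscosity solution, uniqueness following from a second application of the comparison principle. The main obstacle is the oblique-boundary bump step: one must check that the modified test function still produces a valid competitor in $\mathcal S$ without violating the boundary relation, and this is precisely where strict obliqueness and the presence of the zeroth order term $\e^2 v$ become indispensable.
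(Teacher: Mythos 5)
Your proposal is correct and follows essentially the same route as the paper: the paper also applies Perron's method (taking the infimum over periodic bounded supersolutions rather than your supremum over subsolutions, a trivially dual choice), cites the standard Crandall--Ishii--Lions argument for the bump/envelope step, and gets periodicity from the shift-invariance of the admissible family, with Lemma \ref{lem-precom-cp} supplying uniqueness. Your write-up simply fills in the details (boundary bump via strict obliqueness and the $\e^2 v$ term, collapsing envelopes by comparison) that the paper leaves to the reference.
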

\begin{proof}
We first define $v = \inf\{h : h \text{ is periodic, bounded viscosity super-solutions }  \}$. Then, $v$ is well defined because of $h^+$ and $h^-$. And, we also prove that $v$ is a viscosity solution  by applying the argument in \cite{CIL}. Finally, from the definition,
\begin{equation*} \begin{aligned}
v(y) &= \inf \{ h(y) : h \text{ is peroicid, bounded viscosity solution of \eqref{eqn-precom} } \} \\
&= \inf \{ h(y+m) : h \text{ is peroicid, bounded viscosity solution of \eqref{eqn-precom} } \} \\
&= v(y+m)
\end{aligned} \end{equation*}
for all $m \in \Z^n$.
\end{proof}

\begin{lem}\label{lem-precom-bdd}
For each $\e>0$, there exists the solution $v_\e$ of the equation \eqref{eqn-precom} satisfying
\begin{equation*}
\| \e^2 v_\e \|_{L^\infty(\R^n \setminus T^{\mathfrak a})} \le C(\|f\|_{L^\infty(\R^n \setminus T^{\mathfrak a})} + \|g \|_{L^\infty(\R^n \setminus T^{\mathfrak a})})
\end{equation*}
for some constant $C=C(n,\Lambda,\lambda,\mu,\mathfrak{a})$. 
\end{lem}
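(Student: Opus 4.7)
The plan is to exhibit explicit $\Z^n$-periodic super- and sub-solutions of \eqref{eqn-precom} whose $L^\infty$ norms satisfy $\e^2 \|v^\pm\|_\infty \le C(\|f\|_\infty + \|g\|_\infty)$. Existence of the solution $v_\e$ will then follow from Lemma \ref{lem-precom-ex} applied to these barriers, and the two-sided bound from the comparison principle of Lemma \ref{lem-precom-cp}.

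The key auxiliary object is a periodic function $\phi$ designed to absorb the interior source $f$. Thinking of the problem as living on the bounded smooth domain $Y := \mathbb{T}^n \setminus \overline{B_{\mathfrak{a}}}$, I would solve the classical Dirichlet problem
$$
a_{ij}(y) D_{ij}\phi(y) = f(y) \text{ in } Y, \qquad \phi = 0 \text{ on } \partial B_{\mathfrak{a}},
$$
and extend periodically to obtain $\phi$ on $\R^n \setminus T^{\mathfrak{a}}$. Existence is classical, and the ABP estimate on $Y$ yields $\|\phi\|_\infty \le C_0\|f\|_\infty$. Because $\partial B_{\mathfrak{a}}$ is smooth and $a_{ij}\in C^\alpha$, standard global $C^{1,\alpha}$ estimates for non-divergence elliptic equations further give $\|D\phi\|_{L^\infty(\partial B_{\mathfrak{a}})} \le C_1 \|f\|_\infty$, with $C_0, C_1$ depending only on $n, \lambda, \Lambda, \alpha, \mathfrak{a}$.

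Set
$$
M := \|g\|_\infty + \|b\|_\infty \|D\phi\|_{L^\infty(\partial B_{\mathfrak{a}})} + \e^2 \|\phi\|_\infty,
$$
so that $M \le C(\|f\|_\infty + \|g\|_\infty)$ uniformly for $\e \in (0,1]$, and define $v^\pm := \phi \pm M/\e^2$. Both are bounded and $\Z^n$-periodic. In the interior, $a_{ij} D_{ij} v^\pm = f$ holds automatically; on $\partial T^{\mathfrak{a}}$ a pointwise estimate gives $b^i D_i v^+ + \e^2 v^+ = b^i D_i\phi + \e^2 \phi + M \ge \|g\|_\infty \ge g$, so $v^+$ is a super-solution, and the symmetric computation shows $v^-$ is a sub-solution. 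Lemma \ref{lem-precom-ex} then produces a periodic viscosity solution $v_\e$ with $v^- \le v_\e \le v^+$, whence $\e^2 \|v_\e\|_\infty \le M + \e^2 \|\phi\|_\infty \le C(\|f\|_\infty + \|g\|_\infty)$. The only genuinely delicate ingredient is the uniform-in-$f$ boundary gradient bound for $\phi$, which is precisely where the $C^{1,\alpha}$ theory enters; everything else is routine verification.
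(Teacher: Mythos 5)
Your proposal is correct, but it takes a genuinely different route from the paper. The paper splits $f$ and $g$ by linearity and builds a single explicit barrier: the radial function $\widetilde h(y)=\beta^{-1}\bigl(-|y|^{-\alpha}+\mathfrak a^{-\alpha}\bigr)$ with $\alpha$ large, periodized as $\min_{m\in\Z^n}\widetilde h(y-m)$ and shifted by $\delta/\e^2$, so that both the interior inequality and the oblique boundary inequality are checked by hand; this needs nothing beyond ellipticity and the obliqueness constant, and the constants are explicit. You instead absorb $f$ exactly by solving the auxiliary periodic Dirichlet problem $a_{ij}D_{ij}\phi=f$ on $\mathbb{T}^n\setminus \overline{B_{\mathfrak a}}$, $\phi=0$ on $\partial B_{\mathfrak a}$, and then shift by the constant $M/\e^2$ to dominate the boundary data; the verification of the sub/super-solution inequalities and the final application of Lemma \ref{lem-precom-ex} and the comparison principle are exactly as in the paper. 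What your route buys is a cleaner conceptual decomposition (one corrector for $f$, one constant for $g$) handled in a single stroke; what it costs is extra machinery: you need solvability of the periodic Dirichlet problem, a sup bound for $\phi$, and, crucially, the boundary gradient bound $\|D\phi\|_{L^\infty(\partial B_{\mathfrak a})}\le C\|f\|_\infty$, which relies on boundary $C^{1,\alpha}$ theory and hence on the H\"older continuity of $a_{ij}$ from Conditions I (condition \eqref{con-alpha}), whereas the paper's barrier argument is regularity-free. Two small points to tighten: the phrase ``ABP estimate on $Y$'' is not quite the right citation, since the standard ABP estimate requires Dirichlet data on the full Euclidean boundary of a bounded domain and the cell boundary $\partial Q$ carries none; the sup bound for $\phi$ is more safely obtained by comparison with exactly the paper's barrier $\widetilde h$ (or a maximum-principle argument on the torus). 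Also note that your constant then depends on $\alpha$ and the $C^\alpha$ data of $a_{ij}$, which is consistent with the lemma only because condition \eqref{con-alpha} bounds these by $\Lambda$; and to test the boundary inequality in the viscosity sense you should invoke the $C^1$-up-to-boundary regularity of $\phi$ together with the paper's lemma at the end of Section 2 reducing the viscosity boundary condition to the pointwise one.
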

\begin{proof}
For given $f$ and $g$, let $v^1$ be the solution of the equation \eqref{eqn-precom} when $g=0$ and $v^2$ be the solution of \eqref{eqn-precom} when $f=0$. Then, if we have $\|\e^2 v^1\| \le C(n,\Lambda,\lambda,\mu,\mathfrak{a})|f|_\infty $ and $\|\e^2 v^2\| \le C(n,\Lambda,\lambda,\mu,\mathfrak{a})|g|_\infty$, the conclusion comes from the linearity of the equation. So, we consider the case $g=0$ first. We may assume that $|f|_\infty = 1$ without losing generality.

Select a ball $B = B_\mathfrak{a}(0)$ which is a component of $T^{\mathfrak a}$ and a unit cube $Q$ of $\R^n$ with center 0. Set
\begin{equation*}
h=-\displaystyle\frac{1}{|x|^\alpha} + \frac{1}{\mathfrak{a}^\alpha} \ge 0.
\end{equation*}
By the direct calculation, we can obtain
\begin{equation*}
a_{ij}(y) D_{ij} h(y) \le \cM^+(D^2h(y)) \le -\alpha \displaystyle\frac{1}{|x|^{\alpha+2}}(\Lambda(\alpha-1) + \lambda(n-1)) \text{ in } \R^n \setminus B_\mathfrak{a}
\end{equation*}
where $\cM^+(M) = \sum_{e_i > 0} \Lambda e_i + \sum_{e_i \le 0} \lambda e_i$ and $e_i$ are eigenvalues of $M$. 

Select a large  $\alpha=\alpha(n,\lambda, \Lambda)$ so  that $\cM^+(D^2h(y)) \le 0$. and define 
$$\widetilde{h}(y) = \displaystyle\frac{1}{\beta} h(y).$$
where $\beta(\alpha) = -\alpha \displaystyle\frac{1}{\sqrt{n}^{\alpha+2}}(\Lambda(\alpha-1) + \lambda(n-1))$.

Then, we have
\begin{equation*}
a_{ij}(y) D_{ij} \widetilde{h}(y) \le -1 \text{ in } Q \setminus B_\mathfrak{a}.
\end{equation*}
From $D\widetilde{h}(y) = -\displaystyle\frac{\alpha}{\beta a^{\alpha +1}} \cdot \nu$ on the boundary $\partial  B_\mathfrak{a}$, we get
\begin{equation*}
b^i(y) D_i \widetilde{h} = b^i(y) \displaystyle\frac{\alpha}{\beta a^{\alpha +1}} \nu_i \ge  - \displaystyle\frac{\alpha}{\beta a^{\alpha +1}}.
\end{equation*}
Now we define
\begin{equation*}
\delta =  \displaystyle\frac{\alpha}{\beta a^{\alpha +1}} \text{ and,}
\end{equation*}
\begin{equation*}
v_+ = \min_{m \in \Z^n} ( \widetilde{h}(y-m) ) + \frac{1}{\e^2} \delta.
\end{equation*}
Then, because of the shape of $\widetilde{h}$, $a_{ij}(y) D_{ij} v^+(y) \le -1$ in the viscosity sense for all interior points of $\R^n\setminus T^{\mathfrak a}$ and, on the boundary, $G(Dv_+, y) + \e^2 v_+ \ge -\delta + \delta =0$ on $\partial T^{\mathfrak a}$.
Therefore $v_+$ is a periodic viscosity super-solution, and we can observe that $-v^+$ be a viscosity sub-solution. Hence, by the lemma \ref{lem-precom-ex}, there is a  periodic viscosity solution $v_{\e}$ of the equation \eqref{eqn-precom} between a sub-solution $-v^+$ and a super-solution $v_+>0$. In addition, we have
$$-\left( \e^2 \frac{1}{a^\alpha} + \delta \right) \le -\e^2 v_+ \le \e^2 v_\e \le \e^2 v_+ \le \e^2 \frac{1}{a^\alpha} + \delta $$
and
$$\| \e^2 v_\e \|_{L^\infty(\R^n \setminus T^{\mathfrak a})}\le C(n,\lambda, \Lambda, \mu, \mathfrak{a}).$$
For general $f$, consider the function $\displaystyle\frac{v}{\| f \|_{L^\infty(\R^n \setminus T^{\mathfrak a})}}$ and apply the above estimate, we can get the conclusion. And, if $f=0$, then $\e^2 v_\e = \pm \| g \|_{L^\infty(\R^n \setminus T^{\mathfrak a})}$ become a super and sub-solutions so, we can deduce that $\| \e^2 v_\e^2 \|_{L^\infty(\R^n \setminus T^{\mathfrak a})} \le \| g \|_{L^\infty(\R^n \setminus T^{\mathfrak a})}$.
\end{proof}

\begin{lem} \label{lem-precom-osc}
Let $v_{\e}$ be the solution of \eqref{eqn-precom} and
\begin{equation*}
\hat{v}_\e(y) = v_\e(y) - \min_{y \in \R^n \setminus T^{\mathfrak a}} v_\e(y).
\end{equation*}
Then we have
\begin{equation*}
\osc_{\R^n \setminus T^{\mathfrak a}} v_\e = \osc_{\R^n \setminus T^{\mathfrak a}} \hat{ v}_\e\leq C \left( \| f \|_{L^\infty(\R^n \setminus T^{\mathfrak a})} + \| g \|_{L^\infty(\R^n \setminus T^{\mathfrak a})} \right)
\end{equation*}
for a uniform constant $C=C(n,\lambda, \Lambda, \mu, \mathfrak{a})$.
\end{lem}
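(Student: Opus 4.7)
The plan is to reduce the oscillation estimate to a Harnack-type inequality on a single fundamental region, by combining periodicity with the $L^\infty$ bound on $\e^2 v_\e$ from Lemma \ref{lem-precom-bdd}. Setting $\hat v_\e = v_\e - \min v_\e \ge 0$, the shift is by a constant, so $\hat v_\e$ is again periodic and $\osc v_\e = \osc \hat v_\e = \max \hat v_\e$. The shifted function satisfies the same interior equation and the modified oblique condition
$$ b^i(y)\, D_i \hat v_\e(y) + \e^2 \hat v_\e(y) = \tilde g(y) := g(y) - \e^2 \min v_\e \quad \text{on } \partial T^{\mathfrak a}. $$
Since $|\e^2 \min v_\e| \le \|\e^2 v_\e\|_\infty \le C_0(\|f\|_\infty + \|g\|_\infty)$ by Lemma \ref{lem-precom-bdd}, we get $\|\tilde g\|_\infty \le C_1(\|f\|_\infty + \|g\|_\infty)$, with $C_1$ depending only on $n,\lambda,\Lambda,\mu,\mathfrak a$.

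By periodicity, both the minimum (value $0$) and the maximum (value $M_\e := \max \hat v_\e$) of $\hat v_\e$ are attained at points lying in the closure of a fixed bounded set, e.g.\ $Q \setminus T^{\mathfrak a}$ for a cube $Q$ of side $3$. On this perforated Lipschitz domain, $\hat v_\e \ge 0$ is a viscosity solution of a uniformly elliptic equation with oblique derivative boundary data $b^i D_i \hat v_\e = \tilde g - \e^2 \hat v_\e$ on the hole boundaries (whose $L^\infty$ norm is at most $\|\tilde g\|_\infty + \e^2 M_\e$). Applying a Harnack inequality for nonnegative solutions of such oblique-derivative problems, and chaining interior and boundary balls from the min point to the max point, one obtains
$$ M_\e \le C_2 \bigl( \|f\|_\infty + \|\tilde g\|_\infty + \e^2 M_\e \bigr), $$
where $C_2 = C_2(n,\lambda,\Lambda,\mu,\mathfrak a)$.

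A dichotomy in $\e$ closes the argument. If $C_2 \e^2 \le 1/2$, then absorption yields $M_\e \le 2C_2(\|f\|_\infty + \|\tilde g\|_\infty) \le C(\|f\|_\infty + \|g\|_\infty)$. Otherwise $\e$ is bounded below by a constant $\e_0 = (2C_2)^{-1/2}$, in which case Lemma \ref{lem-precom-bdd} gives $\|v_\e\|_\infty \le \e_0^{-2} C_0(\|f\|_\infty + \|g\|_\infty)$ directly, so $M_\e \le 2\|v_\e\|_\infty \le C(\|f\|_\infty + \|g\|_\infty)$. In both cases the constant depends only on $n,\lambda,\Lambda,\mu,\mathfrak a$.

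The main obstacle is establishing the Harnack inequality with the oblique boundary condition on the perforated set. Interior Harnack is the standard Krylov–Safonov estimate; near $\partial T^{\mathfrak a}$ one must use a boundary Harnack for oblique-derivative problems, relying on the uniform obliqueness $b \cdot \nu \ge \mu$ and the $C^{1,\alpha}$-regularity of $\partial T^{\mathfrak a}$ and of $b^i(\mathfrak a\,\cdot)$ provided by Conditions I. The zeroth-order $\e^2 \hat v_\e$ term is treated as a controlled perturbation and absorbed through the dichotomy above, which is why no smallness assumption on $\e$ is needed in the final estimate.
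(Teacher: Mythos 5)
Your proposal is correct in outline, but it takes a genuinely different route at the key technical step. The paper never invokes a Harnack inequality up to the oblique boundary: it applies only the interior Krylov--Safonov Harnack inequality of \cite{CC} to $\hat v_\e$ on the region $Q\setminus T^{\mathfrak a+\delta}$, i.e.\ a fixed distance $\delta$ away from the holes (giving $S_*\le C_1(I_*+\|f\|_\infty)$ for the sup/inf over that region), and then transfers this bound up to $\partial T^{\mathfrak a}$ by comparing $\hat v_\e$ in the annulus $B_{\mathfrak a+\delta}\setminus B_{\mathfrak a}$ with the explicit quadratic barrier $v^+=-\tfrac K2(|y|^2-(\mathfrak a+\delta)^2)+S_*$, with $K$ chosen in terms of $\|f\|_\infty$, $\|g\|_\infty$ and $|\gamma_\e|=|\min \e^2 v_\e|$; since $v^+\ge 0$ the $\e^2$ zeroth-order term in the boundary condition has the right sign automatically, so no absorption or dichotomy in $\e$ is needed, and the only $\e$-dependent quantity, $\gamma_\e$, is controlled by Lemma \ref{lem-precom-bdd} exactly as you do. Your route instead chains interior and boundary balls and rests on a boundary Harnack inequality for nonnegative (viscosity) solutions of oblique-derivative problems, treating $\e^2\hat v_\e$ as a bounded inhomogeneity and absorbing $\e^2 M_\e$ via the dichotomy; the dichotomy itself is fine, and such boundary Harnack estimates do exist (e.g.\ in Lieberman's work on oblique problems), but this is a substantially heavier external ingredient that you assert rather than prove, and you cannot shortcut it through the paper's $\mathcal C^{2,\alpha}$ regularity, since that estimate (Lemma \ref{lem-precom-c2}) is derived \emph{after} and partly \emph{from} the oscillation bound. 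So: your argument can be made rigorous provided you supply or properly cite a boundary Harnack for the oblique problem in the viscosity framework; the paper's barrier argument buys the same conclusion with only the interior Harnack plus the comparison principle already established, which is why it is the more self-contained choice here.
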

\begin{proof}
For small $\delta>0$, $T^{\mathfrak{a}+\delta}$ be a set of balls with radius $\mathfrak{a}+\delta$ instead of balls with radius $\mathfrak{a}$. Let $S_* = \sup_{\R^n \setminus T^{\mathfrak{a}+\delta}} \hat{v}_\e$, $I_* = \inf_{\R^n \setminus T^{\mathfrak{a}+\delta}} \hat{v}_\e$, $S_0 = \sup_{\R^n \setminus T^{\mathfrak a}} \hat{v}_\e = \osc \hat{v}_\e$, $I_0=\inf_{\R^n \setminus T^{\mathfrak a}} \hat{v}_\e=0$ and $\gamma_\e = \min_{y \in \R^n \setminus T^{\mathfrak a}} \e^2 v_\e(y)$.

Then  $\hat{v}_\e$ satisfies
\begin{equation} \label{eqn-precom-hat} \begin{cases}
a_{ij}(y) D_{ij} \widehat{v}_\e(y) = f(y) &\text{ in } \R^n \setminus T^{\mathfrak a} \\
b^i(y) D_i\widehat{v}_\e + \e^2 \widehat{v}_\e(y) = g(y) - \gamma_\e &\text{ on } \partial T^{\mathfrak a}.
\end{cases} \end{equation}
Let  $Q$ be a unit cell of  $\R^n \setminus T^{\mathfrak a}$ which is punctured by a ball $B_\mathfrak{a}(y_0)$. We may assume that $y_0 = 0$. Since $\hat{v}_\e$ is nonnegative and $\overline{Q\setminus T^{a+\delta}}$ is contained in $\R^n \setminus T^{\mathfrak a}$, we can apply the Harnack estimate (in \cite{CC}) on $\widehat{v}_\e$ in $Q \setminus T^{\mathfrak{a}+\delta}$, and hence we have
\begin{equation*}
\sup_{Q \setminus T^{\mathfrak{a}+\delta}} \widehat{v}_\e \le C_1 \left( \inf_{Q \setminus T^{\mathfrak{a}+\delta}} \widehat{v}_\e + \| f \|_{L^\infty(\R^n \setminus T^{\mathfrak a})} \right)
\end{equation*}
for some $C_1$ which depend only on $n$, $\lambda$, $\Lambda$, $\mathfrak{a}$ and $\delta$.
And hence, from the periodicity, we have
\begin{equation*}
S_* \le C_1 \left( I_* + \| f \|_{L^\infty(\R^n \setminus T^{\mathfrak a})} \right).
\end{equation*}

Let $v^+ = -\displaystyle\frac{K}{2}(|y|^2 - (\mathfrak{a}+\delta)^2 ) + S_* $ for some $K>0$.
Then we have
\begin{equation*}\begin{aligned}
&a_{ij}(y) D_{ij} v^+ \le - n \lambda K &&\text{ in } B_{\mathfrak{a}+\delta} \setminus T^{\mathfrak a} \\
&v^+ \ge S_* \ge \hat{v}_\e &&\text{ on } \partial B_{\mathfrak{a}+\delta} \\
\end{aligned}\end{equation*}
and
\begin{equation*} \begin{aligned}
b^i(y) D_i v^+ + \e^2 v^+ &= K a y \cdot \nu  + \e^2 v^+(y) \\
&\ge K a \mu  + \e^2 v^+(y) \\
&\ge  K a \mu
\end{aligned} \end{equation*}
on $\partial B_\mathfrak{a}$.

Then, if we select $K = \displaystyle\frac{1}{n\lambda} \| f \|_{L^\infty(\R^n \setminus T^{\mathfrak a})} + \frac{1}{a \mu}\left( \| g \|_{L^\infty(\R^n \setminus T^{\mathfrak a})} + | \gamma_\e | \right)$, $v^+$ is a super solution in $B_{\mathfrak{a}+\delta} \setminus B_\mathfrak{a}$. So, a comparison principle tells us  $v^+ \ge  \hat{v}_\e$ in $B_{\mathfrak{a}+\delta} \setminus T^{\mathfrak a}$ and hence
\begin{equation*}
S_0 \le - \displaystyle\frac{K}{2}(\mathfrak{a}^2 - (\mathfrak{a}+\delta)^2 ) + S_*.
\end{equation*}
So, by choosing $\delta$ properly between $0$ and $\frac{1}{2} - \mathfrak{a}$, we have $S_0 \le S_* + C_2 K$ for some constant $C_2(\mathfrak{a}) > 0$.
Similarly, We can obtain $I_0 \ge I_* -C_2 K$.

Combine these three results. Then we can conclude
\begin{equation*} \begin{aligned}
S_0 &\le S_* + C_2 K \\
&\le C_1 (I_* + \| f \|_{L^\infty(\R^n \setminus T^{\mathfrak a})}) + C_2 K \\
&\le C_1 ( I_0 + C_2 K )  + C_1 \| f \|_{L^\infty(\R^n \setminus T^{\mathfrak a})} + C_2 K \\
&\le C \left( \| f \|_{L^\infty(\R^n \setminus T^{\mathfrak a})} + \| g \|_{L^\infty(\R^n \setminus T^{\mathfrak a})} + | \gamma_\e | \right)
\end{aligned} \end{equation*}
where $C$ depends only on $n, \lambda, \Lambda, \mu$ and $\mathfrak{a}$.
Finally, applying lemma \ref{lem-precom-bdd} to get $|\gamma_\e| \le   C \left( \| f \|_{L^\infty(\R^n \setminus T^{\mathfrak a})} + \| g \|_{L^\infty(\R^n \setminus T^{\mathfrak a})} \right)$, we get the conclusion.
\end{proof}

Now, we are discussing the regularity of $v_\e$. The regularity of viscosity solution of bounded domain has been developed by  many authors. Especially, we will use the results in \cite{CC}, \cite{LT} and \cite{GT} to get the regularity of $v_\e$. Let us assume that $\|a_{ij} \|_{C^\alpha(\R^n \setminus T^{\mathfrak a})} + \| b^i(\mathfrak{a}~\cdot) \|_{\frac{1}{\mathfrak{a}} \left( C^{1,\alpha}(\R^n \setminus T^{\mathfrak a}) \right)} \le \Lambda$ and $\| f \|_{C^\alpha(\R^n \setminus T^{\mathfrak a})} + \| g \|_{C^{1,\alpha}(\R^n \setminus T^{\mathfrak a})}$ is bounded.
Let $Q$ be a cell of $\R^n$. We may assume that the center of $Q$ and $B_\mathfrak{a}$ is $0$. By applying the interior estimate in \cite{CC}, $\widehat{v}_\e$ is $C^2$ at every interior points and hence for some open set $\widetilde{Q}$ which is contained in $\R^n \setminus \overline{T^{\mathfrak a}}$ and containing $\partial Q$, and
\begin{equation*} \begin{aligned}
\| \widehat{v}_\e \|_{C^{2,\alpha}(\widetilde{Q})} &\le C \left( \| \widehat{v}_\e \|_{L^\infty(Q\setminus B_a)} + \| f \|_{C^\alpha(Q\setminus B_a
)} \right) \\
&\le C \left( \| f \|_{C^\alpha(\R^n \setminus T^{\mathfrak a})} + \| g \|_{C^{1,\alpha}(\R^n \setminus T^{\mathfrak a})} \right)
\end{aligned} \end{equation*}
where $C$ is a constant depending only on $n$, $\lambda$, $\Lambda$ and $\mathfrak{a}$. Let $\phi(y)$ be a function which has same value with $\widehat{v}_\e$ in $\widetilde{Q}$. Then, $\widehat{v}_\e$ satisfies
\begin{equation*} \begin{cases}
a_{ij}(y) D_{ij} \widehat{v}_\e(y) = f(y) &\text{ in } Q \setminus B_\mathfrak{a} \\
b^i(y) D_i\widehat{v}_\e + \e^2 \widehat{v}_\e(y) = g(y) - \gamma_\e &\text{ on } \partial B_\mathfrak{a} \\
\widehat{v} = \phi &\text{ on } \partial Q.
\end{cases} \end{equation*}

So, from the \cite{LT}, $\widehat{v}_\e$ is $\mathcal{C}^{2,\alpha}$ in $\overline{Q \setminus B_\mathfrak{a}}$ (hence in $\overline{\R^n \setminus T^{\mathfrak a}}$) and, from the a priori estimate in \cite{GT}, we have the $\mathcal{C}^{2,\alpha}$ estimate
\begin{equation} \begin{aligned} \label{eqn-precom-c2}
\| \widehat{v}_\e \|_{C^{2,\alpha}(\R^n \setminus T^{\mathfrak a})} &\le C \left( \| \widehat{v}_\e \|_{L^\infty(\R^n \setminus T^{\mathfrak a})} + \| \phi \|_{C^{2,\alpha}(\R^n \setminus T^{\mathfrak a})} + \| f \|_{C^\alpha(\R^n \setminus T^{\mathfrak a})} + \| g \|_{C^{1,\alpha}(\R^n \setminus T^{\mathfrak a})} \right) \\
&\le C \left( \| \widehat{v}_\e \|_{L^\infty(\R^n \setminus T^{\mathfrak a})} + \| f \|_{C^\alpha(\R^n \setminus T^{\mathfrak a})} + \| g \|_{C^{1,\alpha}(\R^n \setminus T^{\mathfrak a})} \right) \\
&\le C \left( \| f \|_{C^\alpha(\R^n \setminus T^{\mathfrak a})} + \| g \|_{C^{1,\alpha}(\R^n \setminus T^{\mathfrak a})} \right)
\end{aligned} \end{equation}

In summary, we have
\begin{lem} \label{lem-precom-c2}
Let $v$ is the (viscosity) solution of the equation \eqref{eqn-precom} with $\| a_{ij} \|_{C^\alpha(\R^n \setminus T^{\mathfrak a})} + \| b^i(\mathfrak{a}~\cdot) \|_{ C^{1,\alpha}(\frac{1}{\mathfrak{a}}(\R^n \setminus T^{\mathfrak a}))} \le \Lambda$ and  $\| f \|_{C^{\alpha}(\R^n \setminus T^{\mathfrak a})}+\| g \|_{C^{1,\alpha}(\R^n \setminus T^{\mathfrak a})}<\infty$. Then $v$ is in $\mathcal{C}^{2,\alpha}(\R^n \setminus T^{\mathfrak a})$ for some $\alpha$ and we have
\begin{equation}
\| \widehat{v}_\e \|_{C^{2,\alpha}(\R^n \setminus T^{\mathfrak a})} \le C \left( \| f \|_{C^\alpha(\R^n \setminus T^{\mathfrak a})} + \| g \|_{C^{1,\alpha}(\R^n \setminus T^{\mathfrak a})} \right).
\end{equation}
\end{lem}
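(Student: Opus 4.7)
The plan is to combine interior $C^{2,\alpha}$ estimates for uniformly elliptic non-divergence equations with boundary Schauder regularity for the oblique derivative problem, exploiting the $\mathbb{Z}^n$-periodicity to localize everything to a single fundamental cell punctured by one hole $B_\mathfrak{a}$.

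First I would normalize, passing to $\widehat{v}_\e = v_\e - \min v_\e$, so that Lemmas \ref{lem-precom-bdd} and \ref{lem-precom-osc} already supply the $\e$-independent bounds
$$\|\widehat{v}_\e\|_{L^\infty(\R^n \setminus T^{\mathfrak a})} + |\gamma_\e| \le C\bigl(\|f\|_{L^\infty} + \|g\|_{L^\infty}\bigr),$$
where $\gamma_\e := \min_y \e^2 v_\e(y)$, and the transformed boundary condition reads $b^i D_i \widehat{v}_\e + \e^2 \widehat{v}_\e = g - \gamma_\e$ on $\partial T^{\mathfrak a}$. Next, applying the interior $C^{2,\alpha}$ estimate of \cite{CC} on a shell $\widetilde{Q}$ that lies strictly inside $\R^n \setminus \overline{T^{\mathfrak a}}$ and contains the boundary of a fixed cell $Q$, I obtain
$$\|\widehat{v}_\e\|_{C^{2,\alpha}(\widetilde{Q})} \le C\bigl(\|\widehat{v}_\e\|_{L^\infty} + \|f\|_{C^\alpha}\bigr).$$
By periodicity the restriction of $\widehat{v}_\e$ to $\partial Q$ serves as $C^{2,\alpha}$ Dirichlet data $\phi$ with norm controlled by the same quantity, reducing the study of $\widehat{v}_\e$ on $Q \setminus B_\mathfrak{a}$ to a mixed Dirichlet/oblique problem with smooth data on every piece of the boundary.

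For this mixed problem, boundary regularity at $\partial B_\mathfrak{a}$ follows from the oblique derivative theory of Lieberman-Trudinger \cite{LT}, and the global Schauder-type a priori estimate of \cite{GT} combines interior and boundary estimates into
$$\|\widehat{v}_\e\|_{C^{2,\alpha}(\overline{Q \setminus B_\mathfrak{a}})} \le C\bigl(\|\widehat{v}_\e\|_{L^\infty} + \|\phi\|_{C^{2,\alpha}} + \|f\|_{C^\alpha} + \|g - \gamma_\e\|_{C^{1,\alpha}}\bigr).$$
Substituting the bounds on $\phi$, $\|\widehat{v}_\e\|_{L^\infty}$, and $|\gamma_\e|$, and extending by periodicity to all of $\R^n \setminus T^{\mathfrak a}$, yields the claimed inequality.

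The main obstacle I anticipate is ensuring the constants are genuinely independent of $\e$. The lower-order boundary term $\e^2 \widehat{v}_\e$ is a zeroth-order perturbation of the oblique condition that could in principle worsen the Schauder constant; this is circumvented precisely by absorbing it into the right-hand side via the shift $\gamma_\e$, after which the uniform $L^\infty$ and oscillation bounds from Lemmas \ref{lem-precom-bdd} and \ref{lem-precom-osc} make the whole chain of estimates $\e$-independent. A secondary subtlety is the interface between the viscosity interior theory and the classical boundary Schauder theory; it is resolved by observing that the interior $C^{2,\alpha}$ regularity promotes $\widehat{v}_\e$ to a classical solution in a neighborhood of $\partial B_\mathfrak{a}$, where \cite{LT} and \cite{GT} apply directly.
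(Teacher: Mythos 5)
Your proposal follows essentially the same route as the paper: normalize to $\widehat{v}_\e$ using the $L^\infty$ and oscillation bounds of Lemmas \ref{lem-precom-bdd} and \ref{lem-precom-osc} (absorbing $\gamma_\e$ into the boundary data), apply the interior $C^{2,\alpha}$ estimate of \cite{CC} on a neighborhood $\widetilde{Q}$ of $\partial Q$ to produce Dirichlet data $\phi$ by periodicity, and then invoke \cite{LT} for regularity up to $\partial B_\mathfrak{a}$ together with the global Schauder a priori estimate of \cite{GT} on the mixed problem in $Q \setminus B_\mathfrak{a}$. The argument and the chain of estimates match the paper's, so the proposal is correct as written.
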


\subsection{The existence and uniqueness of $\gamma$}
In this section, we are going to prove the theorem \ref{thm-main-1} by applying previous subsection.
First, for fixed $\xi \in \R^n$, consider the following approximated equation
\begin{equation} \label{eqn-precor1-e} \begin{cases}
a_{ij}(y) D_{ij}v_\e = f(y) &\text{ in } \R^n \setminus T^{\mathfrak a} \\
b^i(y) \left( \xi^i + D_i v_\e \right) + \e^2 v_\e = g( y) &\text{ on } \partial T^{\mathfrak a}.
\end{cases} \end{equation}

For each $\e$, we have the periodic viscosity solution $v_\e = v_\e(y;\xi)$ of \eqref{eqn-precor1-e} by lemma \ref{lem-precom-bdd}.
\begin{lem} \label{lem-precor1-ex}
For each $\e>0$, there exists $v_\e$ satisfying \eqref{eqn-precor1-e} and we have
\begin{equation*}
\| \e^2 v_\e \|_{L^\infty(\R^n \setminus T^{\mathfrak a})} \le C(n,\Lambda,\lambda,\mu,\mathfrak{a}) \left( \| f \|_{L^\infty(\R^n \setminus T^{\mathfrak a})} + \| g \|_{L^\infty(\R^n \setminus T^{\mathfrak a})} + |\xi| \right).
\end{equation*}
\end{lem}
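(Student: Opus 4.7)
The plan is to reduce \eqref{eqn-precor1-e} to the already-solved equation \eqref{eqn-precom} by absorbing the $\xi$-term into the Neumann data. First I would set
\[
\tilde g(y) := g(y) - b^i(y)\xi^i,
\]
and observe that \eqref{eqn-precor1-e} is exactly \eqref{eqn-precom} with the boundary datum $g$ replaced by $\tilde g$. Periodicity of $\tilde g$ in $y$ is inherited from Conditions I\eqref{con-y-periodicity}, and since the bound $\|b\|_{L^\infty}\le 1$ from Condition I\eqref{con-oblique} gives
\[
\|\tilde g\|_{L^\infty(\R^n\setminus T^{\mathfrak a})} \le \|g\|_{L^\infty(\R^n\setminus T^{\mathfrak a})} + |\xi|,
\]
the hypotheses of Lemma \ref{lem-precom-bdd} are verified.

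Next I would invoke Lemma \ref{lem-precom-bdd} (with $\tilde g$ in place of $g$) to produce a periodic viscosity solution $v_\e$ of \eqref{eqn-precor1-e} satisfying
\[
\|\e^2 v_\e\|_{L^\infty(\R^n\setminus T^{\mathfrak a})}
\le C(n,\Lambda,\lambda,\mu,\mathfrak{a})\bigl(\|f\|_{L^\infty(\R^n\setminus T^{\mathfrak a})} + \|\tilde g\|_{L^\infty(\R^n\setminus T^{\mathfrak a})}\bigr).
\]
Combining this with the previous inequality yields the stated bound in terms of $\|f\|_{L^\infty}$, $\|g\|_{L^\infty}$, and $|\xi|$.

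Since the argument is essentially a substitution, there is no real obstacle; the only point to check is that the reduction preserves the structural hypotheses of Lemma \ref{lem-precom-bdd}. The uniform ellipticity, the uniform obliqueness, and the periodicity of the coefficients $a_{ij}$ and $b^i$ are unchanged by the substitution, and $\tilde g$ remains periodic and bounded. Uniqueness (within the class of bounded periodic viscosity solutions) then follows from Lemma \ref{lem-precom-cp} applied to the modified equation, so the solution $v_\e(\,\cdot\,;\xi)$ is well-defined.
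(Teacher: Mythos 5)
Your proposal is correct and matches the paper's own argument: the paper likewise obtains $v_\e$ directly from Lemma \ref{lem-precom-bdd} by viewing the boundary condition of \eqref{eqn-precor1-e} as that of \eqref{eqn-precom} with datum $g - b^i\xi^i$, whose $L^\infty$ norm is controlled by $\|g\|_{L^\infty} + |\xi|$ thanks to $\|b\|_{L^\infty}\le 1$. Nothing further is needed.
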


Actually, we just need the result when $f=g=0$. But, in this section, we can consider more general case($f$and $g$ are not identically $0$) because it does not effect on the result. The following is about the oscillation of $\widehat{v}_\e$.

\begin{lem} \label{lem-precor1-osc}
Let $v_{\e}$ be the solution of the equation \eqref{eqn-precor1-e} and
\begin{equation*}
\hat{v}_\e(y) = v_\e(y) - \min_{y \in \R^n \setminus T^{\mathfrak a}} v_\e(y).
\end{equation*}
Then we have
\begin{equation*}
\osc_{\R^n \setminus T_1} \hat{ v}_\e\leq C(n,\Lambda,\lambda,\mu,\mathfrak{a}) \left( \| f(\cdot) \|_{L^\infty(\R^n \setminus T^{\mathfrak a})} + \| g(\cdot) \|_{L^\infty(\R^n \setminus T^{\mathfrak a})} + |\xi| \right).
\end{equation*}
\end{lem}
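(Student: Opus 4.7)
The plan is to reduce Lemma \ref{lem-precor1-osc} directly to Lemma \ref{lem-precom-osc} by absorbing the linear term $b^i(y)\xi^i$ into the Neumann datum. Indeed, the equation \eqref{eqn-precor1-e} differs from \eqref{eqn-precom} only by this extra term on $\partial T^{\mathfrak a}$, and the interior equation as well as the $y$-periodic structure are identical, so the previous oscillation estimate should apply verbatim to a suitably modified boundary data.

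Concretely, I would set
$$\tilde g(y) := g(y) - b^i(y)\xi^i$$
and rewrite the oblique boundary condition as $b^i(y) D_i v_\e + \e^2 v_\e = \tilde g(y)$ on $\partial T^{\mathfrak a}$. Then $v_\e$ is a periodic viscosity solution of the equation \eqref{eqn-precom} with forcing $f$ and boundary datum $\tilde g$. By condition (\ref{con-y-periodicity}) the coefficient $b^i$ is $y$-periodic, so $\tilde g$ is periodic; and by condition (\ref{con-oblique}) we have $\|b\|_{L^\infty} \le 1$, which yields
$$\|\tilde g\|_{L^\infty(\R^n \setminus T^{\mathfrak a})} \le \|g\|_{L^\infty(\R^n \setminus T^{\mathfrak a})} + |\xi|.$$

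Applying Lemma \ref{lem-precom-osc} (with $g$ replaced by $\tilde g$) gives
$$\osc_{\R^n \setminus T^{\mathfrak a}} \hat v_\e \,\le\, C(n,\lambda,\Lambda,\mu,\mathfrak a)\bigl(\|f\|_{L^\infty} + \|\tilde g\|_{L^\infty}\bigr) \,\le\, C(n,\lambda,\Lambda,\mu,\mathfrak a)\bigl(\|f\|_{L^\infty} + \|g\|_{L^\infty} + |\xi|\bigr),$$
which is the claimed estimate. I do not anticipate any genuine obstacle: all the serious analytic content (periodic comparison, Harnack, and the radial barrier inside a single cell) has already been assembled in the proof of Lemma \ref{lem-precom-osc}, and the present statement is simply the linear-in-$\xi$ extension obtained by incorporating the oblique drift $b^i \xi^i$ into the Neumann forcing.
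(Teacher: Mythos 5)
Your proposal is correct and is essentially the paper's own argument: the paper likewise rewrites the boundary condition by moving $b^i(y)\xi^i$ (together with the constant $\gamma_\e$ coming from subtracting the minimum) into the Neumann datum and then invokes Lemma \ref{lem-precom-osc}. Your observation that $\|b\|_{L^\infty}\le 1$ gives $\|\tilde g\|_{L^\infty}\le \|g\|_{L^\infty}+|\xi|$ is exactly the point that produces the $|\xi|$ term in the stated bound.
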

\begin{proof}
Let $\gamma_\e = \min_{y \in \R^n \setminus T^{\mathfrak a}} \e^2  v_\e(y)$.

Then  $\hat{v}_\e$ satisfies
\begin{equation} \label{eqn-precor1-hat} \begin{cases}
a_{ij}(y) D_{ij} \widehat{v}_\e(y) = f(y) &\text{ in } \R^n \setminus T^{\mathfrak a} \\
b^i(y) D_i\widehat{v}_\e + \e^2 \widehat{v}_\e(y) = g(y) - \gamma_\e - b^i(y) \xi^i &\text{ on } \partial T^{\mathfrak a}.
\end{cases} \end{equation}
Now apply lemma \ref{lem-precom-osc} and then we can get the conclusion.
\end{proof}

We can also obtain the estimate of $\widehat{v}_\e$ by lemma \ref{lem-precor1-osc} and lemma \ref{lem-precom-c2}.
\begin{lem} \label{lem-com-e-c2}
Suppose that $a_{ij}$, $f$, $b^i$ and $g$ satisfies the condition in lemma \ref{lem-precom-c2}. Then, we have
\begin{equation*}
\| \widehat{v}_\e \|_{C^{2,\alpha}(\R^n \setminus T^{\mathfrak a})} \le C \left( \| f \|_{C^\alpha(\R^n \setminus T^{\mathfrak a})} + \| g \|_{C^{1,\alpha}(\R^n \setminus T^{\mathfrak a})} + |\xi| \right)
\end{equation*}
where $C$ is depending only on $n$, $\lambda$, $\Lambda$ and $\mathfrak{a}$.
\end{lem}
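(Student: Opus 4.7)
The plan is to recognize that the equation \eqref{eqn-precor1-hat} satisfied by $\widehat{v}_\e$ is exactly of the form \eqref{eqn-precom} treated in Lemma \ref{lem-precom-c2}, but with the modified boundary datum
\begin{equation*}
\widetilde{g}(y) := g(y) - \gamma_\e - b^i(y)\xi^i
\end{equation*}
in place of $g$. Consequently Lemma \ref{lem-precom-c2} applies directly and yields
\begin{equation*}
\|\widehat{v}_\e\|_{C^{2,\alpha}(\R^n\setminus T^{\mathfrak a})} \le C\bigl(\|f\|_{C^\alpha(\R^n\setminus T^{\mathfrak a})} + \|\widetilde g\|_{C^{1,\alpha}(\R^n\setminus T^{\mathfrak a})}\bigr),
\end{equation*}
so the lemma follows once $\|\widetilde g\|_{C^{1,\alpha}}$ is controlled by $\|g\|_{C^{1,\alpha}} + |\xi|$ (up to the $\|f\|_{C^\alpha}$ contribution) with a constant depending only on $n,\lambda,\Lambda,\mu,\mathfrak{a}$.

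Two pieces need estimates. First, the constant $\gamma_\e = \min_y \e^2 v_\e(y)$ satisfies
\begin{equation*}
|\gamma_\e| \le \|\e^2 v_\e\|_{L^\infty(\R^n\setminus T^{\mathfrak a})} \le C\bigl(\|f\|_{L^\infty} + \|g\|_{L^\infty} + |\xi|\bigr)
\end{equation*}
by Lemma \ref{lem-precor1-ex}; being a constant, it contributes only to the $L^\infty$ part of the $C^{1,\alpha}$ norm. Second, the oblique term $b^i(y)\xi^i$ is linear in $\xi$, and the hypothesis $\|b^i(\mathfrak a\,\cdot)\|_{C^{1,\alpha}(\frac{1}{\mathfrak a}(\R^n\setminus T^{\mathfrak a}))} \le \Lambda$ rescales to a $C^{1,\alpha}$ bound $\|b^i\|_{C^{1,\alpha}(\R^n\setminus T^{\mathfrak a})} \le C(\mathfrak a,\Lambda)$ (with the constant allowed to depend on $\mathfrak a$). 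Hence $\|b^i\xi^i\|_{C^{1,\alpha}(\R^n\setminus T^{\mathfrak a})} \le C(\mathfrak a,\Lambda)|\xi|$. Combining, we obtain
\begin{equation*}
\|\widetilde g\|_{C^{1,\alpha}(\R^n\setminus T^{\mathfrak a})} \le C\bigl(\|f\|_{C^\alpha(\R^n\setminus T^{\mathfrak a})} + \|g\|_{C^{1,\alpha}(\R^n\setminus T^{\mathfrak a})} + |\xi|\bigr),
\end{equation*}
and feeding this into the Schauder estimate above produces the claim.

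The argument is essentially bookkeeping: the real analytic work of establishing the $C^{2,\alpha}$ estimate (interior Schauder on an enlarged cell, the Lieberman–Trudinger boundary Schauder estimate on the spherical hole, and the Gilbarg–Trudinger a priori estimate to absorb the patching) was already carried out for Lemma \ref{lem-precom-c2}. The only point requiring mild care, and the main (modest) obstacle, is that the modified datum $\widetilde g$ depends on $\e$ through $\gamma_\e$; one must verify that this $\e$-dependence is controlled in an $\e$-independent way, and this is exactly what Lemma \ref{lem-precor1-ex} provides. No additional compactness or limiting argument is needed.
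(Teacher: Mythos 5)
Your proposal is correct and follows essentially the same route as the paper: view \eqref{eqn-precor1-hat} as an instance of \eqref{eqn-precom} with boundary datum $g-\gamma_\e-b^i\xi^i$, bound its $C^{1,\alpha}$ norm using the rescaled bound $\|b^i\|_{C^{1,\alpha}}\le \mathfrak a^{-(1+\alpha)}\Lambda$ from condition \eqref{con-alpha} together with the uniform bound on $|\gamma_\e|$, and then invoke Lemma \ref{lem-precom-c2}. The only cosmetic difference is that you cite Lemma \ref{lem-precor1-ex} for $|\gamma_\e|$ where the paper points to Lemma \ref{lem-precor1-osc}; both yield the same estimate, so nothing is missing.
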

\begin{proof}
\begin{equation*} \begin{aligned}
&\| b^i(\cdot) \|_{C^{1,\alpha}(\R^n \setminus T^{\mathfrak a})} \le \displaystyle\frac{1}{a^{1+\alpha}} \Lambda \\
&|\gamma_\e| \le C(n,\Lambda,\lambda,\mu,\mathfrak{a}) \left( \| f \|_{L^\infty(\R^n \setminus T^{\mathfrak a})} + \| g \|_{L^\infty(\R^n \setminus T^{\mathfrak a})} + |\xi| \right)
\end{aligned} \end{equation*}
from the condition \eqref{con-alpha} in chapter 1 and lemma \ref{lem-precor1-osc}, $\| g(\cdot) -\gamma_\e -b^i(\cdot) \xi^i \|_{C^{1,\alpha}(\R^n \setminus T^{\mathfrak a})}$ is bounded. Hence we can apply lemma \ref{lem-precom-c2} to $\widehat{v}_\e$ of \eqref{eqn-precor1-hat} and we can get the estimate.
\end{proof}
Lemma \ref{lem-com-e-c2} and \ref{lem-precor1-ex} tells us that $\| \widehat{v}_\e \|_{C^{2,\alpha}} + \| \e^2 v_\e \|_{L^{\infty}}$ is bounded uniformly on $\e$.
So, from  Arzela-Ascoli theorem, we can deduce that there is a $v \in \mathcal{C}^2(\R^n \setminus T^{\mathfrak a})$, $\gamma \in \R$, and a subsequence $ \{ \e_j \}$ where $\widehat{v}_{\e_j}$ converges $v$ in $\mathcal{C}^2(\R^n \setminus T^{\mathfrak a})$ and $\e^2 v_\e \ra \gamma$ uniformly.

And if we take $j \ra \infty$, then $v \in C^{2,\alpha}$ and $\alpha$ satisfy the equation \eqref{eqn-main-com}.
\begin{prop} \label{prop-exist-alpha}
If $a_{ij}$, $f$, $b^i$ and $g$ satisfies the condition in lemma \ref{lem-precom-c2} then, we always find $\gamma = \gamma(\xi;(a_{ij}),(b^i),f,g,\mathfrak{a})$ and $v = v(y;\xi) \in \mathcal{C}^{2,\alpha}$ which satisfy the equation \eqref{eqn-main-com}.
\end{prop}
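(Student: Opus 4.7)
The plan is to pass to the limit $\e \to 0$ in the penalized family $\eqref{eqn-precor1-e}$. All the technical groundwork has been assembled in the preceding lemmas; what remains is to package it into a compactness argument and identify the right candidate for $\gamma$.

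First, for each $\e > 0$ I invoke Lemma \ref{lem-precor1-ex} to produce a periodic viscosity solution $v_\e$ of \eqref{eqn-precor1-e} with $\|\e^2 v_\e\|_{L^\infty} \le C(\|f\|_{L^\infty} + \|g\|_{L^\infty} + |\xi|)$. Setting $\widehat{v}_\e = v_\e - \min v_\e$, the bound on $\|\widehat{v}_\e\|_{C^{2,\alpha}(\R^n \setminus T^{\mathfrak a})}$ supplied by Lemma \ref{lem-com-e-c2} is also uniform in $\e$. Combining these two facts, periodicity, and the Arzel\`a--Ascoli theorem yields a subsequence $\e_j \to 0$ along which $\widehat{v}_{\e_j} \to v$ in $C^2(\R^n \setminus T^{\mathfrak a})$ with $v \in C^{2,\alpha}$, and $\e_j^2 v_{\e_j} \to \gamma_\infty$ uniformly for some bounded function $\gamma_\infty$ on $\R^n \setminus T^{\mathfrak a}$.

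The key observation is that the limit $\gamma_\infty$ is in fact a constant. Indeed, writing $\gamma_{\e} := \min_{y} \e^2 v_\e(y)$, Lemma \ref{lem-precor1-osc} gives $\osc(\e^2 v_\e) = \e^2 \osc(\widehat{v}_\e) \le C\e^2(\|f\|_{L^\infty} + \|g\|_{L^\infty} + |\xi|) \to 0$, so the uniform limit $\gamma_\infty$ has zero oscillation. Call this constant value $\gamma$; then $\gamma_{\e_j} \to \gamma$ as well. Now pass to the limit in the equations satisfied by $\widehat{v}_{\e_j}$, which from \eqref{eqn-precor1-hat} read
\begin{equation*}
\begin{cases}
a_{ij}(y) D_{ij}\widehat{v}_{\e_j}(y) = f(y) & \text{in } \R^n \setminus T^{\mathfrak a},\\
b^i(y) D_i \widehat{v}_{\e_j}(y) + \e_j^2 \widehat{v}_{\e_j}(y) = g(y) - \gamma_{\e_j} - b^i(y)\xi^i & \text{on } \partial T^{\mathfrak a}.
\end{cases}
\end{equation*}
The $C^2$ convergence lets me pass to the limit classically in the interior equation, while $\e_j^2 \widehat{v}_{\e_j} \to 0$ uniformly (since $\widehat{v}_{\e_j}$ is uniformly bounded) and $\gamma_{\e_j} \to \gamma$ on the boundary. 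Rearranging yields $a_{ij} D_{ij} v = f$ in $\R^n \setminus T^{\mathfrak a}$ and $b^i(y)(\xi^i + D_i v(y)) + \gamma = g(y)$ on $\partial T^{\mathfrak a}$, which is precisely \eqref{eqn-main-com}.

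The main technical hurdle is the argument that $\gamma_\infty$ is actually a constant, rather than merely a bounded continuous function; this is where the oscillation estimate of Lemma \ref{lem-precor1-osc} plays its essential role, since without it one would only obtain a homogenized equation with a function $\gamma_\infty(y)$ on the right-hand side of the boundary condition, and the compatibility interpretation would be lost. Once this is handled, identification of the limit equation is a routine stability-of-viscosity-solutions statement, and the $C^{2,\alpha}$ regularity of the limit $v$ is inherited from the uniform $C^{2,\alpha}$ bound on the approximants.
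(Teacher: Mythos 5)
Your proposal is correct and follows essentially the same route as the paper: uniform bounds from Lemma \ref{lem-precor1-ex} and Lemma \ref{lem-com-e-c2}, Arzel\`a--Ascoli compactness for $\widehat{v}_\e$ together with convergence of $\e^2 v_\e$ to a constant, and passage to the limit in \eqref{eqn-precor1-hat} to obtain \eqref{eqn-main-com}. Your explicit use of the oscillation bound of Lemma \ref{lem-precor1-osc} to see that the limit of $\e^2 v_\e$ is a constant merely spells out a step the paper leaves implicit.
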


\begin{lem}[Uniqueness of $\gamma$] \label{lem-alpha-uniq}
Let $\gamma$ be given as  \ref{prop-exist-alpha}. Then such $\gamma$ is unique.
\end{lem}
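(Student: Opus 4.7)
\textbf{Proof plan for Lemma \ref{lem-alpha-uniq}.}

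The plan is to argue by contradiction, reducing the uniqueness of $\gamma$ to an application of the strong maximum principle together with the Hopf boundary point lemma on the difference of two putative solutions.

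Suppose, for contradiction, that two distinct constants $\gamma_1 \neq \gamma_2$ both admit bounded periodic solutions $v_1, v_2$ of \eqref{eqn-main-com}, which by Lemma \ref{lem-precom-c2} may be taken in $\mathcal{C}^{2,\alpha}(\R^n \setminus T^{\mathfrak a})$. Without loss of generality set $\delta := \gamma_1 - \gamma_2 > 0$ and $w := v_1 - v_2$. Subtracting the two copies of \eqref{eqn-main-com} shows that $w$ is $\mathbb{Z}^n$-periodic and $\mathcal{C}^{2,\alpha}$, and satisfies the homogeneous problem
\begin{equation*}
a_{ij}(y) D_{ij} w = 0 \quad \text{in } \R^n \setminus T^{\mathfrak a}, \qquad b^i(y) D_i w = -\delta \quad \text{on } \partial T^{\mathfrak a}.
\end{equation*}
Since $w$ is continuous and periodic, it attains its global minimum at some $y_0 \in \overline{\R^n \setminus T^{\mathfrak a}}$.

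I then split into two cases. If $y_0$ is an interior point, the strong minimum principle for the uniformly elliptic operator $a_{ij}(y) D_{ij}$ forces $w$ to be constant on its connected component, hence globally constant by periodicity; but then $b^i D_i w \equiv 0$ contradicts $b^i D_i w = -\delta < 0$. If instead $y_0 \in \partial T^{\mathfrak a}$, then since the complement of $T^{\mathfrak a}$ satisfies an interior sphere condition at $y_0$ (the holes are balls), the Hopf boundary point lemma applied at the minimum yields $\partial_\nu w(y_0) > 0$, where $\nu$ denotes the outward unit normal to the ball. At this boundary minimum the tangential component of $\nabla w(y_0)$ vanishes, so $\nabla w(y_0) = \partial_\nu w(y_0)\,\nu(y_0)$, and the uniform oblique condition \eqref{con-oblique} gives
\begin{equation*}
b^i(y_0) D_i w(y_0) = \partial_\nu w(y_0)\,\bigl(b(y_0)\cdot \nu(y_0)\bigr) \;\ge\; \mu\,\partial_\nu w(y_0) \;>\; 0,
\end{equation*}
again contradicting $b^i D_i w = -\delta < 0$.

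The main point to verify is the applicability of Hopf's lemma, which demands $\mathcal{C}^2$ regularity of $w$ up to $\partial T^{\mathfrak a}$; this is precisely what Lemma \ref{lem-precom-c2} supplies, so there is no genuine obstacle. A secondary subtlety is that the argument naturally pertains to the class of bounded periodic solutions, which is the class produced in Proposition \ref{prop-exist-alpha}; if one also wants to rule out bounded non-periodic solutions with different $\gamma$, one can extract a limiting minimum point by translating along a minimizing sequence and invoking the $\mathcal{C}^{2,\alpha}$ compactness of Lemma \ref{lem-precom-c2} before repeating the same dichotomy.
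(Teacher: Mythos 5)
Your overall strategy is the same as the paper's: subtract the two solutions, exclude an interior extremum by the strong maximum principle, and reach a contradiction at a boundary extremum using the obliqueness of $b$ (the paper phrases this as a touching argument with $v^1+c$, which is exactly the extremum of the difference). The one genuine problem is the orientation of $\nu$ in the oblique condition \eqref{con-oblique}. You take $\nu$ to be the outward normal of the ball, pointing into the perforated domain, and use $b\cdot\nu\ge\mu$ with that choice. The paper never states which normal it means, but every place it uses the condition forces the opposite choice: in the barrier computations of Lemma \ref{lem-precom-osc} (where $Dv^+=-Ky$ on $\partial B_{\mathfrak a}$ and yet $b^iD_iv^+\ge K\mathfrak a\mu$), in Lemma \ref{lem-cor1-osc-uni} (where $Dh^+=-\tfrac{2}{\mathfrak a\mu}\,y$ and yet $b^iD_ih^+\ge 2$), in the boundary step of the comparison proof of Lemma \ref{lem-precom-cp}, and in the paper's own proof of this lemma, the inequalities hold only if $\nu$ is the outward normal of the domain $\R^n\setminus T^{\mathfrak a}$, i.e.\ points \emph{into} the hole, so that $b$ has a positive component toward the hole. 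With that convention your boundary case collapses: at the global minimum $y_0$ of $w=v_1-v_2$ the tangential gradient vanishes and the derivative in the direction into the domain is nonnegative (indeed positive by Hopf), so $b\cdot Dw(y_0)=\bigl(b\cdot\nu_{\mathrm{ball}}\bigr)\,\partial_{\nu_{\mathrm{ball}}}w(y_0)\le -\mu\,\partial_{\nu_{\mathrm{ball}}}w(y_0)\le 0$, which is perfectly consistent with $b\cdot Dw\equiv-\delta<0$; no contradiction is obtained.

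The repair is immediate and recovers the paper's argument: run the dichotomy at the \emph{maximum} of $w$ (equivalently, set $w=v_2-v_1$, or drop the normalization $\delta>0$). At a boundary maximum the tangential gradient still vanishes and the derivative into the domain is $\le 0$, so with the paper's orientation $b\cdot Dw(y_0)\ge 0>-\delta$, the desired contradiction. Note also that Hopf's strict inequality is not needed: the strictness already comes from $\delta>0$, so the soft first-order inequality at the extremum (which is all the paper uses at its touching point) suffices, and in particular only $C^1$ regularity up to $\partial T^{\mathfrak a}$ is required, which Lemma \ref{lem-precom-c2} provides. Your closing remark about periodic versus merely bounded solutions is fine: the constants in Proposition \ref{prop-exist-alpha} come with periodic solutions, and periodicity is also what guarantees that the extremum of $w$ is attained (the paper's "since $v^1$ and $v^2$ are bounded we can find a touching constant" implicitly relies on the same fact).
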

\begin{proof}
Let $v^1(y)$ and $v^2(y)$ be two solutions of the equation \eqref{eqn-main-com} with corresponding to constants $\gamma^1$ and $\gamma^2$ respectively. And, to obtain a contradiction, assume that $\gamma^1$ and $\gamma^2$ are not same. We may assume that $\gamma^1 < \gamma^2$ without losing generality. Since $v^1$ and $v^2$ are bounded, we can find a constant $c$  such that $v^1+c$ touches $v^2$ by above at $y_0 \in \R^n \setminus T^{\mathfrak a}$.
Suppose that $y_0$ is a interior point, then $(v^1+c)-v^2$ has a local minimum at $y_0$. but since $(v^1+c)-v^2$ is a solution of $a^{ij}(y) D_{ij} ((v^1+c)-v^2) = 0$, $(v^1+c)-v^2$ cannot have its minimum at interior point because of the strong maximum principle. So $y_0$ cannot be in the interior of $\R^n \setminus T^{\mathfrak a}$. Suppose that $y_0 \in \partial T^{\mathfrak a}$. Then, $G(\xi+D(v^1+c),y) + \gamma^2 \le G( \xi + Dv^2,y) + \gamma^2 \le g(y)$ but,
\begin{equation*}
G(\xi+D(v^1+c),y) + \gamma^2 =
G(\xi+Dv^1,y) + \gamma^2 = g(y) -\gamma^1 + \gamma^2 > g(y).
\end{equation*}
So we get a contradiction and hence $\gamma^1$ should be the same with $\gamma^2$.
\end{proof}

\begin{proof}[\bf{Proof of theorem \ref{thm-main-c}}]
From proposition \ref{prop-exist-alpha}, there exist $\gamma$ that makes the equation \ref{eqn-main-com} has a solution $v$. And by lemma \ref{lem-alpha-uniq}, such a $\gamma$ is unique. 
\end{proof}
\begin{remark}
We can define a compatibility constant even the operator and boundary condition are nonlinear. More precisely, For given operator $F(M,y)$ and boundary condition $G(p,y)$, and a vector $\xi \in \R^n$, there is a constant $\alpha$ and a periodic function $v(y) \in \mathcal{C}^2(\R^n \setminus T^{\mathfrak a})$ satisfying the equation
\begin{equation} \begin{cases}
F(D^2v, r, x_0, y) = f(x_0, y) &\text{ in } \R^n \setminus T^{\mathfrak a} \\
G(\xi + Dv, y) + \gamma = g(x_0, y) &\text{ on } \partial T^{\mathfrak a}.
\end{cases} \end{equation}
if the operator satisfies the conditions in \cite{LT}. The proof is quite similar.
\end{remark}

\subsection{Examples satisfying the compatibility condition}
As we told in the introduction, the Laplace equation and the Neumann boundary condition satisfies the compatibility condition. Let $Q$ be a one cell of $\R^n \setminus T^{\mathfrak a}$ having center $0$ and $v$ is a solution satisfying the following equation:
\begin{equation*} \begin{cases}
\triangle v = 0 &\text{ in } \R^n \setminus T^{\mathfrak a} \\
\displaystyle\frac{\partial v}{\partial \nu} + \gamma = 0 &\text{ on } \partial T^{\mathfrak a}.
\end{cases} \end{equation*}

Then, by using divergence theorem, we have 
\begin{equation*} \begin{aligned}
0&=\int_{Q \setminus T^{\mathfrak a}} \triangle v dx = \int_{\partial Q} \displaystyle \frac{\partial v}{\partial \nu} d\sigma_x + \int_{\partial (T^{\mathfrak a} \cap Q)} \frac{\partial v}{\partial \nu} d\sigma_x \\
&=0 + \int_{\partial (T^{\mathfrak a} \cap Q)} - \xi \cdot \nu -\gamma d\sigma_x \\
&=-\gamma \int_{\partial (T^{\mathfrak a} \cap Q)} d\sigma_x \\
&=-\gamma |\partial (T^{\mathfrak a} \cap Q)|.
\end{aligned} \end{equation*}
Hence, $\gamma$ should be $0$.

Moreover, the operator satisfying the symmetric condition $a_{ij}(-y) = a_{ij}(y)$ and $b^i(-y) = -b^i(y)$ Then, we can show that $(a_{ij})$ and $b^i$ satisfies the compatibility condition.
\begin{prop}
Let $v_\e$ be the solution of equation \eqref{eqn-precor1-e}. Assume that $a_{ij}$ and $b^i$ satisfies $a_{ij}(-y) = a_{ij}(y)$ and $b^i(-y) = -b^i(y)$ and the condition in lemma \ref{lem-precom-c2}. Then, $v_\e(y)=-v_\e(-y)$ and hence $a_{ij}$ and $b^i$ satisfies the compatibility condition.
\end{prop}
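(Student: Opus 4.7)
The plan is to exploit the assumed point symmetries to make $v_\e$ odd about the origin, which then forces the compatibility constant to be zero by passing to the limit $\e \to 0$. The key observation is that the problem \eqref{eqn-precor1-e} (with $f = g = 0$) is invariant under the transformation $v \mapsto -v(-\,\cdot\,)$, so by uniqueness the solution must be a fixed point of this map.

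Concretely, define $w_\e(y) := -v_\e(-y)$. Periodicity of $w_\e$ follows from that of $v_\e$ together with $-\Z^n = \Z^n$. A direct chain rule computation gives $D_i w_\e(y) = (D_i v_\e)(-y)$ and $D_{ij} w_\e(y) = -(D_{ij} v_\e)(-y)$; using $a_{ij}(y) = a_{ij}(-y)$ one obtains
\[
a_{ij}(y)\,D_{ij}w_\e(y) \;=\; -a_{ij}(-y)\,(D_{ij}v_\e)(-y) \;=\; 0,
\]
since $v_\e$ solves the interior equation at $-y$. The set $T^{\mathfrak a}$ is invariant under $y \mapsto -y$, hence so is $\partial T^{\mathfrak a}$, and for $y \in \partial T^{\mathfrak a}$ the antisymmetry $b^i(y) = -b^i(-y)$ yields
\[
b^i(y)\bigl(\xi^i + D_iw_\e(y)\bigr) + \e^2 w_\e(y)
\;=\; -\Bigl[b^i(-y)\bigl(\xi^i + (D_iv_\e)(-y)\bigr) + \e^2 v_\e(-y)\Bigr] \;=\; 0.
\]
Thus $w_\e$ is a periodic, bounded viscosity solution of the same problem as $v_\e$.

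By the comparison principle of lemma \ref{lem-precom-cp} — whose key ingredient is precisely the coercive $\e^2 v$ term on the boundary — such a solution is unique, so $v_\e \equiv w_\e$, i.e.\ $v_\e(y) = -v_\e(-y)$. Recall from the discussion leading to proposition \ref{prop-exist-alpha} that $\e^2 v_\e$ converges uniformly to the compatibility constant $\gamma = \gamma(\xi)$. Evaluating the antisymmetric identity $\e^2 v_\e(y) = -\e^2 v_\e(-y)$ and letting $\e \to 0$ gives $\gamma = -\gamma$, hence $\gamma(\xi) = 0$ for every $\xi \in \R^n$ and every admissible $\mathfrak a$, which is the compatibility condition. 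The only delicate part of the argument is the sign bookkeeping on the boundary: the sign from $b^i(y) = -b^i(-y)$, the outer minus in the definition of $w_\e$, and the fact that the single chain-rule sign cancels in $D_i w_\e(y)$ but not in $D_{ij} w_\e(y)$, must conspire so that both the oblique term and the $\e^2$-term reproduce, with an overall minus sign, the boundary equation for $v_\e$ at $-y$. Everything else is routine.
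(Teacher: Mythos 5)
Your proof is correct and takes essentially the same route as the paper: reflect via $\tilde v_\e(y)=-v_\e(-y)$, verify it solves the same problem using $a_{ij}(-y)=a_{ij}(y)$, $b^i(-y)=-b^i(y)$ and the symmetry of $T^{\mathfrak a}$, and invoke the comparison/uniqueness of lemma \ref{lem-precom-cp} to conclude oddness. The only cosmetic difference is the concluding step: the paper uses oddness to get $\|v_\e\|_\infty \le \osc v_\e \le C|\xi|$ and hence $\e^2 v_\e \to 0$, whereas you pass the antisymmetry to the constant limit to get $\gamma=-\gamma=0$; both are valid (your version implicitly uses the uniqueness of $\gamma$ from lemma \ref{lem-alpha-uniq} to handle subsequences).
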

\begin{proof}
Let $\tilde{v}_\e = -v_\e(-y)$. Then $D\tilde{v}_\e = Dv_\e(-y)$ and $D^2 \tilde{v}_\e = -D^2 v_\e(-y)$. Apply it to the equation \eqref{eqn-precor1-e}. Then,
\begin{equation*} \begin{cases}
a_{ij}(y) D_{ij} \tilde{v}_\e(y) = -a_{ij}(-y) D_{ij}v_\e(-y) = 0  &\text{ in } \R^n \setminus T^{\mathfrak a} \\
b^i(y) \left( \xi^i + D_i \tilde{v}_\e(y)\right) + \e^2 \tilde{v}_\e (y) = -b^i(-y) \left( \xi^i + D_i v_\e(y)\right) - \e^2 v_\e (y) = 0 &\text{ on } \partial T^{\mathfrak a}.
\end{cases} \end{equation*}
It tells us that  $\tilde{v}_\e$ is also a solution of equation \eqref{eqn-precor1-e} and hence $v_\e(y) = \tilde{v}_\e(y) = -v_\e(-y)$ by comparison (lemma \ref{lem-precom-cp}.
From above, $v_\e$ cannot be nonnegative or nonpositive unless $v_\e = 0$ identically. So, we can conclude
\begin{equation*}
\|v_\e\|_\infty \le \osc v_\e \le C |\xi |
\end{equation*}
because of lemma \ref{lem-precor1-osc} and hence $\e^2 v_\e$ converges to 0.
\end{proof}

\section{First Corrector}\label{sec-cor1}
In this section, we are going to define the first corrector from the heuristic calculation and investigate their existence and regularity by using results in previous section.
\subsection{Existence and Regularity}
Let us consider the asymptotic expansion of $u_\e$ at $x_0 \in \Omega$. In other words, suppose that $u_\e$ has the following asymptotic expansion.
\begin{equation*}
u_\e = u_0 + \e v \left( \frac{x}{\e};\xi \right) + \e^2 w_\e \left(\frac{x}{\e}\right)+o(\e^2).
\end{equation*}

If $u_0$ is regular, then it is quite similar to the second polynomial $P(x) = \frac{1}{2} (x-x_0)^t M (x-x_0) + p \cdot (x-x_0) + u_0(x_0)$ near $x_0 \in \Omega$. So, we will identify $u_0$ with $P(x)$. Finally, define $\xi = \xi(x) = M(x-x_0) + p$ to simplify the notation. Then, by the calculation, we have first and second derivatives:
\begin{equation*} \begin{aligned}
D u_\e(x) &= M \cdot (x-x_0) + p + D_y v + \e M \cdot D_\xi v + \e Dw_\e \left( \frac{x}{\e} \right)+o(\e), \\
D^2 u_\e(x) &= M + \frac{1}{\e}D^2_y v + (\sum_{l} M^{il} D_{y_j}D_{\xi^l} v ) + (\sum_{l} D_{\xi^l}D_{y_i} v M^{lj} ) \\
&\quad + \e M D_\xi^2 v M + D^2 w_\e\left(\frac{x}{\e}\right) + o(1). \\
\end{aligned} \end{equation*}

From \eqref{eqn-main-1}, we have
\begin{equation} \label{eqn-cor} \begin{cases}
a_{ij}(y)\left(M + \frac{1}{\e}D^2_y v + \cdots + D^2 w_\e \right)^{ij} + c(r,x,y) = f(x,y) + o(1) \\
b^i(y) \cdot \left(M \cdot (x-x_0) + p + D_y v + \e M \cdot D_\xi v + \e Dw_\e \right)^i = o(\e) \\
\end{cases} \end{equation}

We can observe that, at a first line of the equation, there is one $\frac{1}{\e}$ order term. So, if $v$ and $w_\e$ exists and regular enough, then  $v$ should satisfy $a_{ij}(y) D_{ij} v(y) = 0$ in the interior of $\Omega_\e$. And, on the boundary, there are three $1$-order terms $M \cdot x$, $p$, and $D_{y} v$. Hence we could find a equation for $v$:
\begin{equation} \label{eqn-cor1} \begin{cases}
a_{ij}(y) D_{ij} v(y;\xi) = 0  &\text{ in } \R^n \setminus T^{\mathfrak a} \\
b^i(y) \cdot (\xi + D v(y;\xi))^i = 0 &\text{ on } \partial T^{\mathfrak a} \\
\end{cases} \end{equation}

As we discuss before, there is a periodic solution $v$ of the equation above if $(a_{ij})$ and $b^i$(or, our main equation \eqref{eqn-main-1}) satisfies the compatibility condition. 

Since the equation \eqref{eqn-cor1} is linear, $v(\frac{x}{\e},\xi)$ is linear with respect to $\xi$. that is, if $v^i$ is the solution of the equation \eqref{eqn-cor1} with $\xi =e^i$, then $v\left(\frac{x}{\e},\xi\right) = v^i\left(\frac{x}{\e}\right) \xi^i$. We are going to deduce  the properties of $v$ from $v^i$. We note that the solution of \eqref{eqn-main-1} is not unique since $v(y) +c$ is a solution of $v(y)$ is a solution. So, we assume that $v^i$ is the solution of the equation \eqref{eqn-cor1} when $\xi = e^i$ satisfying $v^i$ is nonnegative and $\min_{ \R^n \setminus T^{\mathfrak a}}{v^i} = 0$. 

From lemma \ref{lem-precom-osc} and lemma \ref{lem-precom-c2}, the $\mathcal{C}^2$ norm of $v^i$ is bounded by constant which is depend on the size of holls $\mathfrak{a}$. The following lemmas concerns about the relation between that constant $C$ and $\mathfrak{a}$.
\begin{lem} \label{lem-cor1-osc-uni}
Let $v$ be the periodic solution of the equation \eqref{eqn-cor1} with $\min_{ \R^n \setminus T^{\mathfrak a}}{v^i} = 0$ and assume that $\mathfrak{a}$ is small enough. Then we have
\begin{equation*}
\osc_{ \R^n \setminus T^{\mathfrak a}} v^i = \max_{ \R^n \setminus T^{\mathfrak a}} v^i \le C \cdot \mathfrak{a}.
\end{equation*}
where $C = C(n, \lambda, \Lambda, \mu)$.
\end{lem}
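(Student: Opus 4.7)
The plan is to rescale by the hole radius $\mathfrak{a}$, which converts the oblique Neumann data of order $1$ into data of order $\mathfrak{a}$, and then to conclude by a contradiction/compactness argument relying on a Liouville-type theorem.

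First, I would set $V(z):=v^i(\mathfrak{a}z)$. A direct computation using $\xi=e^i$ and the vanishing compatibility constant $\gamma=0$ (theorem \ref{thm-main-c}) gives
\begin{equation*}
\tilde a_{jk}(z)\,D_{jk}V(z)=0 \text{ in } \mathfrak{a}^{-1}(\R^n\setminus T^{\mathfrak a}), \qquad \tilde b^j(z)\,D_jV(z)=-\mathfrak{a}\,\tilde b^i(z) \text{ on each rescaled hole boundary},
\end{equation*}
where $\tilde a_{jk}(z):=a_{jk}(\mathfrak{a}z)$ and $\tilde b^j(z):=b^j(\mathfrak{a}z)$. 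In the $z$-variable the holes have radius $1$ and are placed at $\mathfrak{a}^{-1}\Z^n$; the change of variables preserves oscillation, $\osc V=\osc v^i$, so it suffices to show $\osc V\le C\mathfrak{a}$ for a constant depending only on $n,\lambda,\Lambda,\mu$.

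Next I would argue by contradiction. Suppose along some sequence $\mathfrak{a}_k\to 0$ and indices $i_k$ one has $M_k:=\osc V_k/\mathfrak{a}_k\to\infty$, and set $W_k(z):=(V_k(z)-\min V_k)/\osc V_k$, so that $0\le W_k\le 1$, $\osc W_k=1$, and the boundary data for $W_k$ has size $1/M_k\to 0$. The uniform $C^{2,\alpha}$ bound of lemma \ref{lem-com-e-c2}, applied in the rescaled variable on balls of fixed radius around the hole at the origin (for $k$ large, the period $\mathfrak{a}_k^{-1}$ is so big that only this one hole sits in a fixed compact set), produces a subsequence converging in $C^2_{\mathrm{loc}}(\R^n\setminus B_1)$ to a limit $W_\infty$ with $0\le W_\infty\le 1$ and $\osc W_\infty=1$. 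Since the coefficients converge uniformly on compact sets, $\tilde a_{jk}(z)\to a_{jk}(0)$ and $\tilde b^j(z)\to b^j(0)$, the limit $W_\infty$ satisfies the constant-coefficient exterior oblique problem
\begin{equation*}
a_{jk}(0)\,D_{jk}W_\infty=0 \text{ in } \R^n\setminus B_1, \qquad b^j(0)\,D_jW_\infty=0 \text{ on } \partial B_1.
\end{equation*}
A Liouville-type theorem for this problem then forces $W_\infty$ to be constant, contradicting $\osc W_\infty=1$ and completing the proof.

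The main technical obstacle is establishing this Liouville step. A clean route is to flatten the constant oblique vector field $b(0)$ by an affine change of variables, reducing to the pure Neumann problem $\partial_\nu W_\infty=0$ on a transformed hole boundary, and then perform an even reflection through that boundary to produce a bounded entire solution of a uniformly elliptic non-divergence equation; classical Liouville via iterated Krylov--Safonov Harnack then finishes. A more explicit alternative, staying in the unrescaled picture, is to re-run the barrier argument of lemma \ref{lem-precor1-osc} with the refined thickness $\delta=c_0\mathfrak{a}$, so that the barrier loss $\tfrac{K}{2}((1+c_0)^2-1)\mathfrak{a}^2$ is of order $\mathfrak{a}$ (since $K\sim 1/(\mathfrak{a}\mu)$); closing this route then requires the Harnack constant on $Q\setminus T^{(1+c_0)\mathfrak{a}}$ to remain bounded as $\mathfrak{a}\to 0$, which is the quantitative shadow of the Liouville statement.
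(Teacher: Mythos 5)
Your blow-up-and-Liouville strategy is genuinely different from the paper's argument, but as written it has two real gaps. First, the passage from $\osc W_k=1$ to $\osc W_\infty=1$ does not follow from $C^2_{\mathrm{loc}}$ convergence alone: the oscillation of $W_k$ is taken over a period cell of size $\mathfrak{a}_k^{-1}\to\infty$ in the rescaled variable, so it could a priori be carried by points escaping to infinity, in which case the limit is constant and no contradiction results. What rescues this is exactly the observation you never state: since $W_k$ solves the homogeneous interior equation and is periodic, the strong maximum principle forces its sup and inf to be attained on the hole boundaries, hence (by periodicity) on the single fixed sphere $\partial B_1(0)$, which pins $\osc W_\infty=1$. (Relatedly, a contradiction sequence need not have $\mathfrak{a}_k\to 0$; the regime $\mathfrak{a}_k\to\mathfrak{a}_\infty>0$ must be treated separately, and the compactness should come from local Schauder estimates in the rescaled fixed geometry, as in lemma \ref{lem-cor1-dv2}, rather than from lemma \ref{lem-com-e-c2}, whose constant depends on $\mathfrak{a}$ and concerns the unrescaled $\e$-regularized problem.) Second, your proposed mechanism for the Liouville step fails: no affine change of variables can turn the constant oblique vector $b(0)$ into the normal field of a compact boundary (an affine map sends a constant field to a constant field, while the normal of a sphere or ellipsoid sweeps all directions), and even reflection across a curved boundary does not preserve a non-divergence elliptic equation. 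The Liouville statement itself is true, but the correct route is different: after reducing $a(0)$ to the identity, a bounded solution in the exterior of an ellipsoid has a limit at infinity, and if it were nonconstant its extremum would be attained on the boundary, where Hopf's lemma applied in the direction $\mp b(0)$ (which is transversal by obliqueness) contradicts $b(0)\cdot DW_\infty=0$.

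Your ``explicit alternative'' is in fact essentially the paper's proof, and the point you flag as open is precisely what the paper resolves. The paper first localizes the extrema of $v^i$ on $\partial B_{\mathfrak{a}}$ by the maximum principle (so $I_0=\min v^i=0$ enters the estimate), then uses the annulus barrier of thickness $\mathfrak{a}$ with $K\sim 1/(\mathfrak{a}\mu)$ to transfer sup and inf from $\partial B_{\mathfrak{a}}$ to $\partial B_{2\mathfrak{a}}$ with an $O(\mathfrak{a})$ loss, and finally applies Harnack not on the whole perforated cell $Q\setminus T^{(1+c_0)\mathfrak{a}}$ (where the constant would indeed be delicate) but only on the sphere $\partial B_{2\mathfrak{a}}$ inside $B_{3\mathfrak{a}}\setminus B_{\mathfrak{a}}$: after the rescaling $\widetilde v(z)=v^i(\mathfrak{a}z)$ this is the fixed annulus $B_3\setminus B_1$ (valid once $\mathfrak{a}<1/6$ so no other hole interferes), so the Harnack constant depends only on $n,\lambda,\Lambda$ by scale invariance of Krylov--Safonov. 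Combining $S_0\le S_1+C\mathfrak{a}/\mu$, $S_1\le C(n,\lambda,\Lambda)I_1$, $I_1\le I_0+C\mathfrak{a}/\mu$ and $I_0=0$ gives $S_0\le C(n,\lambda,\Lambda,\mu)\mathfrak{a}$ directly, with no compactness or Liouville theorem needed; if you want to keep your qualitative route, you must supply the maximum-principle localization of the oscillation and a correct proof of the exterior Liouville property along the lines above.
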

\begin{proof}
We are going to assume $i=1$ without losing generality. Let $Q$ be a unit cell of $\R^n$ whose center is $0$ and $B_\mathfrak{a} = Q \bigcap T^{\mathfrak a}$. Without losing any loss of generality, we may assume that the center of $Q$ and $B_\mathfrak{a}$ is 0. Then, since $v^1$ satisfies
\begin{equation} \begin{cases}
a_{ij}(y) D_{ij} v^1(y) = 0  &\text{ in } \R^n \setminus T^{\mathfrak a} \\
b^i(y) \cdot (e_1 + D v^1(y))^i = 0 &\text{ on } \partial T^{\mathfrak a}, \\
\end{cases} \end{equation}
the maximum and minimum should be achieved at a boundary point from the maximum principle (in \cite{GT}). So,
\begin{equation*} \begin{aligned}
S_0 &= \sup_{\R^n \setminus T^{\mathfrak a}} v^1 = \sup_{\partial T^{\mathfrak a}} v^1 = \sup_{\partial B_\mathfrak{a}} v^1 \\
I_0 &= \inf_{\R^n \setminus T^{\mathfrak a}} v^1 = \inf_{\partial T^{\mathfrak a}} v^1 = \inf_{\partial B_\mathfrak{a}} v^1.
\end{aligned} \end{equation*}

By the definition of $v^1$, $v^1$ is nonnegative and $I_0=0$. Let $S_1 = \sup_{\partial B_{2\mathfrak{a}}} v^1$, $I_1 = \inf_{\partial B_{2\mathfrak{a}}} v^1$ and
\begin{equation*}
h^+ = -\displaystyle\frac{1}{\mathfrak{a} \mu} ( |y|^2 - (2\mathfrak{a})^2 ) + S_1.
\end{equation*}
Then, $h^+$ satisfies
\begin{equation*} \begin{cases}
a_{ij}(y) D_{ij} h^+ \le -\displaystyle\frac{2 n \lambda}{\mathfrak{a} \mu} \le 0  &\text{ in } B_{2\mathfrak{a}} \setminus B_\mathfrak{a} \\
b^i(y) D_i h^+ = b^i(y) \left(\displaystyle\frac{2}{\mu} \nu\right)^i = 2 \ge \left( b^i(y) e^i_1\right) \ge b^i(y) D_iv^1(y) &\text{ on } \partial B_\mathfrak{a} \\
h^+ \ge v^1 &\text{ on } \partial B_{2\mathfrak{a}},
\end{cases} \end{equation*}
and then we have
\begin{equation*}
S_0 =\max_{B_{2\mathfrak{a}} \setminus B_\mathfrak{a}} v^1 \le \max_{B_{2\mathfrak{a}} \setminus B_\mathfrak{a}} h^+ = S_1 + \displaystyle \frac{\mathfrak{a}}{2 \mu_1}
\end{equation*}
from the comparison.
Similarly,  we can show
\begin{equation*}
I_0 \ge I_1 - \displaystyle \frac{\mathfrak{a}}{2 \mu}.
\end{equation*}

Let $\widetilde{v}(z) = v^1(\mathfrak{a}z)$. Then $\widetilde{v}$ is nonnegative and satisfies
\begin{equation*}
a_{ij}(\mathfrak{a}z)D_{ij}\widetilde{v}(z) = 0 \text{ in } B_3(0) \setminus B_1(0)
\end{equation*}
whenever $\mathfrak{a} < \frac{1}{6}$.

By applying the Harnack estimate (in \cite{CC}) on $\widetilde{v}$ in $B_3$, we have
\begin{equation*}
\sup_{\partial B_2(0)} \widetilde{v} \le C(n,\lambda, \Lambda) \inf_{\partial B_2(0)} \widetilde{v},
\end{equation*}
which implies  $S_1 \le C(n,\lambda, \Lambda) I_1$.

Now combining these three results, we have
\begin{equation*} \begin{aligned}
S_0 &\le S_1 + \displaystyle \frac{\mathfrak{a}}{2 \mu} \le C(n,\lambda, \Lambda) I_1 + \displaystyle \frac{\mathfrak{a}}{2 \mu}\\
&\le C(n,\lambda, \Lambda) \left( I_0 + \displaystyle \frac{\mathfrak{a}}{2 \mu} \right) + \displaystyle \frac{\mathfrak{a}}{2 \mu} \le C(n,\lambda, \Lambda, \mu) \mathfrak{a}.
\end{aligned} \end{equation*}
\end{proof}

\begin{lem}[Interior estimate of $Dv$] \label{lem-cor1-dv1}
Let $v^i(y)$ be the solution of the equation \eqref{eqn-cor1} with $\xi = e^i$. And suppose that the coefficient functions $(a_{ij})$ and $b^i$ satisfies $\| a_{ij} \|_{C^\alpha(\R^n \setminus T^{\mathfrak a})} + \| b^i(\mathfrak{a}~\cdot) \|_{ C^{1,\alpha}(\frac{1}{\mathfrak{a}}(\R^n \setminus T^{\mathfrak a}))} \le \Lambda$. Then, we have the following estimate
\begin{equation*}
d(y) |Dv^i(y)| + d(y)^2 |D^2v^i(y)| + \min\left( d(y_1),d(y_2) \right)^{2+\alpha} \displaystyle\frac{ \left| D^2 v^i(y_1) - D^2 v^i(y_2) \right| }{|y_1- y_2|} \le C \mathfrak{a}
\end{equation*}
where $C=C(n, \lambda, \Lambda, \mu)$ is the same as in \ref{lem-cor1-osc-uni} and $d(y) = d(y,T^{\mathfrak a})$ is a distance between $y$ and $T^{\mathfrak a}$. In particular,
\begin{equation*}
|Dv^i(y)| + \mathfrak{a} |D^2v^i(y)| + \mathfrak{a}^{1+\alpha} \displaystyle\frac{ \left| D^2 v^i(y_1) - D^2 v^i(y_2) \right| }{|y_1- y_2|} \le C
\end{equation*}
if $y, y_1, y_2 \in \partial B_{2\mathfrak{a}}$.
\end{lem}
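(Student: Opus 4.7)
The plan is to combine the oscillation estimate from Lemma \ref{lem-cor1-osc-uni} with a standard rescaling argument and interior $C^{2,\alpha}$ Schauder estimates for the linear equation $a_{ij}(y) D_{ij} v^i = 0$, which holds throughout the interior of $\R^n \setminus T^{\mathfrak a}$.

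Fix a point $y_0 \in \R^n \setminus T^{\mathfrak a}$ and set $r = d(y_0, T^{\mathfrak a})$, so that $B_r(y_0) \subset \R^n \setminus T^{\mathfrak a}$ and on this ball $v^i$ satisfies the purely interior equation (the Neumann data on $\partial T^{\mathfrak a}$ plays no role). Define the rescaled function $\tilde v(z) := v^i(y_0 + rz)$ on $B_1(0)$. Since $v^i \ge 0$ with $\min v^i = 0$, Lemma \ref{lem-cor1-osc-uni} gives $\|\tilde v\|_{L^\infty(B_1)} \le C\mathfrak{a}$, while $\tilde v$ solves $\tilde a_{ij}(z) D_{ij} \tilde v = 0$ in $B_1$ with $\tilde a_{ij}(z) = a_{ij}(y_0 + rz)$. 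The hypothesis $\|a_{ij}\|_{C^\alpha} \le \Lambda$ gives $\|\tilde a_{ij}\|_{C^\alpha(B_1)}$ bounded independently of $r$ and $\mathfrak{a}$.

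Applying the standard interior Schauder estimate (cf.\ \cite{GT}, \cite{CC}) to $\tilde v$ on $B_{1/2}$ yields $\|\tilde v\|_{C^{2,\alpha}(B_{1/2})} \le C \|\tilde v\|_{L^\infty(B_1)} \le C\mathfrak{a}$ with $C = C(n,\lambda,\Lambda)$. Unwinding the rescaling through the chain rule then produces
\begin{equation*}
r |Dv^i(y_0)| + r^2 |D^2 v^i(y_0)| + r^{2+\alpha}\, [D^2 v^i]_{C^\alpha(B_{r/2}(y_0))} \le C \mathfrak{a},
\end{equation*}
which, taking $r = d(y_0)$, gives the pointwise parts of the claimed inequality. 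For the Hölder term, given $y_1, y_2$ with $r_* = \min(d(y_1), d(y_2))$, the case $|y_1 - y_2| \le r_*/2$ follows from the Schauder seminorm on $B_{r_*/2}$ centered at the closer point; the case $|y_1 - y_2| > r_*/2$ reduces to the pointwise $D^2$ bound combined with the triangle inequality. The "in particular" statement then comes from the observation that for $y \in \partial B_{2\mathfrak a}$ one has $d(y, T^{\mathfrak a}) = |y| - \mathfrak{a} = \mathfrak{a}$ (using $\mathfrak{a} < 1/2$ so competing holes lie at distance $\ge 1 - 3\mathfrak{a} > \mathfrak{a}$).

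The main obstacle is modest: one must verify that the rescaling preserves the Schauder constants uniformly in $\mathfrak{a}$ and in the distance $r$, which is automatic because the $C^\alpha$ bound on $a_{ij}$ is preserved (in fact improved by a factor $r^\alpha \le 1$) under the rescaling $z \mapsto y_0 + rz$. No compactness or iteration is needed; the Neumann boundary condition enters only implicitly through the oscillation bound of Lemma \ref{lem-cor1-osc-uni}.
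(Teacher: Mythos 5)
Your argument is correct and is essentially the paper's own proof: the paper disposes of this lemma in one line by invoking Lemma \ref{lem-cor1-osc-uni} together with the standard interior $\mathcal{C}^{2,\alpha}$ estimates of Chapter 6 in \cite{GT}, and your rescaled Schauder argument on $B_{d(y_0)}(y_0)$ (oscillation bound $\le C\mathfrak{a}$, scale to $B_1$, interior estimate, unwind) is precisely the content of those distance-weighted interior estimates. The only nitpick is the ``in particular'' step: $d(y)=\mathfrak{a}$ on $\partial B_{2\mathfrak{a}}$ requires $\mathfrak{a}\le 1/4$ rather than $\mathfrak{a}<1/2$, which is harmless since the lemma already inherits the smallness assumption on $\mathfrak{a}$ from Lemma \ref{lem-cor1-osc-uni}.
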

\begin{proof}
It follows from lemma \ref{lem-cor1-osc-uni} and the standard interior $\mathcal{C}^{2,\alpha}$ estimate(See chapter 6 of \cite{GT}).
\end{proof}

\begin{lem}[Global estimaete of $Dv$] \label{lem-cor1-dv2}
Let $v^i(y)$ be the solution of the equation \eqref{eqn-cor1} with $\xi =e^i$. And suppose that the coefficient functions $(a_{ij})$ and $b^i$ satisfies $\| a_{ij} \|_{C^\alpha(\R^n \setminus T^{\mathfrak a})} + \| b^i(\mathfrak{a}~\cdot) \|_{ C^{1,\alpha}(\frac{1}{\mathfrak{a}}(\R^n \setminus T^{\mathfrak a}))} \le \Lambda$. Then, we have
\begin{equation}
\| Dv^i(y) \|_{L^\infty(\R^n \setminus T^{\mathfrak a})} \le C_1
\end{equation}
where $C_1=C_1(n, \lambda, \Lambda, \mu)>0$.
\end{lem}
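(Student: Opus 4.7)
The plan is to upgrade the interior gradient bound from Lemma \ref{lem-cor1-dv1} to a uniform global one by supplying the missing boundary-layer estimate near $\partial T^{\mathfrak a}$. By periodicity it suffices to bound $|Dv^i|$ on one punctured unit cell $Q \setminus B_\mathfrak{a}$. Lemma \ref{lem-cor1-dv1} already yields $|Dv^i(y)| \le C\mathfrak{a}/d(y) \le C$ whenever $d(y) \ge \mathfrak{a}$, i.e., outside $B_{2\mathfrak{a}}$. The only region left uncovered is the thin annulus $B_{2\mathfrak{a}} \setminus B_\mathfrak{a}$, and this is the main obstacle: a direct $C^{2,\alpha}$ boundary estimate for $v^i$ itself would blow up as $\mathfrak{a}\to 0$ because both the small-scale $d(y)$ and the $C^{1,\alpha}$ norm of $b^i$ on $\partial B_\mathfrak{a}$ degenerate in tandem.

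The key device is a blow-up rescaling by $\mathfrak{a}$. Define $\widetilde v(z) = v^i(\mathfrak{a} z)$ on $B_3 \setminus B_1$ (valid for $\mathfrak{a}$ small, since then $B_3$ contains only the single rescaled hole $B_1$). By Lemma \ref{lem-cor1-osc-uni} and the normalization $\min v^i = 0$, $\|\widetilde v\|_{L^\infty(B_3 \setminus B_1)} \le C\mathfrak{a}$. The rescaled coefficients $\widetilde a_{ij}(z) := a_{ij}(\mathfrak{a} z)$ retain the same ellipticity constants and have Hölder seminorm $[\widetilde a_{ij}]_{C^\alpha(B_3)} = \mathfrak{a}^\alpha[a_{ij}]_{C^\alpha} \le \Lambda$; crucially, the hypothesis $\|b^i(\mathfrak{a}\cdot)\|_{C^{1,\alpha}(\frac{1}{\mathfrak{a}}(\R^n \setminus T^{\mathfrak a}))}\le \Lambda$ is exactly the statement that $\widetilde b^i(z) := b^i(\mathfrak{a} z)$ is uniformly in $C^{1,\alpha}$. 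In rescaled form the equation reads
\begin{equation*}
\widetilde a_{ij}(z) D_{ij}\widetilde v(z) = 0 \ \text{ in } B_3 \setminus B_1, \qquad \widetilde b^i(z) D_i\widetilde v(z) = -\mathfrak{a}\,\widetilde b^1(z) \ \text{ on } \partial B_1,
\end{equation*}
whose oblique boundary datum is $O(\mathfrak{a})$ in $C^{1,\alpha}$.

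I would then invoke the boundary $C^{2,\alpha}$ estimate of Lieberman--Trudinger type---the same machinery underlying Lemma \ref{lem-precom-c2}---applied on the fixed annular geometry $B_2 \setminus B_1$. Since all relevant data (ellipticity, Hölder norms of $\widetilde a_{ij}$ and $\widetilde b^i$, the obliqueness constant $\mu$, and the smoothness of $\partial B_1$) are now independent of $\mathfrak{a}$, the constant in the estimate is too, giving
\begin{equation*}
\|\widetilde v\|_{C^{2,\alpha}(\overline{B_2 \setminus B_1})} \le C\bigl( \|\widetilde v\|_{L^\infty(B_3 \setminus B_1)} + \mathfrak{a}\,\|\widetilde b^1\|_{C^{1,\alpha}} \bigr) \le C\mathfrak{a}.
\end{equation*}
Unscaling through $Dv^i(y) = \mathfrak{a}^{-1} D\widetilde v(y/\mathfrak{a})$ yields $|Dv^i(y)| \le C$ on $\overline{B_{2\mathfrak{a}} \setminus B_\mathfrak{a}}$, which together with the interior bound from Lemma \ref{lem-cor1-dv1} completes the global bound $\|Dv^i\|_{L^\infty(\R^n \setminus T^{\mathfrak a})} \le C_1(n,\lambda,\Lambda,\mu)$.

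The subtle part is verifying the $\mathfrak{a}$-independence of the boundary $C^{2,\alpha}$ constant; this is precisely why the hypothesis on $b^i$ is stated in the intrinsic rescaled form, so that the blow-up argument is uniform and the boundary layer cancels the $\mathfrak{a}$-factor introduced by the chain rule.
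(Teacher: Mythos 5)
Your proof is correct and follows essentially the same route as the paper: away from the hole the interior estimate of Lemma \ref{lem-cor1-dv1} gives the bound, and near the hole one rescales by the factor $\mathfrak{a}$ and applies the oblique-derivative Schauder estimate on a fixed annulus, where all the relevant data are $\mathfrak{a}$-independent. The only cosmetic difference is the normalization: the paper takes $\widetilde v(z)=\frac{1}{\mathfrak{a}}v^i(\mathfrak{a}z)$ (gradient-preserving) and controls the outer Dirichlet trace in $C^{2,\alpha}$ via Lemma \ref{lem-cor1-dv1}, while you keep $\widetilde v(z)=v^i(\mathfrak{a}z)$, use $\|\widetilde v\|_{L^\infty}\le C\mathfrak{a}$ from Lemma \ref{lem-cor1-osc-uni} together with a local estimate on the larger annulus, and let the chain-rule factor $\mathfrak{a}^{-1}$ cancel the $\mathfrak{a}$.
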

\begin{proof}
Let $Q$ be a unit cell of $\R^n$ whose center is $0$ and $B_\mathfrak{a} = B_\mathfrak{a}(0) = Q \cap T^{\mathfrak a}$. We will show $\sup_{Q \setminus B_\mathfrak{a}} |D v^i|$ is bounded. From lemma \ref{lem-cor1-dv1}, $|Dv^i|$ in bounded in $Q \setminus B_{2\mathfrak{a}}$, so we just need to show that the gradient is bounded in $B_{2\mathfrak{a}} \setminus B_\mathfrak{a}$. Let us define the scaled function $\widetilde{v} =\displaystyle\frac{1}{\mathfrak{a}} v^i(\mathfrak{a}z)$. Then $\widetilde{v}$ satisfies the following equation
\begin{equation*} \begin{cases}
a_{ij}(\mathfrak{a}z) D_{ij} \widetilde{v}(z) = 0 &\text{ in } B_2 \setminus B_1 \\
b^i(\mathfrak{a}z) \left( e^i + D \widetilde{v}(z) \right)^i  = 0 &\text{ on } \partial B_1 \\
\widetilde{v}_\e = \phi(z) &\text{ on } Q \setminus B_2. \\
\end{cases} \end{equation*}
where $\phi(z) = \displaystyle\frac{1}{\mathfrak{a}} v^i(\mathfrak{a}z)$.

Since $[a_{ij}(\mathfrak{a}~\cdot)]_{C^\alpha(B_2 \setminus B_1)} = \mathfrak{a}^\alpha [a_{ij}(\cdot)]_{C^\alpha(B_{2\mathfrak{a}} \setminus B_\mathfrak{a})}$, $[a_{ij}(\mathfrak{a}~\cdot)]_{C^\alpha(B_2 \setminus B_1)} + [b^i(\mathfrak{a}~\cdot)]_{C^{1,\alpha}(B_2 \setminus B_1)} \le \Lambda$. Additionally, since $[\phi]_{C^{2,\alpha}(\partial B_2)}$ is bounded independently on $\mathfrak{a}$ from lemma \ref{lem-cor1-dv1}, we have the following estimate
\begin{equation*}
[\widetilde{v}]_{C^{2,\alpha}(B_2 \setminus B_1)} \le C(n,\Lambda,\lambda,\mu) \left( [b^i(\mathfrak{a}~\cdot)]_{C^{1,\alpha}(\partial B_2)} + [\phi]_{C^{2,\alpha}(\partial B_2)} \right) \le C_1(n,\Lambda,\lambda,\mu)
\end{equation*}
by using the estimate in \cite{GT}.
Especially, $|D\widetilde{v}(z)| = |Dv^i(y)|$ is bounded by $C_1$ whenever $y \in B_{2\mathfrak{a}} \setminus B_\mathfrak{a}$.
\end{proof}

\section{Second Corrector and Uniformly Ellipticity of $\overline{L}$}\label{sec-cor2 and ellipticity}
In this section, we define the effective equation $\overline{L}$ by finding the second corrector.
And, we prove two important properties of $\overline{L}$: the uniform ellipticity and continuity of the effective equation $\overline{L}$. Throughout this section, we assume that \eqref{eqn-main-1} satisfies the compatibility condition and condition I in chapter 1 hold.
\subsection{The Existence of Second Corrector and Effective Equation.}
Let us define $V(y)$ as a $\R^n$-valued function whose components are $v^i(y)$ and $\xi = \xi(x) = M(x-x_0) +p$ for a given vector $p \in \R^n$ and a symmetric matrix $M$. Additionally, let us define a matrix $Z(y,M)$ as
\begin{equation} \label{eqn-cor2-bddz}
Z_{ij}(y,M) = \sum_{l} M^{il} D_j v^l(y) + \sum_{l}D_i v^l(y) M^{lj}. \\
\end{equation}

From lemma \ref{lem-cor1-dv2}, we can deduce the following lemma.
\begin{lem} \label{lem-cor2-bddz}
For any given a symmetric matrix $M$ and a point $y$ in $\R^n \setminus T^{\mathfrak a}$, the following estimate holds:
\begin{equation*}
\left| Z(y,M)\right| \le C(n,\lambda,\Lambda,\mu) \| M \|.
\end{equation*}
\end{lem}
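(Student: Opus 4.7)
The plan is essentially a direct bookkeeping argument, unpacking the definition of $Z$ and invoking the uniform gradient estimate of Lemma \ref{lem-cor1-dv2}. Since the first correctors $v^l$ were introduced so that $v(y;\xi) = v^l(y)\xi^l$ is linear in $\xi$, the matrix $Z(y,M)$ is nothing more than $M \cdot DV(y) + (DV(y))^t \cdot M$, where $DV$ is the Jacobian matrix whose $(i,l)$ entry is $D_i v^l(y)$. The whole task is therefore to combine a pointwise bound on $DV$ with the operator/entrywise norm of $M$.

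First I would recall that by Lemma \ref{lem-cor1-dv2}, each component $D_j v^l(y)$ is bounded in $L^\infty(\R^n \setminus T^{\mathfrak a})$ by the constant $C_1 = C_1(n,\lambda,\Lambda,\mu)$, uniformly in $y$ and in particular independently of $\mathfrak{a}$. Hence the entries of the Jacobian $DV(y)$ are uniformly bounded by $C_1$, which translates to a bound $\|DV(y)\| \le n C_1$ in any fixed matrix norm (up to equivalence constants depending only on $n$).

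Next I would fix $y$ and a symmetric matrix $M$, and estimate each entry of $Z$ directly from \eqref{eqn-cor2-bddz}:
\begin{equation*}
|Z_{ij}(y,M)| \le \sum_{l} |M^{il}|\,|D_j v^l(y)| + \sum_{l} |D_i v^l(y)|\,|M^{lj}| \le 2nC_1\,\|M\|,
\end{equation*}
using $|M^{il}|,|M^{lj}| \le \|M\|$. Summing over the $n^2$ entries (or switching to an equivalent norm on $n\times n$ matrices) yields $|Z(y,M)| \le C(n,\lambda,\Lambda,\mu)\,\|M\|$ with $C$ depending only on the quantities listed.

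There is really no obstacle here; the content of the lemma lies entirely in the uniform gradient bound provided by Lemma \ref{lem-cor1-dv2}, and the present statement is the packaging of that bound needed in the construction of the second corrector. The only point worth flagging is that the constant is genuinely independent of $\mathfrak{a}$, since Lemma \ref{lem-cor1-dv2} was established with that independence in mind; this is important later when $\mathfrak{a}$ is allowed to be small.
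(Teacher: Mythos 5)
Your argument is correct and is exactly what the paper intends: the paper states the lemma as a direct consequence of Lemma \ref{lem-cor1-dv2} (the uniform bound $\|Dv^l\|_{L^\infty} \le C_1(n,\lambda,\Lambda,\mu)$), and your entrywise estimate of $Z_{ij}(y,M)$ from the definition \eqref{eqn-cor2-bddz} is just that deduction written out. Your remark that the constant is independent of $\mathfrak{a}$ is also consistent with the paper's use of the lemma in the ellipticity argument.
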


Now, let us apply $v(y,\xi) = V(y) \cdot \xi(x)$ to the equation \eqref{eqn-cor} . Then we have the following:
\begin{equation} \label{eqn-cor2} \begin{cases}
a_{ij}(y)\left( M + Z(y,M) + D^2w_\e(y) \right)_{ij}+c(r,x,y) = f(x,y) + o(1) \\
b^i(y) \cdot \left( \sum_{l} M^{il} v^l + D_i w_\e \right) = o(1).  \\
\end{cases} \end{equation}

We note that our second corrector $w_\e$ should satisfy the above equation. We add the auxiliary term $-\e^2 w_\e$  to the interior equation and $\e^2 w_\e$ to the boundary equation to guarantee the existence of second corrector. Then, we have the following equation about $y$ variable for fixed $x=x_0$ and $r=r_0$ .
\begin{equation} \label{eqn-cor2e} \begin{cases}
-\e^2 w_\e(y) + a_{ij}(y)\left( M + Z(y,M) + D^2w_\e(y) \right)_{ij}+c(r_0,x_0,y) = f(x_0,y) &\text{ in } \R^n \setminus T^{\mathfrak a} \\
b^i(y) \cdot \left( \sum_{l} M^{il}v^l + D_i w_\e(y) \right)+\e^2 w_\e(y) = 0 &\text{ on } \partial T^{\mathfrak a}
\end{cases} \end{equation}

From lemma \ref{lem-cor2-bddz} $Z(y,M)$ is bounded. So, the equation \eqref{eqn-cor2e} is well defined. And, by adding the auxiliary term, we can find a bounded viscosity solution for each $\e$ and we also can prove the comparison principle like Lemma \ref{lem-precom-cp}. Since the proof of comparison principle is  similar to that of lemma \ref{lem-precom-cp}, we just state it without proof.
\begin{lem}(Comparison) \label{lem-cor2-com}
Suppose that $w^+$ is a super-solution of \eqref{eqn-cor2e} and $w^-$ is a sub-solution of \eqref{eqn-cor2e} for fixed $M$, $a$, $r_0$, $x_0$ and $\e$. Then we have
\begin{equation*}
w^+ \ge w^-.
\end{equation*}
\end{lem}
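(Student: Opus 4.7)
The plan is to follow, almost word for word, the proof of Lemma \ref{lem-precom-cp}, exploiting the fact that the only terms in \eqref{eqn-cor2e} that actually involve $w$ are the linear pieces $-\e^2 w + a_{ij}(y) D_{ij} w$ in the interior and $b^i(y) D_i w + \e^2 w$ on the boundary. The remaining data $a_{ij}(y)(M + Z(y,M))_{ij} + c(r_0,x_0,y) - f(x_0,y)$ in the interior and $b^i(y) \sum_l M^{il} v^l(y)$ on the boundary are fixed $y$-dependent inhomogeneities that do not contain $w$, so they cancel when one subtracts the super-solution inequality from the sub-solution inequality. Note that $Z(y,M)$ is bounded by Lemma \ref{lem-cor2-bddz} and the first correctors $v^l$ are globally Lipschitz by Lemma \ref{lem-cor1-dv2}, so every term is well defined and bounded.

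I would argue by contradiction. If $w^+(y_0) < w^-(y_0)$ at some $y_0$, boundedness and periodicity produce a smallest constant $c_1 > 0$ with $w^+ + c_1 \ge w^-$ on $\R^n \setminus T^{\mathfrak a}$, attained at some touching point $y_1$; by periodicity we may take $y_1$ in a single cell. If $y_1 \in \partial T^{\mathfrak a}$, the touching of $w^-$ by $w^+ + c_1$ from above on a smooth sphere, together with the oblique condition $b \cdot \nu > 0$, forces $b^i(y_1) D_i w^-(y_1) \ge b^i(y_1) D_i w^+(y_1)$ exactly as in Lemma \ref{lem-precom-cp}. Combined with $w^-(y_1) = w^+(y_1) + c_1$ this yields
\begin{equation*}\begin{aligned}
0 &\ge b^i(y_1)\left(\sum_l M^{il} v^l(y_1) + D_i w^-(y_1)\right) + \e^2 w^-(y_1) \\
  &\ge b^i(y_1)\left(\sum_l M^{il} v^l(y_1) + D_i w^+(y_1)\right) + \e^2 w^+(y_1) + \e^2 c_1 \ge \e^2 c_1 > 0,
\end{aligned}\end{equation*}
a contradiction. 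If instead $y_1$ is interior, subtracting the two interior viscosity inequalities shows that $\phi := w^- - w^+$ is a viscosity subsolution of $a_{ij}(y) D_{ij} \phi - \e^2 \phi \ge 0$. But $\phi$ attains its maximum $c_1 > 0$ at the interior point $y_1$, so $D\phi(y_1) = 0$ and $D^2 \phi(y_1) \le 0$ in the viscosity sense, whence ellipticity gives $a_{ij}(y_1) D_{ij} \phi(y_1) \le 0$, contradicting $a_{ij} D_{ij} \phi \ge \e^2 \phi = \e^2 c_1 > 0$ at $y_1$.

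The $-\e^2 w$ term in the interior equation of \eqref{eqn-cor2e} is actually a simplification: it gives strict monotonicity of the operator in $w$, so the interior contradiction is immediate and one does not even need the strong maximum principle used in Lemma \ref{lem-precom-cp}. The only genuine technical point is legitimising the subtraction of viscosity sub- and super-solutions at $y_1$, which is dispatched by the standard doubling-of-variables / theorem-of-sums argument of \cite{CIL,CC}, exactly as in the closing paragraph of Lemma \ref{lem-precom-cp}; this is the main, but entirely routine, obstacle.
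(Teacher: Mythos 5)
Your proof is correct and is essentially the paper's intended argument: the paper omits the proof, saying only that it is ``similar to lemma \ref{lem-precom-cp}'', and your adaptation --- the minimal-constant touching argument, the boundary case yielding $0 \ge \e^2 c_1 > 0$ via the oblique condition, the interior case (made even easier by the strict properness from the $-\e^2 w$ term), with the viscosity-theoretic subtraction deferred to the standard theorem-of-sums argument of \cite{CIL,CC} --- is exactly that adaptation. The only caveat is that, as in lemma \ref{lem-precom-cp}, the statement should be read for bounded periodic sub- and super-solutions, which you implicitly (and correctly) assume when extracting the touching point $y_1$.
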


\begin{lem} \label{lem-cor2-exist}
For each $M$, $\mathfrak{a}$, $r_0$, $x_0$ and $\e$, there is a periodic solution $w_\e(y;M,r_0,x_0)$ of the equation \eqref{eqn-cor2e} satisfying
\begin{equation*}
\| \e^2 w_\e \|_{L^\infty(\R^n \setminus T^{\mathfrak a})} \le C \left( \|M\| + \|c(r_0,x_0,\cdot)\|_{L^\infty(\R^n \setminus T^{\mathfrak a})} + \| f(x_0,\cdot) \|_{L^\infty(\R^n \setminus T^{\mathfrak a})} \right)
\end{equation*}
where $C$ is a constant depending only on $n$, $\lambda$, $\Lambda$, $\mu$ and $\mathfrak{a}$.
\end{lem}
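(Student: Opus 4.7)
The plan is to mimic the combined argument of Lemmas \ref{lem-precom-ex} and \ref{lem-precom-bdd}: build explicit periodic barriers and produce $w_\e$ by Perron's method, using the comparison principle Lemma \ref{lem-cor2-com}. The central observation is that the auxiliary term $-\e^2 w_\e$ in \eqref{eqn-cor2e} now sits in the \emph{interior} equation (not only on the boundary as in \eqref{eqn-precom}), so constant functions suffice as barriers and the desired factor $\e^{-2}$ drops out automatically from the sandwich.

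First I would bound the inhomogeneities uniformly. By Lemma \ref{lem-cor2-bddz} one has $\|Z(\cdot,M)\|_{L^\infty}\le C(n,\lambda,\Lambda,\mu)\|M\|$; by Lemma \ref{lem-cor1-osc-uni} each first corrector satisfies $\|v^l\|_{L^\infty}\le C\mathfrak{a}$; and $\|b\|_{L^\infty}\le 1$ by condition I(\ref{con-oblique}). Therefore, setting
\begin{equation*}
K := C_*(n,\lambda,\Lambda,\mu,\mathfrak{a})\bigl(\|M\| + \|c(r_0,x_0,\cdot)\|_{L^\infty(\R^n\setminus T^{\mathfrak a})} + \|f(x_0,\cdot)\|_{L^\infty(\R^n\setminus T^{\mathfrak a})}\bigr)
\end{equation*}
with $C_*$ sufficiently large, the constants $w_\pm := \pm K/\e^2$ serve respectively as a super- and sub-solution of \eqref{eqn-cor2e}: for $w_+$, the interior residual reduces to $-K + a_{ij}(M+Z)_{ij} + c - f$ and the boundary residual to $b^i\sum_l M^{il}v^l + K$, both of the correct sign for a viscosity super-solution in the sense of Definition \ref{def-viscositysol} by our choice of $K$, and symmetrically for $w_-$. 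Both barriers are smooth and $\Z^n$-periodic.

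Next I would define
\begin{equation*}
w_\e(y) := \inf\{\,h(y) : h \text{ is a bounded periodic continuous viscosity super-solution of \eqref{eqn-cor2e} with } h \ge w_-\,\},
\end{equation*}
in direct analogy with Lemma \ref{lem-precom-ex}. The family is nonempty since $w_+$ belongs to it, so $w_\e$ is well-defined and satisfies $w_- \le w_\e \le w_+$ pointwise; the standard Ishii-type envelope/stability argument promotes $w_\e$ to a viscosity solution of \eqref{eqn-cor2e}; and the $\Z^n$-invariance of the defining family yields periodicity (the same argument as in Lemma \ref{lem-precom-ex}). The sandwich then converts into $\|\e^2 w_\e\|_{L^\infty}\le K$, which is the stated estimate. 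The only genuinely non-routine point I would verify, rather than invoke, is that $Z(y,M)$ is continuous up to $\partial T^{\mathfrak a}$ so that the viscosity existence machinery and Lemma \ref{lem-cor2-com} apply with continuous data; this is immediate from the $\mathcal{C}^{2,\alpha}$-regularity of the first correctors $v^l$ furnished by Lemma \ref{lem-com-e-c2}, together with the global derivative bound of Lemma \ref{lem-cor1-dv2}.
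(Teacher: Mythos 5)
Your proposal is correct and follows essentially the same route as the paper: the paper likewise defines $K$ as a multiple of $\|M\|+\|c(r_0,x_0,\cdot)\|_\infty+\|f(x_0,\cdot)\|_\infty$ (controlling $Z$ and $V$ via Lemmas \ref{lem-cor2-bddz} and \ref{lem-cor1-osc-uni}), uses the constants $\pm K/\e^2$ as super- and sub-solutions of \eqref{eqn-cor2e}, and obtains the periodic solution between them by the Perron/comparison argument of Lemmas \ref{lem-precom-ex}--\ref{lem-precom-cp}, which yields the bound $\|\e^2 w_\e\|_{L^\infty}\le K$.
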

\begin{proof}

For fixed $\mathfrak{a}$, let us define
\begin{equation*}
K = n \Lambda \sum_{ij} \left( \| M \| + \| Z_{ij}(\cdot,M) \|_\infty + \|c(r_0,x_0,\cdot)\|_\infty + \| f(x_0,\cdot) \|_\infty +2 \| V \|_\infty \| M \|  \right).
\end{equation*}
Then, from lemma \ref{lem-cor2-bddz} and \ref{lem-cor1-osc-uni}, we have
\begin{equation} \begin{aligned}
K &\le C(n,\lambda,\Lambda) \left( \| M \| + \sum_{ij}\| Z_{ij}(\cdot,M) \|_\infty + \|c(r_0,x_0,\cdot)\|_\infty + \| f(x_0,\cdot) \|_\infty +2 \| V \|_\infty \| M \|  \right) \\
&\le C(n,\lambda,\Lambda,\mu) \left( \|M\| + \|c(r_0,x_0,\cdot)\|_{L^\infty(\R^n \setminus T^{\mathfrak a})} + \| f(x_0,\cdot) \|_{L^\infty(\R^n \setminus T^{\mathfrak a})} \right).
\end{aligned} \end{equation}
And, from the definition of $K$, $w^+ = \frac{1}{\e^2} K$ and $w^- = - \frac{1}{\e^2} K$ are super and sub-solution of \eqref{eqn-cor2e} respectively.
So, from the similar reason in lemma \ref{lem-precom-cp}, we can find the solution $w_\e$ of \eqref{eqn-cor2e} which satisfies
\begin{equation*}
\| \e^2 w_\e \|_{L^\infty(\R^n \setminus T^{\mathfrak a})} \le C(n,\lambda,\Lambda,\mu ) \left( \|M\| + \|c(r_0,x_0,\cdot)\|_{L^\infty(\R^n \setminus T^{\mathfrak a})} + \| f(x_0,\cdot) \|_{L^\infty(\R^n \setminus T^{\mathfrak a})} \right).
\end{equation*}
\end{proof}

\begin{lem} \label{lem-cor2-osc}
The solution $w_\e$ of the equation \eqref{eqn-cor2e} satisfies
\begin{equation}
\osc w_\e \le C \left( \|M\| + \|c(r_0,x_0,\cdot)\|_{L^\infty(\R^n \setminus T^{\mathfrak a})} + \| f(x_0,\cdot) \|_{L^\infty(\R^n \setminus T^{\mathfrak a})} \right).
\end{equation}
where $C$ depends only on $n$, $\lambda$, $\Lambda$ and $\mu$.
\end{lem}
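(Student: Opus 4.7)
The plan is to mirror the proof of Lemma \ref{lem-precom-osc}, rewriting \eqref{eqn-cor2e} so that the extra first-corrector contributions and the zeroth-order perturbation are absorbed into the right-hand sides. I would first recast the interior equation as
\[
a_{ij}(y) D_{ij} w_\e(y) - \e^2 w_\e(y) = F(y),
\]
where $F(y) := f(x_0,y) - c(r_0,x_0,y) - a_{ij}(y)\bigl(M + Z(y,M)\bigr)_{ij}$, and the boundary condition as
\[
b^i(y) D_i w_\e(y) + \e^2 w_\e(y) = G(y),
\]
with $G(y) := -b^i(y) \sum_l M^{il} v^l(y)$. By Lemma \ref{lem-cor2-bddz} and Lemma \ref{lem-cor1-osc-uni}, both $F$ and $G$ are uniformly bounded in $L^\infty$ by a constant $K_0 := C(n,\lambda,\Lambda,\mu)\bigl(\|M\| + \|c(r_0,x_0,\cdot)\|_\infty + \|f(x_0,\cdot)\|_\infty\bigr)$.

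Setting $\hat{w}_\e := w_\e - \min w_\e$ and $\gamma_\e := \e^2 \min w_\e$, the normalised function $\hat{w}_\e \ge 0$ is periodic and $|\gamma_\e| \le K_0$ by Lemma \ref{lem-cor2-exist}. Applying the Harnack inequality of \cite{CC} to $\hat{w}_\e$ on $\R^n \setminus T^{\mathfrak{a}+\delta}$ for small $\delta > 0$ yields $S_* \le C_1(I_* + K_0)$, where $S_*$ and $I_*$ denote the sup and inf of $\hat{w}_\e$ on this enlarged complement. The non-positive zeroth-order coefficient $-\e^2$ remains in the standard non-divergence framework of \cite{CC}, and the effective forcing $F + \gamma_\e + \e^2 \hat{w}_\e$ is bounded by $CK_0$ since $\e^2 \hat{w}_\e \le 2\|\e^2 w_\e\|_\infty$ is again controlled by Lemma \ref{lem-cor2-exist}.

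Finally, exactly as in Lemma \ref{lem-precom-osc}, I would place a quadratic barrier $v^+(y) = -\tfrac{K}{2}\bigl(|y|^2 - (\mathfrak{a}+\delta)^2\bigr) + S_*$ on each annular shell $B_{\mathfrak{a}+\delta}(m) \setminus B_\mathfrak{a}(m)$ with $K$ chosen of the form $\tfrac{1}{n\lambda}(\|F\|_\infty + |\gamma_\e|) + \tfrac{1}{\mathfrak{a}\mu}(\|G\|_\infty + |\gamma_\e|) = O(K_0)$, verifying that $v^+$ is a super-solution in the shell and dominates $\hat{w}_\e$ on the outer sphere; comparison then gives $\sup \hat{w}_\e \le S_* + C_2 K_0$, and a symmetric lower barrier gives $\inf \hat{w}_\e \ge I_* - C_2 K_0$. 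Chaining the three inequalities produces the desired bound $\osc \hat{w}_\e \le C K_0$. The only delicate point is verifying that the new zeroth-order term $-\e^2 w_\e$ does not break Harnack or the maximum principle used for the barrier; this holds because its coefficient is non-positive and because $\|\e^2 w_\e\|_\infty$ is already controlled a priori by Lemma \ref{lem-cor2-exist}, so that the argument reduces to a direct transcription of Lemma \ref{lem-precom-osc}.
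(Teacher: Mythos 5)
Your proposal is correct and follows essentially the same route as the paper: rewrite \eqref{eqn-cor2e} for $\widehat{w}_\e=w_\e-\min w_\e$ with the corrector terms (and, via Lemma \ref{lem-cor2-exist}, the $\e^2 w_\e$ contribution) absorbed into right-hand sides bounded by $C\left(\|M\|+\|c(r_0,x_0,\cdot)\|_\infty+\|f(x_0,\cdot)\|_\infty\right)$ using Lemmas \ref{lem-cor2-bddz} and \ref{lem-cor1-osc-uni}. The only difference is cosmetic: the paper folds $\e^2 w_\e$ entirely into $\widetilde{f}$ so that Lemma \ref{lem-precom-osc} applies verbatim as a black box, whereas you retranscribe its Harnack-plus-barrier proof with the nonpositive zeroth-order term kept in place, which is equally valid.
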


\begin{proof}
Let $\widetilde{f}(y) = f(x_0,y) + \e^2 w_\e(y) - a_{ij}(y) \left( M + Z(y,M) \right)_{ij} - c(r_0,x_0,y)$ and $\widetilde{g}(y) = b^i(y) \cdot M^{il} v^l (y) - \e^2 \min_{\R^n \setminus T^{\mathfrak a}}w_\e$. Then $\widehat{w}_\e = w_\e -\min_{\R^n \setminus T^{\mathfrak a}} w_\e$ satisfies the following equation:
\begin{equation*} \begin{cases}
a_{ij}(y) D_{ij} \widehat{w}_\e(y) = \widetilde{f}(y) &\text{ in } \R^n \setminus T^{\mathfrak a} \\
b^i(y) D_i \widehat{w}_\e(y) + \e^2 \widehat{w}_\e (y)  = \widetilde{g}(y) &\text{ on } \partial T^{\mathfrak a}. \\
\end{cases} \end{equation*}

From lemma \ref{lem-cor2-bddz}, lemma \ref{lem-cor2-exist} and lemma \ref{lem-cor1-osc-uni}, we have
\begin{equation}
|\widetilde{f}(y)| + |\widetilde{g}(y)| \le C(n,\lambda,\Lambda,\mu) \left( \|M\| + \|c(r_0,x_0,\cdot)\|_{L^\infty(\R^n \setminus T^{\mathfrak a})} + \| f(x_0,\cdot) \|_{L^\infty(\R^n \setminus T^{\mathfrak a})} \right).
\end{equation}

We note that $\widetilde{f}$ and $\widetilde{g}$ are bounded uniformly on $0 < \e \le 1$. Therefore, the oscillation of $w_\e$ is bounded because of lemma \ref{lem-precom-osc}.
\end{proof}

Let $\sigma_\e = \min_{\R^n \setminus T^{\mathfrak a}} \e^2 w_\e$ and $\widehat{w}_\e = w_\e - \frac{1}{\e^2} \sigma_\e$. Then, $\widehat{w}_\e = w_\e - \frac{1}{\e^2} \sigma_\e$ satisfies
\begin{equation*} \begin{cases}
-\e^2 \widehat{w}_\e(y) + a_{ij}(y) D_{ij} \widehat{w}_\e(y) = \widehat{f}(y) &\text{ in } \R^n \setminus T^{\mathfrak a} \\
b^i(y) D_i\widehat{w}_\e(y) + \e^2 \widehat{w}_\e(y) = \widehat{g}(y) &\text{ on } \partial T^{\mathfrak a} \\
\end{cases} \end{equation*}
where $\widehat{f}(y) = f(x_0,y) +\sigma_\e - a_{ij}(y) \left(M + Z(y,M) \right)_{ij} - c(r_0,x_0,y)$\\
 and $\widehat{g}(y) = b^i(y) \cdot M^{il} v^l_\e(y) - \sigma_\e $.

From proposition \ref{prop-exist-alpha}, $v^l(y)$ is in $\mathcal{C}^{1,\alpha}(\R^n \setminus T^{\mathfrak a})$ and hence $Z(y,M)$ is in $\mathcal{C}^\alpha (\R^n \setminus T^{\mathfrak a})$ for fixed  $M$, $r_0$, and $x_0$. And then from the condition \eqref{con-alpha} in chapter 1, $|\widehat{f}|_{C^\alpha} + |\widehat{g}|_{C^{1,\alpha}}$ is bounded uniformly on $\e$.  So, we have $\mathcal{C}^{2,\alpha}$ estimate for $\widehat{w}_\e$.
\begin{cor}
Let $\widehat{w}_\e(y) = \widehat{w}_\e(y,M,r,x_0)$ be the solution of the equation of \eqref{eqn-cor2e} which satisfies the condition \eqref{con-alpha} (in condition I). Then, $ \| \widehat{w}_\e(y) \|_{C^{2,\alpha}} $ is bounded uniformly on $\e$ for given any $M$, $r$ and $x_0$.
\end{cor}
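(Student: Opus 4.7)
The plan is to reduce the claim to the uniform $C^{2,\alpha}$ estimate already established in Lemma \ref{lem-precom-c2}, applied to $\widehat{w}_\e$. The rewritten equation displayed just before the statement,
\begin{equation*}
\begin{cases}
-\e^2 \widehat{w}_\e + a_{ij}(y) D_{ij} \widehat{w}_\e = \widehat{f}(y) & \text{in } \R^n \setminus T^{\mathfrak a}, \\
b^i(y) D_i \widehat{w}_\e + \e^2 \widehat{w}_\e = \widehat{g}(y) & \text{on } \partial T^{\mathfrak a},
\end{cases}
\end{equation*}
differs from \eqref{eqn-precom} only by the presence of the first corrector data inside $\widehat{f}$ and $\widehat{g}$, so the whole task reduces to controlling $\|\widehat{f}\|_{C^\alpha}$ and $\|\widehat{g}\|_{C^{1,\alpha}}$ uniformly in $\e$.

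For $\widehat{f}(y) = f(x_0,y) + \sigma_\e - a_{ij}(y)(M + Z(y,M))_{ij} - c(r_0,x_0,y)$: condition \eqref{con-alpha} gives uniform $C^\alpha$ bounds on $f(x_0,\cdot)$, $a_{ij}$ and $c(r_0,x_0,\cdot)$; the constant $\sigma_\e$ is bounded by Lemma \ref{lem-cor2-exist}; and the matrix $Z(y,M)$ defined in \eqref{eqn-cor2-bddz} is $C^\alpha$ because each first corrector $v^l$ lies in $C^{2,\alpha}(\R^n \setminus T^{\mathfrak a})$ by Proposition \ref{prop-exist-alpha} applied to \eqref{eqn-cor1}, and $Z$ depends linearly on $Dv^l$ and $M$. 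For $\widehat{g}(y) = b^i(y) M^{il} v^l(y) - \sigma_\e$: combine the $C^{1,\alpha}$ regularity of $b^i$ from condition \eqref{con-alpha} with the $C^{1,\alpha}$ regularity of the $v^l$ (again from Proposition \ref{prop-exist-alpha}), and the boundedness of $\sigma_\e$.

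Once these uniform bounds are in place, Lemma \ref{lem-precom-c2} yields the desired estimate, modulo the extra zero-order term $-\e^2 \widehat{w}_\e$ in the interior equation. Since $\|\widehat{w}_\e\|_{L^\infty} = \osc w_\e$ is bounded uniformly in $\e$ by Lemma \ref{lem-cor2-osc}, and $\e \le 1$, the term $\e^2 \widehat{w}_\e$ can simply be absorbed into the right-hand side: one first invokes the interior $C^\alpha$ estimate (Krylov-Safonov, \cite{CC}) together with the boundary $C^\alpha$ estimate for oblique problems (\cite{LT}) to upgrade the $L^\infty$ bound to a uniform $C^\alpha$ bound on $\widehat{w}_\e$; consequently $\|\e^2 \widehat{w}_\e\|_{C^\alpha}$ is bounded uniformly, and the new right-hand side $\widehat{f} + \e^2 \widehat{w}_\e$ is still uniformly $C^\alpha$. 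The full $C^{2,\alpha}$ estimate of Lemma \ref{lem-precom-c2} then closes the argument.

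The main technical point is ensuring that all Schauder-type constants are independent of $\e$. This is exactly why Lemma \ref{lem-cor2-osc} was proved first: without the uniform oscillation bound for $w_\e$, the $L^\infty$ factor in the a priori estimate would blow up. The other subtle ingredient is that the uniform $C^\alpha$ control on $Z(y,M)$ requires the first correctors $v^l$ to be genuine $C^{2,\alpha}$ periodic solutions, which is guaranteed only because the compatibility condition holds. Modulo these observations, the corollary follows directly from Lemma \ref{lem-precom-c2} applied to $\widehat{w}_\e$.
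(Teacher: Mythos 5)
Your proposal is correct and follows essentially the same route as the paper: the paper likewise verifies that $\widehat{f}$ is uniformly $C^\alpha$ and $\widehat{g}$ uniformly $C^{1,\alpha}$ (using condition \eqref{con-alpha}, the regularity of the first correctors $v^l$, hence of $Z(y,M)$, and the bound on $\sigma_\e$) and then invokes the argument of Lemma \ref{lem-precom-c2}, whose proof it declares ``almost the same'' and omits. Your explicit treatment of the extra zero-order term $-\e^2\widehat{w}_\e$ (absorbing it into the right-hand side after a uniform $C^\alpha$ bound via Krylov--Safonov and the oblique boundary estimates) is a detail the paper glosses over, and it is a valid way to close that small gap.
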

The proof of corollary above is almost same as that of lemma \ref{lem-precom-c2}. So we omit the proof.

\begin{cor}
There is a unique limit of $\e^2 w_\e$ as $\e \ra 0$.
\end{cor}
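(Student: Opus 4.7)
The plan is to extract a convergent subsequence via Arzela–Ascoli, identify the limit equation as one to which the compatibility condition applies, and then invoke the uniqueness portion of Theorem~\ref{thm-main-c} to rule out subsequential dependence.

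First, I would write $\e^2 w_\e = \sigma_\e + \e^2 \widehat{w}_\e$, where $\widehat{w}_\e = w_\e - \frac{1}{\e^2}\sigma_\e$. By the preceding corollary, $\|\widehat{w}_\e\|_{C^{2,\alpha}(\R^n \setminus T^{\mathfrak a})}$ is bounded uniformly in $\e$, so $\e^2 \widehat{w}_\e \to 0$ uniformly, and consequently $\e^2 w_\e - \sigma_\e \to 0$. Thus it suffices to show that the scalar $\sigma_\e$ has a unique limit. By Lemma~\ref{lem-cor2-exist}, $|\sigma_\e|$ is bounded uniformly in $\e$, and by Arzela–Ascoli, along a subsequence $\e_j \to 0$ we can arrange
\begin{equation*}
\widehat{w}_{\e_j} \longrightarrow w \text{ in } \mathcal{C}^2(\R^n \setminus T^{\mathfrak a}), \qquad \sigma_{\e_j} \longrightarrow \gamma \in \R,
\end{equation*}
with $w$ periodic in $y$.

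Next, I would pass to the limit in the equation satisfied by $\widehat{w}_\e$. Recall that
\begin{equation*} \begin{cases}
-\e^2 \widehat{w}_\e + a_{ij}(y) D_{ij} \widehat{w}_\e = \widehat{f}(y) &\text{ in } \R^n \setminus T^{\mathfrak a} \\
b^i(y) D_i \widehat{w}_\e + \e^2 \widehat{w}_\e = \widehat{g}(y) &\text{ on } \partial T^{\mathfrak a},
\end{cases} \end{equation*}
with $\widehat{f}(y) = f(x_0,y) + \sigma_\e - a_{ij}(y)(M+Z(y,M))_{ij} - c(r_0,x_0,y)$ and $\widehat{g}(y) = b^i(y) M^{il} v^l(y) - \sigma_\e$. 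Since $\sigma_{\e_j} \to \gamma$ and $\e_j^2 \widehat{w}_{\e_j} \to 0$ uniformly, passing $j \to \infty$ yields
\begin{equation*} \begin{cases}
a_{ij}(y) D_{ij} w(y) = F(y) + \gamma &\text{ in } \R^n \setminus T^{\mathfrak a} \\
b^i(y) D_i w(y) + \gamma = G(y) &\text{ on } \partial T^{\mathfrak a},
\end{cases} \end{equation*}
where $F(y) := f(x_0,y) - a_{ij}(y)(M+Z(y,M))_{ij} - c(r_0,x_0,y)$ and $G(y) := b^i(y) M^{il} v^l(y)$. This is precisely an equation of the form \eqref{eqn-main-com} with $\xi = 0$, right-hand sides $F$ and $G$, and compatibility constant $\gamma$.

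Now I would apply the uniqueness half of Theorem~\ref{thm-main-c} (specifically Lemma~\ref{lem-alpha-uniq}): the constant $\gamma$ for which the above problem is solvable is unique, depending only on $(a_{ij})$, $b^i$, $F$, $G$, $\mathfrak{a}$, and hence only on $M$, $r_0$, $x_0$. Consequently every subsequential limit of $\sigma_\e$ equals the same $\gamma$, so the full family $\sigma_\e$ converges to $\gamma$, and therefore $\e^2 w_\e \to \gamma$ uniformly on $\R^n \setminus T^{\mathfrak a}$.

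The only non-routine point is verifying that the limit equation really falls under the hypotheses of Theorem~\ref{thm-main-c}; this requires $F \in C^\alpha$ and $G \in C^{1,\alpha}$, which follows from condition~\eqref{con-alpha}, from Proposition~\ref{prop-exist-alpha} applied to the first correctors $v^l$ (giving $Z(\cdot,M) \in C^\alpha$ and $G \in C^{1,\alpha}$), and from the uniform $C^{2,\alpha}$ bound already recorded in the corollary preceding the statement.
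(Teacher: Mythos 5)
Your overall route is the paper's: the uniform $\mathcal{C}^{2,\alpha}$ bound on $\widehat{w}_\e$ together with Lemma \ref{lem-cor2-exist} and Lemma \ref{lem-cor2-osc} gives compactness, a subsequence produces a limit pair $(w,\gamma)$, and uniqueness of the subsequential limit is to come from the touching argument behind Lemma \ref{lem-alpha-uniq}. The one step that does not stand as written is the identification of the limit problem. You correctly derived the interior limit equation $a_{ij}(y)D_{ij}w = F(y)+\gamma$ (the constant enters because $\sigma_{\e_j}\ra\gamma$ inside $\widehat{f}$, equivalently because the auxiliary term $-\e^2 w_\e$ in \eqref{eqn-cor2e} tends to $-\gamma$, not to $0$), but in the next sentence the interior $\gamma$ is dropped: the problem is \emph{not} ``precisely of the form \eqref{eqn-main-com} with right-hand sides $F$ and $G$''. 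Two subsequential limits $(w^1,\gamma^1)$ and $(w^2,\gamma^2)$ solve problems with \emph{different} interior data $F+\gamma^1$ and $F+\gamma^2$, and Lemma \ref{lem-alpha-uniq} asserts uniqueness of the compatibility constant only for a fixed pair of data $(f,g)$; reading the datum as $f:=F+\gamma$ does not help, since then the datum depends on the unknown constant. So the uniqueness half of Theorem \ref{thm-main-c} cannot be quoted as a black box here.

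The gap is fixable by rerunning the touching argument instead of citing it, which is what the paper means by ``a similar argument to Lemma \ref{lem-alpha-uniq}''. Assume $\gamma^1<\gamma^2$ and let $w^1+c$ touch $w^2$ from above at $y_0$. The extra interior term now has the favorable sign: $a_{ij}D_{ij}(w^1+c-w^2)=\gamma^1-\gamma^2<0$, which is incompatible with an interior minimum of $w^1+c-w^2$, since at such a point $D^2(w^1+c-w^2)\ge 0$ and hence $a_{ij}D_{ij}(w^1+c-w^2)\ge 0$. If instead $y_0\in\partial T^{\mathfrak a}$, the two boundary conditions give $b^iD_i(w^1-w^2)(y_0)=\gamma^2-\gamma^1>0$, while the minimum-point geometry used in Lemma \ref{lem-precom-cp} and Lemma \ref{lem-alpha-uniq} forces $b^iD_i(w^1+c-w^2)(y_0)\le 0$; contradiction. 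With this substitution the rest of your argument (boundedness of $\sigma_\e$, a unique subsequential limit hence full convergence, and $\e^2\widehat{w}_\e\ra 0$ uniformly) is complete and coincides with the paper's proof.
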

\begin{proof}
From lemma \ref{lem-cor2-exist} and lemma \ref{lem-cor2-osc}, there exists a subsequence $\e_k$ such that $\e_k^2 w_{\e_k}$ converges to a constant $\sigma$. And, from similar argument in lemma \ref{lem-alpha-uniq}, $\sigma$ should be same even though we change the subsequence because of the uniform $\mathcal{C}^{2,\alpha}$ estimate of $\widehat{w}_\e$. That implies $\e^2 \widehat{w}_\e$ converges to $\sigma$.
\end{proof}

\begin{definition}
$\overline{L}(M,r_0,x_0)$ is the limit of $\e^2 w_\e(y)$ for fixed $M$, $r_0$ and $x_0$.
\end{definition}
We prove later that the limit equation of $u_\e$ satisfies the equation $\overline{L}(M,r,x) = 0$ in chapter \ref{sec-Homogenization}. Usually, it is called as an {\it Effective equation}.

\subsection{Uniformly Ellipticity and Continuity of $\overline{L}$}
We will end this section by proving two important properties of $\overline{L}$, {\it uniformly ellipticity} and {\it continuity}.

\begin{thm} \label{thm-cor2-elliptic}
Assume the conditions in condition I hold and the equation \eqref{eqn-main-1} satisfies the compatibility condition.
Then, there is a positive real number ${\mathfrak a}_0$ depending only on $n$, $\lambda$, $\Lambda$ and $\mu$ such that if the size of hole $\mathfrak{a}$ is less than or equal to ${\mathfrak a}_0$, then $\overline{L}$ is uniformly elliptic. In other words, there is a positive constant $\overline{\lambda}=\overline{\lambda}({\mathfrak a}_0)$ satisfying $\overline{L}(M+N,r,x) \ge \overline{L}(M,r,x) + \overline{\lambda} \| N \|$ for any symmetric matrix $M$ and positive matrix $N$.
\end{thm}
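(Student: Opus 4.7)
The plan is to linearize the cell problem \eqref{eqn-cor2e} in its Hessian input, reduce uniform ellipticity of $\overline L$ to a positivity estimate on the difference of two cell solutions, and close the estimate via an integration against a positive invariant measure. Put $W_\e(y) := w_\e(y;M+N,r,x) - w_\e(y;M,r,x)$. Since $M\mapsto Z(y,M)$ is linear by \eqref{eqn-cor2-bddz} and every remaining term in \eqref{eqn-cor2e} is linear in $M$, the difference $W_\e$ is a periodic viscosity solution of
\[
\begin{cases}
-\e^2 W_\e + a_{ij}(y)\bigl(N + Z(y,N) + D^2 W_\e\bigr)_{ij} = 0 & \text{in } \R^n\setminus T^{\mathfrak a},\\
b^i(y)\bigl(N^{il} v^l(y) + D_i W_\e(y)\bigr) + \e^2 W_\e(y) = 0 & \text{on } \partial T^{\mathfrak a}.
\end{cases}
\]
The estimates in Lemmas \ref{lem-cor2-exist}--\ref{lem-cor2-osc} show $\e^2 W_\e\to\Delta:=\overline L(M+N,r,x)-\overline L(M,r,x)$ uniformly, so the target reduces to $\Delta\ge\overline\lambda\|N\|$ for a constant $\overline\lambda$ depending only on $n,\lambda,\Lambda,\mu,\mathfrak{a}_0$.

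A first attempt is to compare $W_\e$ to a constant subsolution $K/\e^2$; by Lemma \ref{lem-cor2-com} this forces $\Delta\ge K$ provided the pointwise sandwich
\[
-b^i(y)N^{il} v^l(y)\le K\le a_{ij}(y)\bigl(N+Z(y,N)\bigr)_{ij}
\]
holds for every $y$. The boundary side is bounded above by $C\mathfrak{a}\|N\|$ through Lemma \ref{lem-cor1-osc-uni}, and one has the coercive contribution $a_{ij}(y)N_{ij}\ge \lambda\|N\|$. The cross-term $a_{ij}(y)Z_{ij}(y,N)$, however, is only controlled pointwise by $C\|N\|$ uniformly in $\mathfrak{a}$ via Lemma \ref{lem-cor1-dv2}. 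Since $Dv^l$ is not small in $L^\infty$ near the balls, the pointwise argument cannot close the inequality, and the cross-term must be handled in an averaged sense, exploiting that $v^l$ itself is small ($\|v^l\|_\infty\le C\mathfrak{a}$) even though its gradient is not.

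To do this I pass to the limit $\e\to0$. Letting $\hat W$ be the $\mathcal C^{2,\alpha}$-limit of $W_\e-\e^{-2}\min_y\e^2 W_\e(y)$, the pair $(\hat W,\Delta)$ satisfies
\[
a_{ij}(y)\bigl(N+Z(y,N)+D^2\hat W(y)\bigr)_{ij}=\Delta\text{ in }\R^n\setminus T^{\mathfrak a},\quad b^i(y)\bigl(N^{il}v^l+D_i\hat W\bigr)=0\text{ on }\partial T^{\mathfrak a}.
\]
I construct a positive periodic $\phi$ on the cell $Q\setminus B_{\mathfrak a}$ solving the adjoint Neumann problem $D_{ij}(a_{ij}\phi)=\mathrm{const}$ with the dual oblique boundary data, normalized so that $\int_{Q\setminus B_{\mathfrak a}}\phi=1$; existence and uniform positivity follow by adapting the arguments of Section \ref{sec-com} to the adjoint operator. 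Testing the interior equation against $\phi$ and integrating by parts twice converts $\int_{Q\setminus B_{\mathfrak a}}\phi\,a_{ij}D_{ij}\hat W$ into a boundary integral on $\partial B_{\mathfrak a}$; using the boundary condition for $\hat W$ together with $\|v^l\|_\infty\le C\mathfrak{a}$ bounds this contribution by $C\mathfrak{a}\|N\|$. A parallel divergence-theorem manipulation, invoking the equation $a_{ij}D_{ij}v^l=0$ and again $\|v^l\|_\infty\le C\mathfrak{a}$, gives $\bigl|\int_{Q\setminus B_{\mathfrak a}}\phi\,a_{ij}Z_{ij}(\cdot,N)\bigr|\le C\mathfrak{a}\|N\|$. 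The upshot is the identity $\Delta=\overline a_{ij}N_{ij}+O(\mathfrak{a})\|N\|$, with $\overline a_{ij}:=\int_{Q\setminus B_{\mathfrak a}}\phi\,a_{ij}$ inheriting uniform ellipticity with a constant $\overline\lambda_0=\overline\lambda_0(n,\lambda,\Lambda,\mu)>0$ from the bounds on $a_{ij}$ and the uniform positivity of $\phi$. Choosing $\mathfrak{a}_0$ small enough that the $O(\mathfrak{a})$ remainder is absorbed by $(\overline\lambda_0/2)\|N\|$ yields $\Delta\ge(\overline\lambda_0/2)\|N\|$, proving the theorem with $\overline\lambda=\overline\lambda_0/2$. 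The principal obstacle is the construction of the positive invariant measure $\phi$ in the perforated cell with oblique Neumann data and its uniform positivity in $\mathfrak{a}$; this is the one genuinely new analytic ingredient beyond what is already established in the earlier sections.
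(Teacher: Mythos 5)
Your reduction to the difference $W_\e$ and the passage $\e^2W_\e\to\Delta$ are fine (modulo a slip: in the limit the boundary condition is $b^i(N^{il}v^l+D_i\hat W)=-\Delta$, not $=0$, since $\e^2W_\e\to\Delta$ also on $\partial T^{\mathfrak a}$). But the core of your argument has a genuine gap, and the motivating claim for it is mistaken. You argue that because the cross term $a_{ij}Z_{ij}(y,N)$ is only $O(\|N\|)$ pointwise near the holes, ``the pointwise argument cannot close the inequality'' and one must average against an invariant measure. In fact the paper closes it with a pointwise comparison argument: the barrier is not a constant but a periodic, piecewise paraboloid $h$ with large positive curvature $K$ in the annulus $\mathfrak a<|y|<\overline{\mathfrak a}$ (where, by Lemma \ref{lem-cor1-dv2}, the cross term is only bounded by $n^2\Lambda C_1$ and is beaten by $n\lambda K$), small negative curvature $\beta$ outside (where Lemma \ref{lem-cor1-dv1} makes the cross term $\le\lambda/3$ once $\mathfrak a\le\mathfrak a_0$), a concave edge on $\partial B_{\overline{\mathfrak a}}$ so that no test polynomial touches from above there, and slope $-K\overline{\mathfrak a}\mu\le-3C_3\overline{\mathfrak a}$ on $\partial B_{\mathfrak a}$, which dominates the boundary datum $|b^iN^{ij}v^j|\le C_3\mathfrak a$ coming from Lemma \ref{lem-cor1-osc-uni}. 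Then $h_\e=\max_m h(y-m)+\e^{-2}\overline\lambda$ is a sub-solution of \eqref{eqn-cor2e-elliptic} and Lemma \ref{lem-cor2-com} gives $\Delta\ge\overline\lambda=C_3\overline{\mathfrak a}$. So the only missing idea separating your ``first attempt'' from a complete proof is to let the barrier's curvature change between a neighborhood of the hole and its complement.

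The alternative duality route you sketch instead is not complete as written. Its load-bearing ingredient --- a nonnegative periodic invariant density $\phi$ (together with the boundary/local-time density that an oblique, non-conormal condition $b\cdot Du$ necessarily introduces), with existence, normalization, an upper bound near $\partial T^{\mathfrak a}$ uniform in $\mathfrak a$, and the exact integration-by-parts identity $\int\phi\,a_{ij}D_{ij}u+\int_{\partial T^{\mathfrak a}}\psi\,b\cdot Du=0$ --- is nowhere established and does not ``follow by adapting Section \ref{sec-com}'': that section's viscosity machinery applies to the non-divergence oblique problem, not to the double-divergence (Fokker--Planck) adjoint equation $D_{ij}(a_{ij}\phi)=\mathrm{const}$, whose boundary condition is not a ``dual oblique'' condition unless $b$ is proportional to the conormal $a\nu$, which is not assumed. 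Your estimate of $\int\phi\,a_{ij}Z_{ij}$ is also shaky as stated: the coefficients are only $C^\alpha$, so you cannot integrate by parts onto $D_j(\phi a_{ij})$, and the natural auxiliary function $N^{il}y_iv^l$ solving $a_{ij}D_{ij}(N^{il}y_iv^l)=a_{ij}Z_{ij}$ is not periodic, so the cell boundary terms do not cancel. (That particular bound could be salvaged without the equation, since Lemma \ref{lem-cor1-dv1} gives $\|Dv^l\|_{L^1(Q\setminus B_{\mathfrak a})}=O(\mathfrak a)$, but only if you already have the uniform $L^\infty$ bound on $\phi$, which is precisely the unproven step.) As it stands, the proposal replaces the theorem by an unproved statement of comparable difficulty, whereas the barrier construction needs nothing beyond Lemmas \ref{lem-cor1-osc-uni}--\ref{lem-cor1-dv2} and the comparison principle already available.
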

\begin{proof} \item
We will show $\overline{L}(M+N,r_0,x_0) - \overline{L}(M,r_0,x_0) \ge \overline{\lambda} \| N \|$ for any given $M$, $N$, $r_0$, and $x_0$. Actually, it is equivalent to prove $\e^2 w_\e(y;M+N,r_0,x_0) - \e^2 w_\e(y;M,r_0,x_0) \ge \overline{\lambda} \| N \| + o(1)$. So, we first define $\widetilde{w}_\e = w_\e(y;M+N,r_0,x_0) - w_\e(y;M,r_0,x_0)$. Then, $\widetilde{w}_\e$ satisfies the following equation.
\begin{equation} \label{eqn-cor2e-elliptic} \begin{cases}
-\e^2 \widetilde{w}_\e(y) + a_{ij}(y) D_{ij}\widetilde{w}_\e(y) = - a_{ij}(y) \left( N + Z(y,N) \right)_{ij} &\text{ in } \R^n \setminus T^{\mathfrak a} \\
b^i(y) D_i \widetilde{w}_\e(y) +\e^2 \widetilde{w}_\e(y) = - b^i(y) N^{ij} v^j(y) &\text{ on } \partial T^{\mathfrak a}. \\
\end{cases} \end{equation}

We will construct a (viscosity) sub-solution $h_\e(y)$ such that $\e^2 h_\e$ converges to a positive constant. We consider the case $\| N\| =1$ because the general result can be obtained by scaling. To construct a sub-solution, we need to estimate the righthand side of the equation.
First, from lemma \ref{lem-cor2-bddz}, $|Z(y,N)|_{L^\infty} \le \| N \| | D_y V_\e(\cdot,x_0) |_\infty$. Hence we have
\begin{equation*}
a_{ij}(y) \left( N_{ij} + Z_{ij}(y,N) \right)
\le - \lambda \| N \| + \Lambda \| N \| \sum_{ij} |D_i v^j| + o(1)
\end{equation*}
for all $y \in \R^n \setminus T^{\overline{a}}$.

So, for small $\e$, we have
\begin{equation*}
a_{ij}(y) \left( N_{ij} + Z_{ij}(y,N) \right)
\le - \displaystyle\frac{2 \lambda}{3} + \Lambda \sum_{ij} |D_i v^j|.
\end{equation*}

From lemma \ref{lem-cor1-dv2}, $\sum_{ij} |D_i v^j_\e|$ is bounded uniformly on $a$. More precisely,
\begin{equation*}
\Lambda \sum_{ij} |D_i v^j| \le n^2 \Lambda C_1
\end{equation*}
where $C_1$ is a constant in lemma \ref{lem-cor1-dv2} that depends only on $n$, $\lambda$, $\Lambda$, and $\mu$.

Secondly, from lemma \ref{lem-cor1-dv1}, $\sum_{ij} |D_i v^j(y)|$ is small if $y$ is far from the boundary. More precisely, for any given $\overline{\mathfrak{a}}$, if the size of halls $\mathfrak{a}$ is less than or equal to ${\mathfrak a}_0 = \min\left\{\displaystyle\frac{ \lambda}{12 n^2 \Lambda C},\frac{1}{2} \right\}\overline{\mathfrak{a}}$, then we have
\begin{equation*}
\Lambda \sum_{ij} |D_i v^j(y)| \le \displaystyle\frac{n^2 \Lambda C_2 \mathfrak{a}}{d(y,T^{\mathfrak a})} \le \displaystyle\frac{2n^2 \Lambda C_2 {\mathfrak a}_0}{\overline{\mathfrak{a}}/2} \le\displaystyle\frac{\lambda}{3}
\end{equation*}
where $d(y,T^{\mathfrak a})$ is a distant between $y$ and $T^{\mathfrak a}$ and the constant $C_2$ is same in lemma \ref{lem-cor1-dv1}.

Finally, from lemma \ref{lem-cor1-osc-uni}, we have
\begin{equation*}
b^i(y) N_{ij} v^j(y) \ge -C_3 \mathfrak{a}
\end{equation*}
where $C_3$ is a constant which is independent of $\e$ and $\mathfrak{a}$.

Now we are ready to define the barrier. Let us define the function $h$ as follow.
\begin{equation*}
h(y) =
\begin{cases}
\displaystyle\frac{K}{2}(|y|+\overline{\mathfrak{a}})^2 &\text{ ,if } \,0 \le |y| \le \overline{\mathfrak{a}} \\
\displaystyle\frac{\beta}{2}\left(|y|-\frac{10K}{\beta}\overline{\mathfrak{a}}\right)^2 + \displaystyle\left( 2K - \frac{\beta \overline{\mathfrak{a}}^2 }{2} \left( 1 - \frac{10K}{\beta} \right)^2 \right) &\text{ ,if } \,\overline{\mathfrak{a}} \le |y|
\end{cases}
\end{equation*}
We will select $K$, $\beta$, and $\overline{\mathfrak{a}}$ later. Then, the function $h$ is continuous and twice differentiable except the points on $\partial B_{\overline{\mathfrak{a}}}$. And from the calculation, we have
\begin{equation*}
\begin{cases}
a_{ij}(y) D_{ij} h(y) \ge n \lambda K &\text{ if } \mathfrak{a} < |y| < \overline{\mathfrak{a}} \\
a_{ij}(y) D_{ij} h(y) \ge -n \Lambda \beta &\text{ if } \overline{\mathfrak{a}} < |y|.
\end{cases}
\end{equation*}
And, on the boundary $\partial B_\mathfrak{a}$, we have
\begin{equation*}
b^i(y) Dh(y) = - b^i(y) K \left( \mathfrak{a} + \overline{\mathfrak{a}} \right) \nu \le - K \overline{\mathfrak{a}} \mu.
\end{equation*}

If we choose $K$ bigger than $\beta$, then our function $h(y)$ has sharp edge on $\partial B_{\overline{\mathfrak{a}}}$, and hence there are no second order polynomials touching $h({y})$ by above at any points on $\partial B_{\overline{\mathfrak{a}}}$. Select $\beta = \displaystyle\frac{\lambda}{3n\Lambda}$, $K = \displaystyle\frac{n \Lambda C_1}{\lambda} + \displaystyle\frac{3 C_3}{\mu} + 1 + \beta$. Then, $K > \beta$ and,
\begin{equation} \begin{cases}\label{eqn-cor2-elliptic-1}
a_{ij}(y) D_{ij} h(y) \ge n \lambda K \ge n^2 \Lambda C_1 \ge -\displaystyle\frac{2 \lambda}{3} + \Lambda \sum_{ij} |D_i v^j_\e| &\text{ if } \mathfrak{a} < |y| < \overline{\mathfrak{a}}, \\
a_{ij}(y) D_{ij} h(y) \ge -n \Lambda \beta \ge -\displaystyle\frac{\lambda}{3} \ge - \displaystyle\frac{2 \lambda}{3} + \Lambda \sum_{ij} |D_i v^j_\e| &\text{ if } \overline{\mathfrak{a}} < |y|.
\end{cases} \end{equation}

And, on the boundary, we have
\begin{equation} \label{eqn-cor2-elliptic-2}
b^i(y) Dh(y) \le - K \overline{a} \mu \le - 3 C_3 \overline{\mathfrak{a}}.
\end{equation}

Finally, select $\overline{\mathfrak{a}}$ satisfying $\displaystyle\frac{10K}{\beta}\overline{\mathfrak{a}} < 1$ and $\overline{\lambda} = C_3 \overline{\mathfrak{a}}$. Then, $h'(r) < 0$ if $r \ge1$ and hence
\begin{equation*}
h_\e(y) = \max_{m \in \Z^n} h(y-m) + \displaystyle\frac{1}{\e^2} \overline{\lambda}
\end{equation*}
is a sub-solution of equation \eqref{eqn-cor2e-elliptic} for small $\e > 0$ from \eqref{eqn-cor2-elliptic-1} and \eqref{eqn-cor2-elliptic-2}.

And, from lemma \ref{lem-cor2-com}, we have
\begin{equation*}
\e^2 \widetilde{w}_\e \ge \e^2 h_\e(y) \ge \e^2 \max_{m \in \Z^n} h(y-m) + \overline{\lambda} \ra \overline{\lambda} \text{ as } \e \ra 0.
\end{equation*}
That is
\begin{equation*}
\overline{L}(M+N,r_0,x_0) -\overline{L}(M,r_0,x_0) \ge \overline{\lambda} > 0.
\end{equation*}
\end{proof}

\begin{remark}
If $a_{ij}(y) = \delta_{ij}$(Laplace equation), $c(r,x,y) =0$ and $b^i(y) = \nu^i$(Neumann boundary condition) in equation \eqref{eqn-cor2e}, then we can prove the uniform ellipticity even the size of hall $\mathfrak{a}$ is large because we can use the divergence theorem. Let $Q$ be a one cell whose center is 0 and punctured by a ball $B_\mathfrak{a}(0)$. We identify $\e^2w_\e(y;M,r_0,x_0)$ with  $\overline{L}(M,r_0,x_0)$ because the error between them is of order $o(\e)$. Then, we have the followng by using the divergence theorem:
\begin{equation*} \begin{aligned}
 |Q \setminus B_\mathfrak{a}| &\overline{L} (M,r_0,x_0) = \int_{Q \setminus B_\mathfrak{a}} \overline{L} (M,r_0,x_0) dy = \int_{Q \setminus B_\mathfrak{a}} \e^2 w_\e(y) dy \\
 &= \int_{Q \setminus B_\mathfrak{a}} tr(M) +  \sum_i Z_{ii}(y,M) + \triangle w_\e + c(r_0,x_0,y) - f(x_0,y) dy \\
 &= |Q \setminus B_\mathfrak{a}| tr(M) + \int_{Q \setminus B_\mathfrak{a}} \sum_i M^{il} D_i v^l_\e dx + \triangle w_\e dy + \langle c \rangle - \langle f \rangle \\
 &= |Q \setminus B_\mathfrak{a}| tr(M) + \int_{\partial B_\mathfrak{a}} \sum_i M^{il} (v^l_\e(y) -v^l_\e(0)) \nu^i + D_i w_\e \nu^i d \sigma_y + \langle c \rangle - \langle f \rangle \\
 &= |Q \setminus B_\mathfrak{a}| tr(M) - |\partial B_\mathfrak{a}| \overline{L}(M,r,x_0) + \langle c \rangle - \langle f \rangle \\
\end{aligned} \end{equation*}
\end{remark}
where $\langle c \rangle = \int_{Q \setminus B_a} c(r_0,x_0,y) dy$ and $\langle f \rangle = \int_{Q \setminus B_a} f(x_0,y) dy$.
So, we have the explicit formula of $\overline{L}(M,r_0,x_0)$:
\begin{equation*}
\overline{L}(M,r_0,x_0) = \displaystyle\frac{|Q \setminus B_a|}{|Q \setminus B_a| + |\partial B_a|} tr(M) + \displaystyle\frac{1}{|Q \setminus B_a| + |\partial B_a|}\left( \langle c \rangle - \langle f \rangle \right)
\end{equation*}
And the uniform ellipticity comes automatically from above formula.

\begin{prop} \label{prop-cor2-conti}
Assume the conditions in condition I and the equation \eqref{eqn-main-1} satisfies the compatibility condition.
Then, \item
\begin{enumerate}
\item
$\overline{L}(M,r,x)$ is continuous with respect to $r$ and $x$ variables.
\item
$\overline{L}(M,r,x)$ is non-incresing with $r$ variable.
\end{enumerate}
\end{prop}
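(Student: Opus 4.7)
The plan is to exploit the fact that, by definition, $\overline{L}(M,r,x) = \lim_{\e \to 0} \e^2 w_\e(y; M, r, x)$, where $w_\e$ solves the auxiliary problem \eqref{eqn-cor2e}. Both monotonicity and continuity follow by applying the comparison principle (Lemma \ref{lem-cor2-com}) to the difference of two such $w_\e$'s, taken at different parameter values, and then passing to the limit $\e \to 0$.

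For part $(2)$, take $r_1 \le r_2$ and define $\widetilde{w}_\e(y) = w_\e(y; M, r_2, x_0) - w_\e(y; M, r_1, x_0)$. Since the dependence on $r$ enters \eqref{eqn-cor2e} only through the interior term $c(r,x_0,y)$, a direct subtraction gives
\begin{equation*}
\begin{cases}
-\e^2 \widetilde{w}_\e + a_{ij}(y) D_{ij} \widetilde{w}_\e = c(r_1, x_0, y) - c(r_2, x_0, y) & \text{in } \R^n \setminus T^{\mathfrak a}, \\
b^i(y) D_i \widetilde{w}_\e + \e^2 \widetilde{w}_\e = 0 & \text{on } \partial T^{\mathfrak a}.
\end{cases}
\end{equation*}
By the monotonicity condition \eqref{con-c-decr} on $c$, the interior right-hand side is nonnegative, so the constant function $0$ is a supersolution of this problem. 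Applying Lemma \ref{lem-cor2-com} yields $\widetilde{w}_\e \le 0$, hence $\e^2 w_\e(\cdot; M, r_2, x_0) \le \e^2 w_\e(\cdot; M, r_1, x_0)$. Letting $\e \to 0$ gives $\overline{L}(M, r_2, x_0) \le \overline{L}(M, r_1, x_0)$.

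For part $(1)$, fix a target $(r_0, x_0)$ and set $\widetilde{w}_\e(y) = w_\e(y; M, r, x) - w_\e(y; M, r_0, x_0)$. The first corrector $V(y)$ and the matrix $Z(y, M)$ do not depend on $(r, x)$, so subtracting the two versions of \eqref{eqn-cor2e} gives an equation for $\widetilde{w}_\e$ with zero boundary right-hand side and interior right-hand side
\begin{equation*}
\bigl(f(x,y)-f(x_0,y)\bigr) - \bigl(c(r,x,y) - c(r_0,x_0,y)\bigr).
\end{equation*}
By the uniform continuity hypothesis \eqref{con-conti}, this interior source is bounded by a modulus $\omega(|r-r_0|, |x-x_0|)$ independent of $y$ and of $\e$, with $\omega \to 0$ as $(r,x) \to (r_0,x_0)$. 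The constants $\pm \omega/\e^2$ then serve as a super- and subsolution pair for the difference problem; Lemma \ref{lem-cor2-com} gives $|\e^2 \widetilde{w}_\e| \le \omega$ uniformly in $\e$. Passing to the limit yields $|\overline{L}(M, r, x) - \overline{L}(M, r_0, x_0)| \le \omega$, which proves continuity.

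The only subtle point is verifying that the comparison principle of Lemma \ref{lem-cor2-com} really does apply with the constant barrier $W \equiv \omega/\e^2$ (or $W \equiv 0$), but this is immediate because constants trivially solve the homogeneous interior equation up to the $-\e^2 W$ term — and that auxiliary term, built into \eqref{eqn-cor2e} precisely for this purpose, provides the needed sign to absorb the right-hand side. No additional regularity theory beyond what was developed for $w_\e$ in the previous subsection is required, so the argument is short once the right difference equation is written down.
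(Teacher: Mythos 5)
Your proposal is correct and follows essentially the same route as the paper: write the equation satisfied by the difference of two second correctors, bound its right-hand side using condition \eqref{con-conti} (for continuity) or the monotonicity of $c$ in \eqref{con-c-decr} (for monotonicity), compare with constant barriers of size $\omega/\e^2$ (the paper packages this step as the $L^\infty$ bound of Lemma \ref{lem-cor2-exist}, whose proof is exactly that comparison), and pass to the limit $\e\to 0$. The only cosmetic differences are that you treat $(r,x)$ jointly and spell out part (2), which the paper leaves as ``a similar argument with the comparison principle.''
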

\begin{proof} \item
\begin{enumerate}
\item
We will show that $\overline{L}$ is continuous with $x$ variable for fixed $M$, $r$. And we omit the proof of the continuity with $r$ because that is quite similar to the proof of continuity with $x$. Now, suppose that $M$, $r$ are fixed. And let
\begin{equation*} \begin{aligned}
\widetilde{f}(x,y) &= f(x,y) - c(r,x,y), \text{ and } \\
\widetilde{g}(y) &= - b^i(y) \cdot \left( M  V(y) \right)^i.
\end{aligned} \end{equation*}
Then, the equation for second corrector can be modified to
\begin{equation*} \begin{cases}
-\e^2 w_\e(y) a_{ij}(y) D_{ij} w_\e(y) = \widetilde{f}(x,y) &\text{ in } \R^n \setminus T^{\mathfrak a} \\
b^i(y) D_iw_\e(y) + \e^2 w_\e(y) = \widetilde{g}(y) &\text{ on } \partial T^{\mathfrak a} \\
\end{cases} \end{equation*}

For the simplicity of notation, we define $w_\e(y;x) = w_\e(y;M,r,x)$ and $\widetilde{w}_\e(y) = w_\e(y,x_1) - w_\e(y,x_2)$ for some $x_1$, $x_2 \in \overline{\Omega}$. Then, $\widetilde{w}_\e(y)$ satisfies the following equation:
\begin{equation}
\begin{cases}
-\e^2 \widetilde{w}_\e(y) + a_{ij}(y) D_{ij} \widetilde{w}_\e(y) = \widetilde{f}(x_1,y)- \widetilde{f}(x_2,y) &\text{ in } \R^n \setminus T^{\mathfrak a} \\
b^i(y) D_i\widetilde{w}_\e(y) + \e^2 \widetilde{w}_\e(y) = 0 &\text{ on } \partial T^{\mathfrak a}. \\
\end{cases}
\end{equation}

Since $f$ and $c$ are continuous uniformly on $y$,
\begin{equation*}
| f(x_1,y) - f(x_2,y) | + | c(r,x_1,y) - c(r,x_2,y) | \le \sigma( |x_1 - x_2| )
\end{equation*}
where $\sigma : \R^+ \ra\R^+$ is a nondecreasing function with $\lim_{r\ra0+} \sigma(r) =0$.

Hence, $| \e^2 \tilde{w}_\e | \le \sigma(|x_1-x_2|) $ because of lemma \ref{lem-cor2-exist}. And the conclusion comes by taking limit on both side.
\item
It can be shown by using similar argument above and the comparison principle.
\end{enumerate}
\end{proof}

\section{Homogenization }\label{sec-Homogenization}
\subsection{Proof of theorem \ref{thm-main-1}}
In this section, we are going to prove the limit of solutions satisfies the homogenized equation.
First, assume that $u_\e$ is bounded uniformly on $\e$. Then we can define the limit of $u_\e$ in the following way.
\begin{definition} \label{def-lim-u}
Define $u^*$ and $u_*$ as follow:
\begin{equation} \label{eqn-lim-u}\begin{aligned}
u_*(x) &= \lim_{\e\ra 0} \inf \{u_{\e\rq{}} (x_{\e\rq{}}):\,  x_{\e\rq{}}\in B_{\e\rq{}}(x)\cap\Omega_{\e\rq{}} , 0<\e\rq{}\leq \e\} \\
u^*(x) &= \lim_{\e\ra 0} \sup \{u_{\e\rq{}} (x_{\e\rq{}}):\,  x_{\e\rq{}}\in B_{\e\rq{}}(x)\cap\Omega_{\e\rq{}} , 0<\e\rq{}\leq \e\}.
\end{aligned} \end{equation}
\end{definition}

We will prove that $u_*$ is a super-solution of the equation of the effective equation(equation \ref{eqn-rst-1}). And the lower semi-continuity of $u_*$(upper semi-continuity of $u^*$) comes from a similar argument as in \cite{CIL}.

Now we are going to prove our main theorem.
\begin{proof}[\bf{Proof of theorem \ref{thm-main-1}}] \item
Suppose that $u_*$ is not a viscosity super-solution. Then, there is a second polynomial $P(x)$ touches $u_*(x)$ from below at $x_0 \in \overline{\Omega}$ such that there exists $R_0$ satisfying $u_*(x) \ge P(x)$ in $B_{R_0}(x_0)$ and $u_*(x_0) = P(x_0)$ and $\overline{L}(D^2P(x_0),P(x_0),x_0) \ge 6 \eta >0$.

For the simplicity, suppose that $x_0=0$ and $u_*(x_0) =0$. Set $P^\delta(x) = P(x) - \delta |x|^2$. Then, since $B_R(0) \setminus T_\e$ is compact for any given $0 < R \le R_0$, we can find $\widehat{x}(\e) \in B_R(0) \setminus T_\e$ which satisfies
\begin{equation}
u_\e(\widehat{x}(\e)) - P ^\delta(\widehat{x}(\e)) = \min_{B_R(0) \setminus T_\e} \left(u_\e(x) - P ^\delta(x)) \right).
\end{equation}

From the definition of $u_*$, there is a subsequence $\{(\e_n,x_n)\} \in (0,1] \times \overline{\Omega}$ which converges to $(0,0)$ satisfying $x_n\in \Omega_{\e_n}$ and

\begin{equation}
\lim_{n \ra \infty} u_{\e_n}(x_n) = u_*(0).
\end{equation}

Set  $A = \{ \e_n \}$. Since $x_n \in B_R(0)$ and $B_R(0)$ is compact, We can find a subsequence of $\{\widehat{x}_n = \widehat{x}(\e_n)\}$ which converges to some $y \in \overline{B}_R(0)$ as $\e_n \ra 0$. And hence we assume that
$\lim_{n \ra 0} u_{\e_n}(x_n) = u_*(0)$ and $\widehat{x}(\e_n) \ra y$.

Since $u_{\e_n}(\widehat{x}_n) - P^\delta(\widehat{x}_n) \le u_{\e_n}(x_n) - P^\delta(x_n)$ from the definition of $\widehat{x}_n$, by taking limit infimun on both side, we have
\begin{equation}
u_*(y) - P^\delta(y) \le \liminf_{n \ra \infty} \left( u_{\e_n}(\widehat{x}_n) - P^\delta(\widehat{x}_n) \right) \le \liminf_{n \ra \infty} \left( u_{\e_n}(x_n) - P^\delta(x_n) \right) \le u_*(0) - P^\delta(0) = 0.
\end{equation}
But, by the definition of $P^\delta$, we have $$\delta |y|^2 \le u_*(y) - P(y) + \delta |y|^2 =u_*(y) - P^\delta(y)\leq  u_*(0) - P^\delta(0) = 0.$$  Therefore $\delta |y|^2 \le 0$ and hence $y=0$. That implies the sequence $\{\widehat{x}_n\}$ converges to 0 (not as a subsequence). And, from above inequality, we also conclude that
\begin{equation}
\liminf_{n \ra \infty} u_{\e_n}(\widehat{x}_n) = u_*(0)
\end{equation}

Let $\delta_1 = 1$. Since the sequence $\{ \widehat{x}(\e_n) ; \e_n \in A\}$ related with $\delta_1$ converges to 0 as $n \ra \infty$, we can find $\e_1 \in A$ and $\widehat{x}(\e_1)$ satisfying $\e_1 \le \frac{1}{2}$ and $\widehat{x}(\e_1) \in B_{\frac{R}{2}}(0)$. After setting $\delta_2 = \frac{1}{2}$, we also find $\e_2 \in A$ and $\widehat{x}(\e_2)$ satisfying $\e_2 \le \min\left(\e_1, \frac{1}{2^2}\right)$ and $\widehat{x}_2(\e_2) \in B_{\frac{R}{2^2}}$.
In this way, we can obtain a sequence $(\delta_k,\e_k,x_k = \widehat{x}(\e_k,\delta_k))$ satisfying
\begin{enumerate}
\item
$\{ \e_k \}$ is a subsequence of $A$,
\item
$\delta_k \ra 0$, $\e_k \ra 0$, and $x_k \ra 0$ as $k \ra \infty$, and
\item
$u_{\e_k}(x) \le P_k(x) \text{ and } u_{\e_k}(x_k) = P_k(x_k)$ in $B_R(0)$
where $P_k(x) = P^{\delta_k}(x) - P^{\delta_k}(x_k) + u_{\e_k} (x_k) $.
\end{enumerate}

Let $Q_k(x) = P_k(x) - \displaystyle\frac{K(\eta)}{2} \left( |x|^2 - \frac{R^2}{2} \right)$ for a given $K \le 1$. Then, from $|x_k| \le \displaystyle\frac{R}{2}$, $Q_k$ satisfies,
\begin{equation*}
Q_k(x_k) = P_k(x_k) -\frac{K}{2} \left( |x_k|^2 - \frac{R^2}{2} \right) \geq  u_{\e_k}(x_k) + \displaystyle\frac{K R^2}{8}
\end{equation*}
and
\begin{equation*}
Q_k(x) = P_k(x) - \frac{K}{2} \left( R^2 - \frac{R^2}{2} \right) \le u_{\e_k}(x) - \frac{K R^2}{4}
\end{equation*}
on $\partial B_R(0)$.

Let $P(x) = \displaystyle\frac{1}{2}x^t M x + px + u_*(0)$, $M_k = D^2 Q_k = M - 2\delta_k I - K(\eta) I$ and $\xi_k = \xi_k(x) = DQ_k(x) = Mx+p -2\delta_kx - K(\eta) x = \xi(x) -2\delta_kx - K(\eta) x $. We note that $\xi_k(0) =\xi(0)=p$.
Now, let us define the first and second corrector as follow.
\begin{equation*} \begin{cases}
v_k \left( \displaystyle\frac{x}{\e_k},\xi_k \right) = V \left( \displaystyle\frac{x}{\e_k} \right) \xi_k(x) \\
w_k \left( \displaystyle\frac{x}{\e_k} \right) = w_{\e_k} \left( \displaystyle\frac{x}{\e_k} ; M_k, Q_k(x_0),x_0 \right)
\end{cases} \end{equation*}
And, define
\begin{equation*}
\widetilde{Q}_k(x) = Q_k(x) +\e_k v_k \left( \frac{x}{\e_k}, \xi_k \right) + \e_k^2 \widehat{w}_k \left( \frac{x}{\e_k} \right)
\end{equation*}
where
$\widehat{w}_k \left( \displaystyle\frac{x}{\e_k} \right) = w_k \left( \displaystyle\frac{x}{\e_k} \right) - \min_{y \in \R^n \setminus T^{\mathfrak a}} w_k \left( y \right)$.

We will show that $\widetilde{Q}_k(x)$ is a sub-solution in a ball $B_R(0)$ if we choose $R$ and $K$ properly. First, let us check the boundary condition on $B_R(0) \cap \Omega_\e$.
\begin{equation*} \begin{aligned}
b^i \left( \frac{x}{\e_k} \right) D_i \widetilde{Q}_k(x) &=  b^i \left( \frac{x}{\e_k} \right) \left( \xi_k^i(x) + D_i v^l \left( \frac{x}{\e_k} \right) \xi_k^l(x) + \e_k M^{li} \left( v^l \left( \frac{x}{\e_k} \right) + D_i w_k \left( \frac{x}{\e_k} \right)\right) \right) \\
&= -\e_k \e_k^2 w_k \left( \frac{x}{\e_k} \right) \\
&= -\e_k \left( \overline{L}(M_k,Q_k(x_0),x_0) + o(\e_k) \right) \\
\end{aligned} \end{equation*}

From the continuity of $\overline{L}$, $\overline{L}(M_k,Q_k(x_0),x_0)$ converges to $\overline{L}\left( M,\displaystyle\frac{KR^2}{4},0 \right)$ since $M_k \ra M$ and $Q_k(x_0) = \displaystyle\frac{KR^2}{4}$. And $\overline{L}\left( M,\displaystyle\frac{KR^2}{4},0 \right)$ is positive if we choose $K$ and $R$ small enough since $\overline{L}(M,0,0)$ is positive. Hence we have $b^i \left( \frac{x}{\e_k} \right) D_i \widetilde{Q}_k(x) \le 0$ for sufficiently large $k$.

Now we are going to apply $\widetilde{Q}_k$ to our main equation \eqref{eqn-main-1}. From the calculation, we have
\begin{equation*}
D^2 \widetilde{Q}_k(x) M_k + \displaystyle\frac{1}{\e_k} D^2 v^l \left( \displaystyle\frac{x}{\e_k} \right) + Z\left( \displaystyle\frac{x}{\e_k}, M_k \right) + D^2 w_k \left( \displaystyle\frac{x}{\e_k} \right).
\end{equation*}

Apply it to the equation \eqref{eqn-main-1}. Then we have
\begin{equation*} \begin{aligned}
a_{ij} &\left( \displaystyle\frac{x}{\e_k} \right) D^2 \widetilde{Q}_k + c\left( \widetilde{Q}_k(x),x,\displaystyle\frac{x}{\e_k} \right) - f\left( x,\displaystyle\frac{x}{\e_k} \right) \\
&= \e_k^2 w_k \left( \displaystyle\frac{x}{\e_k} \right) -\left( c\left( Q_k(x_0),x_0,\displaystyle\frac{x}{\e_k} \right) - f\left( x_0,\displaystyle\frac{x}{\e_k} \right) \right)+ c\left( \widetilde{Q}_k(x),x,\displaystyle\frac{x}{\e_k} \right) - f\left( x,\displaystyle\frac{x}{\e_k} \right).
\end{aligned} \end{equation*}

From the definition of $Q_k(x)$,
\begin{equation*} \begin{aligned}
\left| Q_k(x) - Q_k(x_0) \right| &\le \left| P(x) -P(x_0) \right| + \delta_k |x|^2 + \displaystyle\frac{K}{2} |x|^2 \\
&\le \| M \| R^2 + |p| R + \delta_k R^2 + \displaystyle\frac{K}{2} R^2
\end{aligned} \end{equation*}
whenever $x \in B_R(x_0)$. Hence we can make $\left| Q_k(x) - Q_k(x_0) \right| $ small by choosing $R(\eta)$ small enough.

And, from lemma \ref{lem-cor1-osc-uni} and \ref{lem-cor2-osc},
\begin{equation*} \begin{aligned}
\left| \widetilde{Q}_k(x) - Q_k(x) \right| &\le \e_k |v_k|_\infty + \e_k^2 |\widehat{w}_k|_\infty \\
&\le C \e_k
\end{aligned} \end{equation*}
for some constant $C$ which is uniform on $\e$ and $k$. Hence it is smaller than $\eta$ if $k$ is large enough.

From those two calculations, $\left| Q_k(x) - Q_k(x) \right|$ satisfies the following by small $R$ and large $k$.
\begin{equation*}
\left| c\left( \widetilde{Q}_k(x),x,\displaystyle\frac{x}{\e_k} \right) - c\left( Q_k(x_0),x,\displaystyle\frac{x}{\e_k} \right) \right| \le \eta
\end{equation*}

Similarly, because of the continuity of $c$ and $f$ with $x$ variable,
\begin{equation*} \begin{aligned}
\left| c\left( Q_k(x_0),x,\displaystyle\frac{x}{\e_k} \right) - c\left( Q_k(x_0),x_0,\displaystyle\frac{x}{\e_k} \right) \right| &\le \eta \\
\left| f(x,y) - f(x_0,y) \right| &\le \eta
\end{aligned} \end{equation*}

Hence, from the continuity of $\overline{L}$, we have
\begin{equation*} \begin{aligned}
a_{ij} \left( \displaystyle\frac{x}{\e_k} \right) D^2 \widetilde{Q}_k + c\left( \widetilde{Q}_k(x),x,\displaystyle\frac{x}{\e_k} \right) - f\left( x,\displaystyle\frac{x}{\e_k} \right) &\ge \overline{L}(M_k,Q_k(x_0),x_0)  -3\eta + o(\e_k) \\
&\ge \overline{L}\left( M,\displaystyle\frac{KR^2}{4},0 \right) -4\eta  \\
&\ge \eta + o(\e_k) \\
&\ge 0
\end{aligned} \end{equation*}
for large $k$.

In summary, $\widetilde{Q}_k(x)$ is a sub-solution of equation \eqref{eqn-main-1} in $B_R(x_0)$ for large $k$ and  $\widetilde{Q}_k(x) \le P_k(x) \le u_{\e_k}(x)$ on $\partial B_R \setminus T_\e$. Hence, from the comparison, we have
\begin{equation*}
\widetilde{Q}_k(x) \le u_{\e_k} (x)
\end{equation*}

in $B_R \setminus T_\e$. Substitute $x_k$ instead of $x$ to above equation and take limit on both side. Then, we have
\begin{equation*}
u_*(0) + \displaystyle\frac{KR^2}{4} \le u_*(0).
\end{equation*}
That is a contradiction. So, $\overline{L}(D^2P(x_0),P(x_0),x_0)$ is nonpositive and hence $u_*$ is a super-solution at any point in $\Omega$.  By using similar argument, we can show $u^*$ is a sub-solution of the equation \eqref{eqn-rst-1} in $\Omega$. 
From the second assumption, $u^* =u_*$ on $\partial \Omega$. So, $u_* \ge u^*$ from the comparison principle. Finally, since $u_* \le u^*$ because of the definition of $u_*$ and $u^*$, we conclude
\begin{equation*}
u_* = u^* \text{ in } \Omega.
\end{equation*}
\end{proof}

\subsection{Construction of barriers when $\Omega$ is convex}
In this section, we are going to construct a barrier to show $u^* =u_*$ on the $\partial \Omega$. At first, we are going to prove that the condition (2) in \ref{thm-main-1} holds if $\Omega$ is convex. 
\begin{lem} \label{lem-bar}
Let $u_\e$ be the solution of the equation \eqref{eqn-main-1}. Assume that the equation \eqref{eqn-main-1} satisfies the condition I and compatibility condition and the size of halls $a$ is less than or equal to $ {\mathfrak a}_0$ where ${\mathfrak a}_0$ is same with the constant in theorem \ref{thm-cor2-elliptic}. Assume also that $\Omega $ is convex. Then $u_\e$ is bounded uniformly on $\e$. Moreover, for any given $x_0 \in \partial
 \Omega$, we can find a barrier functions $h^+_\e$ and $h^-_\e$ which is bounded uniformly on $\e$ and satisfying
\begin{equation} \label{eqn-bdd-h} \begin{cases}
h^-_\e(x) \le u_\e(x) \le h^+_\e(x) \text{ in } \Omega \\
\left| h^+_\e(x_0) - h^-_\e(x_0) \right| \le C \e \text{ for some } C \text{ which is uniform on } \e.
\end{cases} \end{equation}
\end{lem}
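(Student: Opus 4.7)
The plan is to split the argument into the uniform $L^\infty$ bound, which follows from a standard quadratic super/sub-solution, and the pointwise barriers at $x_0$, which are built as concave quadratics $P^\pm$ adapted to the supporting hyperplane of $\Omega$ at $x_0$ and dressed with the first and second correctors from Sections \ref{sec-cor1} and \ref{sec-cor2 and ellipticity}. For the uniform bound, fix $x_*\in\R^n$ and $R>0$ with $\Omega\subset B_R(x_*)$ and take
\begin{equation*}
\Psi^\pm(x) \;=\; \pm\bigl(\|\vp\|_{L^\infty(\partial\Omega)} + K(R^2 - |x-x_*|^2)\bigr),
\end{equation*}
with $K\geq \|f\|_{L^\infty}/(2n\lambda)$. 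Because $D^2\Psi^+=-2KI$ and $c(\Psi^+,x,y)\leq 0$ whenever $\Psi^+\geq 0$ by condition (iv), one checks pointwise that $L(D^2\Psi^+,\Psi^+,x,y) \leq -2Kn\lambda \leq f$ in $\overline\Omega_\e$, so $\Psi^+$ is a classical super-solution with $\Psi^+\geq\vp$ on $\partial\Omega$; comparison gives $u_\e\leq\Psi^+$, and symmetrically $\Psi^-\leq u_\e$.

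At $x_0\in\partial\Omega$, let $\nu$ be the outer unit normal, so that convexity yields $-(x-x_0)\cdot\nu\geq 0$ on $\overline\Omega$. Introduce
\begin{equation*}
P^+(x) \;=\; \vp(x_0) + D\vp(x_0)\cdot(x-x_0) - A(x-x_0)\cdot\nu - \tfrac{K}{2}|x-x_0|^2,
\end{equation*}
with $A,K>0$ to be specified, and set
\begin{equation*}
h^+_\e(x) \;=\; P^+(x) + \e\,v^l\!\bigl(\tfrac{x}{\e}\bigr)[DP^+(x)]^l + \e^2\,\widehat{w}_\e\!\bigl(\tfrac{x}{\e};\,-KI,\,\vp(x_0),\,x_0\bigr) + C_0\,\e,
\end{equation*}
using the first correctors $v^l$ from Section \ref{sec-cor1} and the normalized second corrector $\widehat{w}_\e$ from Section \ref{sec-cor2 and ellipticity}. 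The sub-solution $h^-_\e$ is obtained by reversing the signs of $A$, $K$, and $C_0$.

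To see that $h^+_\e$ is a viscosity super-solution, I first choose $K$ large enough that $\overline L(-KI,\vp(x_0),x_0)\leq -\eta<0$, which is possible by the uniform ellipticity of $\overline L$ from Theorem \ref{thm-cor2-elliptic} together with the boundedness of $\overline L(0,\cdot,\cdot)$. Substituting $h^+_\e$ into the PDE and using the corrector equations exactly as in the proof of Theorem \ref{thm-main-1} yields $L(D^2 h^+_\e,h^+_\e,x,x/\e)-f(x,x/\e)=\overline L(-KI,\vp(x_0),x_0)+o(1)<0$ for small $\e$, while the analogous calculation on $\partial T_\e$ reduces via the first-corrector boundary identity to $b^i D_i h^+_\e = -\e\sigma_\e + O(\e^3) > 0$, so that both alternatives of the super-solution condition are satisfied. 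For the Dirichlet inequality $h^+_\e\geq\vp$ on $\partial\Omega\setminus\partial T_\e$, Taylor expansion of $\vp$ at $x_0$ gives $P^+(x)-\vp(x) = -A(x-x_0)\cdot\nu -(K/2+O(1))|x-x_0|^2$; convexity with $C^2$ boundary supplies the local bound $-(x-x_0)\cdot\nu\geq c_0|x-x_0|^2$ near $x_0$ and a uniform positive lower bound away from $x_0$, so taking $A$ large absorbs the quadratic defect and the shift $C_0\e$ absorbs the $O(\e)$ corrector oscillations. Finally at $x_0$ itself $P^+(x_0)=\vp(x_0)$ while the correctors contribute $O(\e)$ by Lemmas \ref{lem-cor1-osc-uni} and \ref{lem-cor2-osc}, so $h^+_\e(x_0)=\vp(x_0)+O(\e)$ and symmetrically $h^-_\e(x_0)=\vp(x_0)-O(\e)$, yielding $|h^+_\e(x_0)-h^-_\e(x_0)|\leq C\e$.

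The hard part of this argument is the Dirichlet boundary inequality $h^+_\e\geq\vp$ on $\partial\Omega\setminus\partial T_\e$: the linear drift $-A(x-x_0)\cdot\nu$ must dominate both the concave quadratic $-\tfrac{K}{2}|x-x_0|^2$ and the second-order Taylor error of $\vp$, uniformly in $x$ along $\partial\Omega$. This is exactly where the convexity of $\Omega$ is indispensable, since via the $C^2$ second fundamental form it produces the required quadratic lower bound on $-(x-x_0)\cdot\nu$ along $\partial\Omega$; without it the construction collapses and one must instead appeal to the exterior-sphere condition referenced in the introduction.
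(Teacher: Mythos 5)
Your construction of the boundary barrier has a genuine gap at the Dirichlet side. You need $h^+_\e \ge \vp$ on $\partial\Omega\setminus\partial T_\e$, and since your quadratic $P^+$ is concave in \emph{all} directions (Hessian $-KI$), you are forced to ask the linear term $-A(x-x_0)\cdot\nu$ to dominate $\bigl(\tfrac{K}{2}+O(1)\bigr)|x-x_0|^2$ along $\partial\Omega$. The bound $-(x-x_0)\cdot\nu \ge c_0|x-x_0|^2$ you invoke is a \emph{uniform (strict) convexity} statement, not a consequence of convexity: the lemma only assumes $\Omega$ convex (no $C^2$ boundary, no positive second fundamental form), and for a convex domain with a flat boundary portion through $x_0$ (e.g.\ $x_0$ in the interior of a face of a cube) one has $(x-x_0)\cdot\nu \equiv 0$ there, so $P^+-\vp \approx -\tfrac{K+1}{2}|x-x_0|^2 < 0$ on that face no matter how large $A$ is, and the $C_0\e$ shift only repairs a $\sqrt{\e}$-neighborhood of $x_0$. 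The paper's barrier is shaped precisely to avoid this: it takes $P(x)=D\vp(0)\cdot x+\tfrac12|x|^2+\tfrac{K}{2}\bigl(R^2-|x^n-R|^2\bigr)$, i.e.\ a \emph{convex} quadratic $\tfrac12|x|^2$ to dominate $\vp$ everywhere via $\|\vp\|_{C^2}\le 1$, plus a term that is concave only in the single normal direction ($D^2P=I-KE_n$), is nonnegative on $\Omega$ because the supporting hyperplane puts $\Omega\subset\{x^n\ge 0\}$, and vanishes at $x_0$; uniform ellipticity of $\overline L$ (Theorem \ref{thm-cor2-elliptic}) still makes $\overline L(I-KE_n,-R_0,0)\le n\Lambda-\tfrac{\overline\lambda K}{2}+\overline L(0,-R_0,0)$ sufficiently negative for large $K$. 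So only the supporting-hyperplane consequence of convexity is needed, whereas your argument secretly upgrades the hypothesis to uniform convexity.

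There is also a problem in your preliminary uniform bound. The quadratics $\Psi^\pm$ satisfy the interior inequality but, in general, not the oblique inequality on $\partial T_\e$: $b\cdot D\Psi^+=-2K\,b\cdot(x-x_*)$ has no sign. At a point of $\partial T_\e$ the set $\overline D^{2,-}\Psi^+$ contains pairs $\bigl(D\Psi^+ + s\nu,\ D^2\Psi^+ + t\,\nu\otimes\nu\bigr)$ with $s>0$ arbitrarily small and $t>0$ arbitrarily large, for which the interior inequality fails (the $t\lambda$ term) while $G$ is essentially $b\cdot D\Psi^+$, which may be negative; hence $\Psi^+$ is not a viscosity super-solution of \eqref{eqn-main-1} in the either-or sense and the comparison principle cannot be applied to it. This is exactly why the paper does not use a bare quadratic: it dresses the barrier with the first and second correctors so that on $\partial T_\e$ one gets $b^iD_iP_\e=-\e\,\e^2 w_\e\ge 0$ (the sign coming from $\overline L(I-KE_n,-R_0,0)<0$), and the uniform bound on $u_\e$ is then read off from this same corrected barrier. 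Your corrected barrier $h^+_\e$ does handle the hole boundaries correctly, so the fix is to drop the separate $\Psi^\pm$ step and obtain the uniform bound from the corrected barrier itself, after repairing the shape of $P^+$ as above.
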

\begin{proof}
For the simplicity we assume that $x_0=0$ and $\vp(x_0) = \vp(0) = 0$ and $\Omega$ is contained in a half space $x^n \ge 0$. We also assume that $|f(x,y)|_\infty \le 1$ and $\| \vp\|_{C^2(\overline{\Omega})} \le 1$ since the general case can be obtained by scaling.

Let $v^i$ be the solution of equation \eqref{eqn-cor1} when $\xi =e^i$ with $\min_{\R^n \setminus T^{\mathfrak a}} v^i =0$. Let $V$ be a vector whose i-th component is $v^i(y)$.

Since $\| \vp \|_{C^2(\overline{\Omega})} \le 1$, $\vp(x) \le \vp(x_0) + D\vp(x_0) \cdot x + \displaystyle\frac{1}{2} |x|^2$. Choose $R = diam(\Omega)$ and define $P(x) = D\vp(0) \cdot x + \displaystyle\frac{1}{2} |x|^2 + \displaystyle\frac{K}{2} \left( R^2 - |x^n - R|^2 \right)$ for some $K$. Define also $R_0 = R + \displaystyle\frac{R^2}{2}$,  and $\xi(x) = DP(x) = D\vp(0) + x - K\left(x^n e^n- R\right)$ where $e^n$ is the n-th stanadard unit vector defined in $\R^n$.
We finally define $P_\e(x) = P(x) + \e V\left(\displaystyle\frac{x}{\e}\right) \xi(x) + \e^2 \widehat{w}_\e \left(\displaystyle\frac{x}{\e};I -KE_n,-R_0,0\right) + C_1 \e $ where $C_1=C_1(K)$ is a constant satisfying $\| V \|_{L^\infty(\R^n \setminus T^{\mathfrak a})} |\xi(x)|_\infty \le C_1$ and $E_n$ is a matrix with defined by $E_n = e^n (e^n)^t$. By adding $C_1\e$, $P_\e \ge - \sup_{x \in \Omega} \left( D\vp(0) \cdot x + \displaystyle\frac{1}{2}|x|^2 \right) \ge -\left( R + \displaystyle\frac{R^2}{2} \right) = - R_0$ in $\Omega$ and $P_\e \ge \vp$ on $\partial \Omega$.

Then, from the calculation, we have
\begin{equation*} \begin{aligned}
DP_\e(x) &= \xi(x) + DV \left(\frac{x}{\e} \right) \xi(x) + \e (I-KE_n) V\left(\frac{x}{\e} \right) + \e Dw_\e \left(\frac{x}{\e} \right) \\
D^2P_\e(x) &= (I-KE_n) + \displaystyle\frac{1}{\e}DV \left(\frac{x}{\e} \right) \xi(x) + (I-KE_n) DV \left(\frac{x}{\e} \right) + D^2 w_\e\left(\frac{x}{\e} \right)
\end{aligned} \end{equation*}
Now, apply it to our main equation, then we have
\begin{equation} \begin{aligned}
L &\left( D^2 P_\e(x), P_\e(x) ,x,\frac{x}{\e} \right) = a_{ij} \left(\frac{x}{\e} \right)D_{ij}P_\e(x) + c \left(P_\e(x),x,\frac{x}{\e} \right) \\
&= a_{ij} \left(\frac{x}{\e} \right) \left( (I-KE_n) + \displaystyle\frac{1}{\e}DV \left(\frac{x}{\e} \right) \xi(x) + (I-KE_n) DV \left(\frac{x}{\e} \right) + D^2 w_\e(\frac{x}{\e}) \right)_{ij} + c\left(P_\e(x),x,\frac{x}{\e}\right) \\
&\le a_{ij} \left(\frac{x}{\e} \right) \left( (I-KE_n) + (I-KE_n) DV \left(\frac{x}{\e} \right) + D^2 w_\e \left(\frac{x}{\e} \right) \right)_{ij} + c \left(-R_0,x,\frac{x}{\e} \right) \\
&=\e^2 w_\e(y;I-KE_n,-R_0,0) + \left( c \left(-R_0,x,\frac{x}{\e} \right) - c \left(-R_0,0,\frac{x}{\e} \right) \right) + f\left(0,\displaystyle\frac{x}{\e} \right).
\end{aligned} \end{equation}
By the theorem \ref{thm-cor2-elliptic},
\begin{equation} \begin{aligned}
\e^2 w_\e(y;I-KE_n,-R_0,0) &\le \overline{L}(I-KE_n,-R_0,0) + o(\e) \\
&\le  n\Lambda -\displaystyle\frac{\overline{\lambda} K}{2} + \overline{L}(0,-R_0,0)
\end{aligned} \end{equation}
if $\e$ is small enough.
From the definition, we can easily deduce that $ | \overline{L}(0,-R_0,0) | \le \| f(x,y) \|_{L^\infty(\Omega \times (\R^n \setminus T^{\mathfrak a}))} + \| c(-R_0,x,y) \|_{L^\infty(\Omega \times (\R^n \setminus T^{\mathfrak a}))} $. And hence if we choose
\begin{equation*}
K = \displaystyle\frac{2n \Lambda +6 \| f(x,y) \|_{L^\infty(\Omega \times (\R^n \setminus T^{\mathfrak a}))} + 6\| c(-R_0,x,y) \|_{L^\infty(\Omega \times (\R^n \setminus T^{\mathfrak a}))} }{\overline{\lambda}},
\end{equation*}
then,
\begin{equation*}
L \left( D^2 P_\e(x), P_\e(x) ,x,\frac{x}{\e} \right) \le f \left(x,\frac{x}{\e} \right)
\end{equation*}
for every $x \in \Omega_\e$.
And, at the boundary,
\begin{equation*} \begin{aligned}
b^i \left(\frac{x}{\e} \right) D_iP_\e(x) &= b^i\left(\frac{x}{\e} \right) \left( \xi(x) + DV\left(\frac{x}{\e}\right) \xi(x) + \e (I-KE_n) V\left(\frac{x}{\e}\right) + \e Dw_\e\left(\frac{x}{\e}\right) \right)_i \\
&= \e b^i\left(\frac{x}{\e}\right) \left( (I- KE_n) V \left(\frac{x}{\e}\right) + Dw_\e\left(\frac{x}{\e}\right) \right)_i \\
&= -\e^2 w_\e\left(\frac{x}{\e}\right) \\
&\ge 0
\end{aligned} \end{equation*}

Finally, since $P_\e(x) \ge \vp(x)$ on $\partial \Omega$, $P_\e$ is a super-solution of \eqref{eqn-main-1} and hence $u_\e \le P_\e$ on $\Omega_\e$.
And the uniform boundedness comes from the fact that $\e \le1$, and $C_1$ and $\widehat{w}_\e$ is bounded uniformly on $\e$.

Let $h^+_\e = P_\e$. Then, $h^+_\e(x) \ge u_\e(x)$ in $\Omega$ and $h^+_\e(x_0) = h^+_\e(0) \le P(0) + C\e = \vp(0) + C\e$. By using similar argument, we can construct $h^-_\e$ having properties  $h^-_\e(x) \le u_\e(x)$ in $\Omega$ and $h^-_\e(x_0) \ge \vp(0) - C\e$. And such $h^+_\e$ and $h^-_\e$ satisfy \eqref{eqn-bdd-h}.
\end{proof}

\begin{cor}
Let $u_\e$ be the solution of equation \eqref{eqn-main-1} and assume all the conditions in lemma \ref{lem-bar}. Then, for any given boundary point $x_0$,
\begin{equation*}
u^*(x_0) = \vp(x_0) = u_*(x_0).
\end{equation*}
\end{cor}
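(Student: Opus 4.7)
The plan is to squeeze $u_\e$ between the two barriers produced by Lemma \ref{lem-bar} and read off the boundary limits from their explicit form. The key fact is that $h^+_\e$ and $h^-_\e$ consist of a fixed smooth polynomial $P$ (respectively $P^-$) plus oscillating correctors of size $O(\e)$, so they converge uniformly to $P$ (resp.\ $P^-$) on $\overline{\Omega}$ and, crucially, their values at points $x_\e$ inside the ball $B_\e(x_0)$ stay $O(\e)$-close to their values at $x_0$.

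Concretely, I would first recall from the construction of $h^+_\e = P_\e$ in Lemma \ref{lem-bar} that
\begin{equation*}
h^+_\e(x) = P(x) + \e\, V\!\left(\tfrac{x}{\e}\right)\cdot \xi(x) + \e^2\, \widehat{w}_\e\!\left(\tfrac{x}{\e}\right) + C_1\e,
\end{equation*}
where $P(x_0)=\vp(x_0)$ by design, and where $\|V\|_\infty$ together with $\|\widehat{w}_\e\|_\infty$ are bounded uniformly in $\e$ by Lemma \ref{lem-cor1-osc-uni} and Lemma \ref{lem-cor2-osc}. The lemma itself already gives $h^+_\e(x_0) \le \vp(x_0) + C\e$, and an analogous construction produces $h^-_\e$ with $h^-_\e(x_0) \ge \vp(x_0) - C\e$.

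Next I would establish the near-$x_0$ modulus estimate: for $x\in B_\e(x_0)$,
\begin{equation*}
|h^\pm_\e(x) - h^\pm_\e(x_0)| \le |P(x)-P(x_0)| + \e\, |V(\tfrac{x}{\e})\xi(x) - V(\tfrac{x_0}{\e})\xi(x_0)| + \e^2\,|\widehat{w}_\e(\tfrac{x}{\e}) - \widehat{w}_\e(\tfrac{x_0}{\e})| \le C\e,
\end{equation*}
the last bound following from the uniform $C^1$ (resp.\ $C^{2,\alpha}$) estimates of Lemma \ref{lem-cor1-dv2} and the bound on $\|\widehat{w}_\e\|_{C^{2,\alpha}}$ established in Section \ref{sec-cor2 and ellipticity}; note that the factors of $1/\e$ arising from the chain rule are precisely absorbed by the prefactors $\e$ and $\e^2$.

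Combining these two steps, for any $\e'\le\e$ and any $x_{\e'}\in B_{\e'}(x_0)\cap \Omega_{\e'}$ we get
\begin{equation*}
\vp(x_0) - C\e' \le h^-_{\e'}(x_0) + O(\e') \le h^-_{\e'}(x_{\e'}) \le u_{\e'}(x_{\e'}) \le h^+_{\e'}(x_{\e'}) \le h^+_{\e'}(x_0) + O(\e') \le \vp(x_0) + C\e'.
\end{equation*}
Plugging this two-sided bound into Definition \ref{def-lim-u} yields $u_*(x_0)\ge\vp(x_0)$ and $u^*(x_0)\le\vp(x_0)$; combined with the trivial inequality $u_*\le u^*$, this gives $u^*(x_0)=\vp(x_0)=u_*(x_0)$. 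The main technical point is the modulus estimate on $h^\pm_\e$ inside a ball of radius $\e$: this is where the uniform (in $\e$) $C^{2,\alpha}$ regularity of the first and second correctors is used in an essential way, and without those uniform estimates the oscillating terms would not cancel the $1/\e$ growth produced by differentiation.
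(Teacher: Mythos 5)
Your proof is correct and follows essentially the same route as the paper: squeeze $u_\e$ between the barriers $h^\pm_\e$ of Lemma \ref{lem-bar} and use their explicit form (fixed polynomial plus $O(\e)$ corrector terms) to pass to the limit in Definition \ref{def-lim-u}. One small remark: for the modulus estimate near $x_0$ you do not actually need the uniform $C^{2,\alpha}$ (or even $C^1$) bounds on the correctors, since the prefactors $\e$ and $\e^2$ let you bound the corrector terms crudely by $\e\|V\|_\infty\|\xi\|_\infty + \e^2\|\widehat{w}_\e\|_\infty \le C\e$ using only the sup-norm bounds of Lemmas \ref{lem-cor1-osc-uni} and \ref{lem-cor2-osc}, which is all the paper uses.
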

\begin{proof}
From lemma \ref{lem-bar}, there is a function $h^+_\e$ satisfying $0 \le h^+_\e(x) -u_\e(x) \le C \e$ in $\Omega$ for some constant $C$ which is uniform on $\e$. And from the definition of $h^+_\e$ in lemma \ref{lem-bar}, $h^+_\e$ converges to $P(x)$ uniformly.  So, $u^*(x) \le P(x) $ and hence $u^*(x_0) \le P(x_0) = \vp(x_0)$.
Similarly, we can show $u_*(x_0) \ge  \vp(x_0)$. Finally, since $u_*(x_0) \le u^*(x_0)$, $u_*(x_0) = \vp(x_0) = u^*(x_0)$.
\end{proof}

\subsection{Construction of barriers for the non-convex domain}
In this section, we construct a barrier to show $u_* = u^*$ on $\partial \Omega$ for non-convex domain $\Omega$. Throughout this section, we don't assume that $\Omega$ is convex. Instead, we assume that $\Omega$ has a exterior sphere condition. In the other words, for given any $x_0$ on $\partial \Omega$, there is a ball $B_r(x_1)\subset \R^n\backslash \Omega$ satisfying $\{x_0\} \subset \overline{B_r(x_1)} \cap \overline{\Omega}$. Let us also define the set of functions:
\begin{equation*}
\mathcal{A} = \left\{ f(x,y) \in \mathcal{C} (\overline{\Omega} \times \R^n) : \left| f(x_1,y) - f(x_2,y) \right| \le \sigma(|x_1 -x_2| ) \text{ for all } y \in \R^n \setminus T^{\mathfrak a} \right\}
\end{equation*}
for some nondecreasing function $\sigma$ satisfying  $\sigma(0+) =0$. We note that $f$ satisfying  (Condition I) is in $\mathcal{A}$. So, for given any $f(x,y)$ in $\mathcal{A}$, we may find $-\overline{L}(0,0,x)$. We define $\overline{f} = -\overline{L}(0,0,x)$. In other words, $\overline{f}(x)$ is the limit of $-\e^2 w_\e(y)$ and $w_\e$ is the solution of the following equation for given $f(x,y)$:
\begin{equation} \label{eqn-cor2-f}\begin{cases}
-\e^2 w_\e(y) + a_{ij}(y) D_{ij}w_\e(y)= f(x,y) &\text{ in } \R^n \setminus T^{\mathfrak a} \\
b^i(y)  D_i w_\e(y) +\e^2 w_\e(y) = 0 &\text{ on } \partial T^{\mathfrak a}
\end{cases} \end{equation}

\begin{lem}
Suppose that $f(x,y) = 1$ identically. Then, $\overline{f}$ is a positive constant which depends only on $n$, $\lambda$, $\Lambda$, and the size of holls $a$.
\end{lem}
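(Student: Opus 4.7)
The plan is first to observe that $\overline{f}$ is automatically a constant and then to establish strict positivity by exhibiting an explicit periodic super-solution $w^+$ of the cell problem \eqref{eqn-cor2-f} whose $\e^2$-rescaling is bounded above by a strictly negative number. The constancy is free: because $f(x,y)\equiv 1$ does not depend on either variable, the cell problem is literally the same at every $x\in\overline{\Omega}$, so $w_\e$, and in turn $\overline{f}(x)=-\overline{L}(0,0,x)$, is independent of $x$; only the structural constants $n,\lambda,\Lambda,\mu,\mathfrak{a}$ can appear. The substance is the lower bound.

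The super-solution I would use is the periodic quadratic ansatz
\begin{equation*}
w^+(y):=-\frac{c_0}{\e^2}+\kappa\,\psi(y),\qquad \psi(y):=\min_{m\in\Z^n}|y-m|^2,
\end{equation*}
with positive parameters $c_0,\kappa$ to be chosen. The advantage of this ansatz is that $|y-m|^2$ has Hessian of order one, so $a_{ij}D_{ij}|y-m|^2=2\,\mathrm{tr}(A(y))\le 2n\Lambda$ bounds the interior contribution, while its gradient on $\partial B_\mathfrak{a}(m)$ equals $2(y-m)=2\mathfrak{a}\nu$, which pairs with the oblique vector to give $b^i(y)D_i|y-m|^2\ge 2\mathfrak{a}\mu$ and hence a useful Neumann gain. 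Because $\mathfrak{a}<1/2$, the infimum defining $\psi$ is attained uniquely near every hole, so $\psi$ is smooth in a full neighbourhood of $\partial T^{\mathfrak a}$; on the Voronoi walls far from the holes $\psi$ may have Lipschitz corners, but the minimum of a family of $\mathcal{C}^2$ super-solutions of the proper elliptic operator $a_{ij}D_{ij}\cdot-\e^2\cdot-1$ remains a viscosity super-solution, so nothing is lost.

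Plugging $w^+$ into \eqref{eqn-cor2-f}, the interior super-sol inequality $-\e^2 w^++a_{ij}D_{ij}w^+\le 1$ reduces to $c_0+2\kappa n\Lambda\le 1$ (the extra term $\e^2\kappa\psi\ge 0$ only helps), and the oblique-boundary super-sol inequality $b^iD_iw^++\e^2 w^+\ge 0$ reduces to $2\kappa\mathfrak{a}\mu\ge c_0$ up to an $O(\e^2)$ correction that is absorbed for small $\e$. Taking $\kappa=c_0/(\mathfrak{a}\mu)$ balances the boundary, and the remaining interior constraint becomes $c_0\,(1+2n\Lambda/(\mathfrak{a}\mu))\le 1$, which is satisfied by the concrete choice
\begin{equation*}
c_0:=\frac{1}{2}\,\frac{\mathfrak{a}\mu}{\mathfrak{a}\mu+2n\Lambda},
\end{equation*}
depending only on $n,\Lambda,\mu,\mathfrak{a}$. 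Lemma \ref{lem-cor2-com} applied to $w_\e$ and $w^+$ then gives $w_\e\le w^+$, so $\e^2 w_\e(y)\le -c_0+\e^2\kappa\psi(y)=-c_0+O(\e^2)$; passing to $\e\to 0^+$ yields $\overline{L}(0,0,x)\le -c_0$ and hence $\overline{f}\ge c_0>0$.

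The one real obstacle worth flagging is precisely the coupling of the interior and boundary inequalities: no purely interior barrier (such as a negative constant $-c_0/\e^2$) satisfies the Neumann condition, and no purely tangential boundary barrier produces the correct Hessian sign for the interior equation. The quadratic ansatz resolves this coupling because its Hessian contribution is of order $1$ while its oblique boundary derivative is of order $\mathfrak{a}$; this mismatch is exactly what forces the $\mathfrak{a}$-dependence of the lower bound $c_0$ stated in the lemma.
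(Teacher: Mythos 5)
Your reduction to constructing a barrier and your remark that the minimum of classical super-solutions survives as a viscosity super-solution are fine, but the key boundary inequality has the wrong sign, and this is not cosmetic. Throughout the paper the obliqueness hypothesis $b\cdot\nu\ge\mu$ is used with $\nu$ the outward normal of the perforated domain, i.e.\ pointing \emph{into} the hole: this is exactly how the super-solution $v^+=-\tfrac{K}{2}(|y|^2-(\mathfrak a+\delta)^2)+S_*$ in Lemma \ref{lem-precom-osc} gets $b^iD_iv^+\ge K\mathfrak a\mu$, how $h^+$ is handled in Lemma \ref{lem-cor1-osc-uni}, and how the present lemma's own proof obtains $b^iD_ih_1\le-\mu c_1$ from $Dh_1=-|Dh_1|\nu$ on $\partial Q\cap\partial T^{\mathfrak a}$. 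With that orientation, on $\partial B_{\mathfrak a}(m)$ one has $D|y-m|^2=2\mathfrak a\,(y-m)/\mathfrak a=-2\mathfrak a\nu$, hence $b^iD_i\psi\le-2\mathfrak a\mu$, the opposite of the inequality $b^iD_i\psi\ge 2\mathfrak a\mu$ you assert. Consequently, for your $w^+=-c_0/\e^2+\kappa\psi$ the boundary quantity satisfies $b^iD_iw^++\e^2w^+\le-2\kappa\mathfrak a\mu-c_0+\e^2\kappa\mathfrak a^2<0$: both the gradient term and the indispensable constant $-c_0$ push in the wrong direction, so $w^+$ is not a viscosity super-solution of \eqref{eqn-cor2-f} (on $\partial T^{\mathfrak a}$ it is in fact a sub-solution), Lemma \ref{lem-cor2-com} cannot be invoked, and the bound $\e^2w_\e\le-c_0+O(\e^2)$ does not follow. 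Note also that the orientation is not a harmless convention here: flipping $b$ changes the penalized boundary condition $b\cdot Dw+\e^2w=0$ in a way that changes the limit constant (compare the divergence-theorem formula after Theorem \ref{thm-cor2-elliptic}, where the factor $|Q\setminus B_{\mathfrak a}|+|\partial B_{\mathfrak a}|$ would become a difference), so your proof must work with the paper's convention.

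The obvious repair, replacing $\kappa\psi$ by $-\kappa\psi$ to restore the boundary sign, fails for the reason you yourself flag in the interior discussion: $-\min_m|y-m|^2=\max_m(-|y-m|^2)$ has convex kinks on the cell walls, where $\overline D^{2,-}$ contains matrices with arbitrarily large positive part and the interior super-solution inequality is violated; min-gluing is compatible only with profiles that \emph{increase} away from the holes, while the boundary inequality (with $\e^2w^+\approx-c_0<0$) forces a profile that \emph{decreases} away from the holes with a quantitatively positive oblique derivative along $b$. Resolving this tension is precisely the content of the lemma, and the paper does it by a different device: it builds a \emph{sub}-solution for $f=-1$ from the solution $h_1$ of an auxiliary Dirichlet problem on the dumbbell-shaped set $Q$, using Hopf's lemma to produce the strictly signed oblique derivative $b^iD_ih_1\le-\mu c_1$ of controlled size on $\partial T^{\mathfrak a}\cap\partial Q$, extends $h_1$ by zero and by a quadratic cap so that every kink is admissible for a sub-solution, and adds the constant $\tfrac{1}{\e^2}\tfrac{\mu c_1}{4n\Lambda K}$ before applying comparison. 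A corrected version of your quadratic idea would need a cap of the form $\tfrac{K}{2}\left(|y-m|-R\right)^2$ for $\mathfrak a\le|y-m|\le R<\tfrac12$, extended by zero in a $C^{1,1}$ fashion (so no bad kinks ever appear), with $K\Lambda$ small against $1-c_0$ and $K(R-\mathfrak a)\mu\ge c_0$; as written, however, your barrier does not prove the lemma.
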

\begin{figure} \label{fig-q}[ht]
\includegraphics[width=60mm]{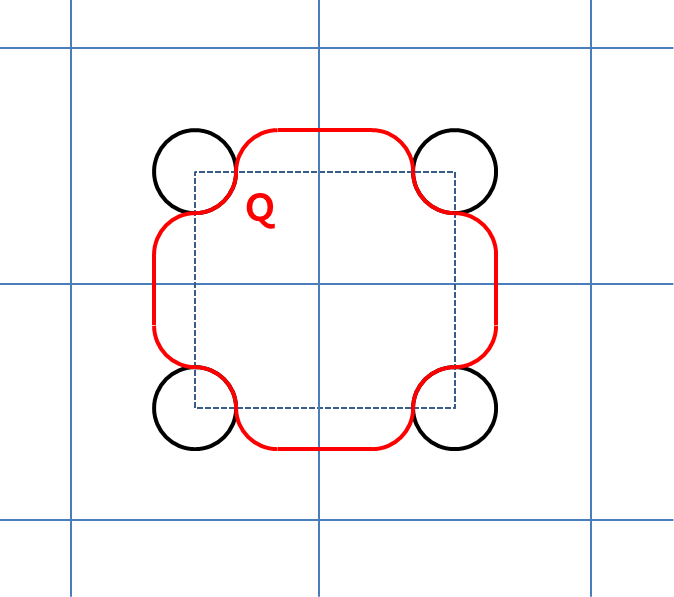}
\caption{The shape of Q}
\end{figure}
\begin{proof}
It is enough to prove $\overline{-1}$ is a negative constant. 
Let $Q$ be a subset of $\R^n \setminus T^{\mathfrak a}$ having the shape in the picture and $y_0 = \left( \displaystyle\frac{1}{2}, \frac{1}{2}, \cdots, \frac{1}{2} \right)$ be a point in $Q$. We can choose $Q$ that has a $\mathcal{C}^2$ boundary. Let $h_1$ be the solution of the following equation:
\begin{equation*} \begin{cases}
a_{ij}(y) D_{ij} h_1(y) = 0 &\text{ in } Q\setminus B_{\frac{1}{3}} (y_0) \\
h_1(y) = 0 &\text{ on } \partial Q \\
h_1(y) = 1 &\text{ on } \partial B_{\frac{1}{3}} (y_0)
\end{cases} \end{equation*}

Then, from the regularity theory of viscosity solution in \cite{LT} and Hopf's lemma(lemma 3.4  in \cite{GT}), $h_1$ satisfies the following:
\begin{equation*} \begin{cases}
h_1 \text{ is in  } \mathcal{C}^2 \left( \overline{ Q \setminus B_{\frac{1}{3}} (y_0) } \right) \\
0 \le h_1 \le 1 \text{ in }  \overline{ Q \setminus B_{\frac{1}{3}} (y_0) } \\
D h_1 = |D h_1 | \nu \text{ and } |D h_1 | \le c_2 \text{ on } \partial B_{\frac{1}{3}} (y_0)\\
D h_1 = -|D h_1 | \nu \text{ and } |D h_1 | \ge c_1 \text{ on } \partial Q\\ 
\end{cases} \end{equation*}
where $c_1$ and $c_2$ are positive constants depending only on $n$, $\lambda$, $\Lambda$ and $\mu$. 
Now extend $h_1 = 0 $ outside of $Q$ and $h_1 = \displaystyle\frac{K}{2} \left( \left| \displaystyle\frac{1}{3} \right|^2 - |y-y_0|^2 \right) $ in $B_{\frac{1}{3}} (y_0)$. Hence, if we choose $ K \ge 3(c_2 +1)$, then $h_1$ satisfies 
\begin{equation*} \begin{cases}
a_{ij}(y) D_{ij} h_1 \ge -n \Lambda K &\text{ in } \R^n \setminus T^{\mathfrak a} \\
b^i(y) D_i h_1 \le - \mu c_1 &\text{ on } \partial T^{\mathfrak a} \cap \partial Q
\end{cases} \end{equation*}
(in the viscosity sense) since the  sharp edge on $\partial Q \cup \partial B_{\frac{1}{3}} (y_0)$ does not allow any second polynomial to touch $h_1$ from above on $\partial Q \cup \partial B_{\frac{1}{3}} (y_0)$. 

So, the function $h_\e = \displaystyle\frac{1}{2n\Lambda K} \sup_{k \in \Z^n} h_1(y-k) + \displaystyle\frac{1}{\e^2}  \displaystyle\frac{\mu c_1}{4n\Lambda K} $ is a sub-solution of equation \eqref{eqn-cor2-f} for small $\e$.  Hence the solution $w_\e$ of  \eqref{eqn-cor2-f} with $f(x,y)=-1$ should be larger than $h_\e$ by lemma \eqref{eqn-cor2-f}.  Now  we get the conclusion since $-\e^2 h_\e \le -\displaystyle\frac{\mu c_1}{4n\Lambda K}  < 0$.

\end{proof}

\begin{lem} \label{lem-bar-inv}
For given any $\phi \in \mathcal{C} (\overline{\Omega})$ and $f \in \mathcal{A}$, there is a function $g(x) \in \mathcal{C} (\overline{\Omega})$ satisfying
\begin{equation*}
\overline{f+g} = \phi(x)
\end{equation*}
\end{lem}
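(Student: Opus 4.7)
The plan is to exploit the linearity of the map $f\mapsto\overline f$ together with the strict positivity $\overline{1}=c^*>0$ furnished by the preceding lemma. These suggest the explicit candidate
\[
g(x)=\frac{\phi(x)-\overline f(x)}{c^*},
\]
and the proof then reduces to checking (i) that $f\mapsto\overline f$ really is linear, (ii) that when the input depends only on $x$ the output at a point $x_0$ equals $c^*$ times the input at $x_0$, and (iii) that $\overline f$ is continuous on $\overline\Omega$.

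For (i), since the equation \eqref{eqn-cor2-f} is linear in $w_\e$ and, by Lemma \ref{lem-cor2-com}, admits a unique periodic viscosity solution at every fixed $x_0$, I would note that if $w_\e^{(j)}$ is the solution corresponding to $f_j(x_0,\cdot)$ then $\alpha_1 w_\e^{(1)}+\alpha_2 w_\e^{(2)}$ solves \eqref{eqn-cor2-f} with source $\alpha_1 f_1(x_0,\cdot)+\alpha_2 f_2(x_0,\cdot)$; multiplying by $\e^2$ and sending $\e\to 0$ gives the pointwise linearity of $f\mapsto\overline f$. For (ii), if $g=g(x)$, the slice $y\mapsto g(x_0)$ is a constant, so the preceding lemma combined with (i) yields $\overline g(x_0)=c^* g(x_0)$. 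Together these force $\overline{f+g}(x)=\overline f(x)+c^*g(x)$, and equating this with $\phi$ produces the formula above with $\overline{f+g}=\phi$ automatic.

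The hard part will be (iii), establishing $\overline f\in\mathcal C(\overline\Omega)$; this is the analogue of Proposition \ref{prop-cor2-conti}(1) in the present setting. More explicitly, for $x_1,x_2\in\overline\Omega$ the difference $\widetilde w_\e(y):=w_\e(y;x_1)-w_\e(y;x_2)$ solves \eqref{eqn-cor2-f} with interior source $f(x_1,y)-f(x_2,y)$ and homogeneous boundary data, so Lemma \ref{lem-cor2-exist} yields
\[
\|\e^2\widetilde w_\e\|_{L^\infty(\R^n\setminus T^{\mathfrak a})}\le C\,\|f(x_1,\cdot)-f(x_2,\cdot)\|_{L^\infty(\R^n\setminus T^{\mathfrak a})}\le C\sigma(|x_1-x_2|),
\]
the last bound being exactly the content of $f\in\mathcal A$. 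Sending $\e\to 0$ gives $|\overline f(x_1)-\overline f(x_2)|\le C\sigma(|x_1-x_2|)$, so $\overline f$, and hence $g$, is continuous on $\overline\Omega$. Since $g$ depends only on $x$ and is continuous, $f+g$ belongs to $\mathcal A$ with modulus $\sigma+\omega_g$, so $\overline{f+g}$ is legitimately defined and equals $\phi$ by construction.
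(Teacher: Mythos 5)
Your proposal is correct and follows essentially the same route as the paper: use $\overline{1}=c_0>0$ from the preceding lemma, the linearity of $f\mapsto\overline f$ coming from the linear equation \eqref{eqn-cor2-f}, and the explicit formula $g=(\phi-\overline f)/c_0$. The only difference is that you spell out the continuity of $\overline f$ via the $L^\infty$ bound on the difference $w_\e(\cdot;x_1)-w_\e(\cdot;x_2)$ (the argument of Proposition \ref{prop-cor2-conti}), a step the paper invokes only implicitly, and you also note $f+g\in\mathcal A$; both are welcome refinements, not deviations.
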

\begin{proof}
By above lemma, there is a positive constant $c_0$ such that $\overline{1} =c_0$. From the definition of $\overline{f}$, if $f$ is independent on $y$, then $\overline{f}(x) = \overline{f(x)}$ for each fixed $x \in \overline{\Omega}$. From this and the linearity of the equation \eqref{eqn-cor2-f}, $\overline{f}(x) = c_0 f(x)$ for all $f \in \mathcal{C}(\overline{\Omega})$.  
Now let us define $g(x)$ as follow:
\begin{equation*}
g(x) = \displaystyle\frac{\phi(x) - \overline{f}(x)}{c_0}.
\end{equation*}
Then, $g(x)$ is continuous since $\overline{f}(x)$ and $\phi(x)$ is continuous and,
\begin{equation*}
\overline{f(x,y) + g(x) } = \overline{f} + \overline{g} = \overline{f} + \left( \phi(x) - \overline{f} \right) = \phi(x) .
\end{equation*} 

\end{proof}

Let $x_0$ be a point on $\partial \Omega$. We assume that $\Omega$ has exterior sphere condition. So, there exists a ball $B_r(x_1)$ such that $\overline{B_r(x_1)} \cap \overline{\Omega} = \{x_0 \}$.
We may assume that $x_1=0$ without any loss of generality. Since $\Omega$ is bounded, we can find a large ball $\mathcal{O}$ containing $\Omega$ and $B_r(0)$. Extend $c(r,x,y)$ and $f(x,y)$ when $x \in \mathcal{O}$ and hence $\widetilde{L}(M,r,x) = \overline{L}(M,r,x) - \overline{f}(x)$ also defined on $\mathcal{O}$.

\begin{lem} \label{lem-bar-h}
Assume that $\widetilde{L}$ is uniformly elliptic. Then, there is a function $h \in \mathcal{C}^2(\overline{\mathcal{O}})$ satisfying the following:
\begin{equation*} \begin{cases}
\widetilde{L}(D^2 h,h,x) \le -\left( \| f\|_\infty +1 \right) \text{ on } \Omega \\
h \ge \vp \text{ on } \partial \Omega \\
h(x_0) = \vp(x_0). \\
\end{cases}\end{equation*}
\end{lem}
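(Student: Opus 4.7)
The plan is to write $h$ explicitly as the standard exterior-sphere radial barrier plus the linear Taylor polynomial of $\vp$ at $x_0$. For an exponent $\alpha$ to be fixed later, set $\psi_0(x) := r^{-\alpha} - |x|^{-\alpha}$ on $\{|x|\ge r\}$ and extend $\psi_0$ to a $C^2$ function $\psi$ on $\overline{\mathcal{O}}$ (the extension inside $B_r(0)$ is irrelevant because $\overline{\Omega}\cap B_r(0)=\emptyset$). Define
\[
h(x) := \vp(x_0) + D\vp(x_0)\cdot(x - x_0) + A\,\psi(x),
\]
where $A>0$ will be chosen large at the end. Then $h\in C^2(\overline{\mathcal{O}})$ and $h(x_0)=\vp(x_0)$ automatically, so only the operator inequality on $\Omega$ and the sign condition on $\partial\Omega$ remain.

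\noindent\textbf{Interior inequality.} A direct computation shows that $D^2\psi_0$ has $n-1$ tangential eigenvalues equal to $\alpha|x|^{-\alpha-2}$ and one radial eigenvalue equal to $-\alpha(\alpha+1)|x|^{-\alpha-2}$. Choosing $\alpha$ large (depending on $n$ and the ellipticity constants $\overline\lambda,\overline\Lambda$ of $\widetilde L$) so that $\overline\lambda(\alpha+1)>\overline\Lambda(n-1)+1$ yields the Pucci bound $\mathcal{M}^+_{\overline\lambda,\overline\Lambda}(D^2\psi(x))\le -\kappa$ on $\overline{\Omega}$ for some $\kappa>0$ (using that $|x|$ is bounded on $\overline{\Omega}$). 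Since $D^2 h = A\,D^2\psi$ and $\widetilde L$ is uniformly elliptic, linear in $M$ (by the linear structure of the corrector equation), and nonincreasing in $r$ by Proposition \ref{prop-cor2-conti},
\[
\widetilde L(D^2 h, h, x) \le \widetilde L(0,h,x) + A\,\mathcal{M}^+(D^2\psi) \le \widetilde L(0,h,x) - A\kappa.
\]
Since $\psi\ge 0$ on $\overline{\Omega}$, there is a uniform lower bound $h(x)\ge r_*:=\vp(x_0)-\|D\vp\|_\infty\,\mathrm{diam}(\Omega)$ independent of $A$, and monotonicity in $r$ gives $\widetilde L(0,h,x)\le \widetilde L(0,r_*,x)\le C_1$ for a constant $C_1$ depending only on the data. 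Taking $A$ so large that $A\kappa\ge C_1 + \|f\|_\infty + 1$ yields the required inequality.

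\noindent\textbf{Boundary inequality and main obstacle.} By the $C^2$ Taylor expansion of $\vp$ at $x_0$, $\vp(x)-\vp(x_0)-D\vp(x_0)\cdot(x-x_0)\le \tfrac12\|D^2\vp\|_\infty|x-x_0|^2$, so it suffices to prove $\psi(x)\ge c|x-x_0|^2$ on $\partial\Omega$ for some $c>0$. Away from $x_0$ this is immediate from compactness of $\partial\Omega$ and the fact that $\overline{\Omega}\cap\overline{B_r(0)}=\{x_0\}$. The delicate point is near $x_0$: $\psi$ only vanishes linearly in $|x|-r$, while $|x-x_0|$ can be far larger than $|x|-r$ along tangential directions, so a geometric input is needed. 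Combining the exterior sphere condition with the $C^2$-smoothness of $\partial\Omega$, $\Omega$ lies locally in the half-space $\{y:x_0\cdot(y-x_0)\ge 0\}$, which gives
\[
|x|^2 - r^2 \;=\; 2\,x_0\cdot(x-x_0) + |x-x_0|^2 \;\ge\; |x-x_0|^2
\]
for $x\in\partial\Omega$ in a neighborhood of $x_0$, and since $\psi_0(x)\sim \alpha r^{-\alpha-1}(|x|-r)$ on $\{|x|\ge r\}$ uniformly on bounded sets, we conclude $\psi(x)\ge c|x-x_0|^2$ locally. Taking $A$ large enough to beat $\|D^2\vp\|_\infty$ (and the far-field constant) yields $h\ge\vp$ on $\partial\Omega$. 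The main obstacle is exactly this local quadratic lower bound for $\psi$: without the half-space argument the tangential approach to $x_0$ would produce a deficit between $A\psi$ and the quadratic Taylor remainder of $\vp$ that no scaling of the radial exterior-sphere barrier could close.
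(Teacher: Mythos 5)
Your interior estimate is fine, but the boundary step $h \ge \vp$ on $\partial\Omega$ contains a genuine gap, and it is precisely where your construction departs from the paper's. The exterior sphere condition only gives $|x|\ge r$ on $\overline{\Omega}$, i.e. $2x_0\cdot(x-x_0) \ge -|x-x_0|^2$; it does not give the half-space inclusion $x_0\cdot(x-x_0)\ge 0$ that you invoke, and smoothness of $\partial\Omega$ does not rescue it (this subsection is exactly the non-convex case, so no convexity is available). Worse, the quadratic lower bound $\psi \ge c|x-x_0|^2$ on $\partial\Omega$ near $x_0$ that your argument needs can genuinely fail: if $\partial\Omega$ osculates the exterior sphere at $x_0$ to second order (same tangent plane and same curvature $1/r$, separated from the sphere only at fourth order, e.g. locally the graph $x_n = r - \tfrac{|x'|^2}{2r} + c|x'|^4$ over the tangent plane), then $\overline{B_r(0)}\cap\overline{\Omega}=\{x_0\}$ still holds, but on $\partial\Omega$ one computes $|x|^2-r^2 \sim C|x-x_0|^4$, hence $\psi(x)\sim C|x-x_0|^4$. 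For a boundary datum such as $\vp(x)=|x-x_0|^2$ the quadratic Taylor remainder then beats $A\psi$ near $x_0$ for every $A$, so your $h=\vp(x_0)+D\vp(x_0)\cdot(x-x_0)+A\psi$ cannot satisfy $h\ge\vp$ on $\partial\Omega$. In short, replacing $\vp$ by its linear Taylor polynomial discards a quadratic term that the radial exterior-sphere barrier cannot always dominate.

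The paper avoids the issue entirely by keeping $\vp$ itself in the barrier: it extends $\vp$ to $\mathcal{O}$ and sets $h(x)=\vp(x)+K\bigl(r^{-\alpha}-|x|^{-\alpha}\bigr)$ outside $B_r(0)$ (extended suitably inside $B_r$, which is irrelevant since $\overline{\Omega}\cap B_r(0)=\emptyset$). Then $h\ge\vp$ on $\partial\Omega$ and $h(x_0)=\vp(x_0)$ are immediate from $|x|\ge r$ on $\overline{\Omega}$ and $|x_0|=r$, for every $\alpha,K>0$; the only thing left is the interior inequality, which follows exactly as in your interior step, with the extra bounded contribution of $D^2\vp$ (and the zeroth-order dependence through $h$, handled via Proposition \ref{prop-cor2-conti}) absorbed by first taking $\alpha$ and then $K$ large, using the uniform ellipticity of $\widetilde{L}$. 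Your proof is repaired the same way: add the full $\vp(x)$, not its linearization at $x_0$.
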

\begin{proof}
We may assume that $\vp$ is defined in $\mathcal{O}$ since it can be easily extended. Let $h(x) = \vp(x) + \left( - \displaystyle\frac{K}{|x|^\alpha} +  \displaystyle\frac{K}{r^\alpha} \right) $ outside the ball $B_r(0)$. Then, by defining $h(x)$ properly inside the ball $B_r$, we may assume that $h$ is in $\mathcal{C}^2$ in $\mathcal{O}$. And, from the definition, the second and third statement are true for all $\alpha >0$ and $K>0$. So, we get the conclusion by choosing $\alpha$ and $K$ large enough.
\end{proof}

Let us define the function $\widetilde{g} = \widetilde{L}(D^2 h,h,x) $ on $\mathcal{O}$. Then, from proposition \ref{prop-cor2-conti} and lemma \ref{lem-bar-h}, $\widetilde{g}$ is continuous and, by lemma \ref{lem-bar-inv}, there is a continuous function $g(x)$ satisfying the following:
\begin{equation*}
\overline{f(x,y) + g(x)} = \widetilde{g}(x) 
\end{equation*}
for any given $f \in \mathcal{A}$.

\begin{lem} \label{lem-bar-g}
Let $g$ be the function defined as above for given $f \in \mathcal{A}$. Then, $ g(x) \le 0$.
\end{lem}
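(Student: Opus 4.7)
The plan is a monotonicity argument that combines the linear structure of the cell problem \eqref{eqn-cor2-f} in its right-hand side with a sharp $L^\infty$ comparison. First, I would exploit linearity: since \eqref{eqn-cor2-f} is linear in the source, so is the map $F \mapsto \overline{F}$, so $\overline{f+g}(x) = \overline{f}(x) + \overline{g}(x)$. Because $g$ depends only on $x$, the formula $\overline{g}(x) = c_0\, g(x)$ from the proof of lemma \ref{lem-bar-inv} applies, and the defining property $\overline{f+g}(x) = \widetilde{g}(x)$ of $g$ then gives
\[
c_0\, g(x) = \widetilde{g}(x) - \overline{f}(x).
\]

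Next, I would estimate each term on the right. The barrier term is handled by lemma \ref{lem-bar-h}, which provides $\widetilde{g}(x) = \widetilde{L}(D^2 h, h, x) \le -(\|f\|_\infty + 1)$ on $\Omega$. For $\overline{f}(x)$ I would argue by monotonicity, starting from the sharp bound $c_0 \le 1$. To obtain $c_0 \le 1$, observe that the constant $\psi_\e \equiv -1/\e^2$ is a viscosity sub-solution of \eqref{eqn-cor2-f} with source $f \equiv 1$: in the interior $-\e^2 \psi_\e + a_{ij} D_{ij} \psi_\e = 1 \ge 1$, and on the boundary $b^i D_i \psi_\e + \e^2 \psi_\e = -1 \le 0$. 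The comparison principle then yields $w_\e \ge -1/\e^2$, hence $c_0 = \overline{1} = \lim(-\e^2 w_\e) \le 1$. Combining linearity of the cell problem with lemma \ref{lem-cor2-com} gives monotonicity of $F \mapsto \overline{F}$; applied to the pointwise inequality $f(x,y) \ge -\|f\|_\infty$ this produces $\overline{f}(x) \ge \overline{-\|f\|_\infty}(x) = -c_0\|f\|_\infty \ge -\|f\|_\infty$.

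Inserting both bounds into the formula from the first step,
\[
c_0\, g(x) \le -(\|f\|_\infty + 1) - \overline{f}(x) \le -(\|f\|_\infty + 1) + c_0 \|f\|_\infty \le -1 < 0,
\]
and since $c_0 > 0$ this yields $g(x) < 0$, a fortiori the claimed $g(x) \le 0$. The only delicate step is the sharp estimate $c_0 \le 1$, which ultimately reflects the ``dissipative'' sign of the auxiliary term $\e^2 w_\e$ in the Neumann-type boundary condition on $\partial T^{\mathfrak{a}}$; once that is in place, the remainder of the proof is a short chain of linearity and comparison.
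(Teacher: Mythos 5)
Your proof is correct and takes essentially the same route as the paper: the identity $c_0\,g(x)=\widetilde{g}(x)-\overline{f}(x)$, the bound $\widetilde{g}\le -(\|f\|_\infty+1)$ from lemma \ref{lem-bar-h}, and a comparison-based lower bound $\overline{f}\ge -\|f\|_\infty$. Your detour through monotonicity of $F\mapsto\overline{F}$ and the estimate $c_0\le 1$ (via the constant sub-solution $-1/\e^2$) is just a rescaled form of the paper's one-step comparison of $w_\e$ with the constants $\pm\|f\|_\infty/\e^2$, which yields $|\overline{f}|\le\|f\|_\infty$ directly.
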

\begin{proof}
From the definition and the comparison, $|\widetilde{f}| \le |f|_\infty$. And from the construction of $h$, $\widetilde{g} \le -(|f|_\infty +1 )$
By combining those two inequalities, we have $g(x) = \displaystyle\frac{ \widetilde{g}(x) - \overline{f}(x) }{c_0} \le 0$.
\end{proof}

Now consider the equation
\begin{equation*} \begin{cases}
L(D^2 v_\e, v_\e, x) = f\left(x,\frac{x}{\e} \right) + g(x) &\text{ in } \mathcal{O} \\
b^i\left(\frac{x}{\e}\right) D_iv_\e(x) = 0 &\text{ on } \partial T_\e \cap \mathcal{O}\\
v_\e = h(x) &\text{ on } \partial \mathcal{O} \setminus T_\e
\end{cases} \end{equation*}
where $h$ is same in lemma \ref{lem-bar-h} and $g$ is same in lemma \ref{lem-bar-g}.

Since $\mathcal{O}$ is convex, if we assume all the conditions in the theorem \ref{thm-main-1} except (2), then $v_\e$ converges to $\widetilde{v}$ uniformly and $\widetilde{v}$ satisfies the equation
\begin{equation*} \begin{cases}
\widetilde{L}(D^2 \widetilde{v},\widetilde{v},x) = \widetilde{g} \text{ on } \mathcal{O} \\
\widetilde{v} = h \text{ on } \partial \mathcal{O}.
\end{cases} \end{equation*}

And hence, $h(x) = \widetilde{v}$ because of the comparison. Let us define $\tau(\e) = \sup_{x \in \mathcal{O}} | v_\e(x)  - h(x) |$. Then, from the uniform convergence, $\tau(\e) \ra 0$ as $\e \ra 0$. And, from the definition of $h$,
\begin{equation*}
u_\e(x) \le h(x) \le v_\e(x) + \tau(\e) \text{ on } \partial \Omega.
\end{equation*}
Since $v_\e$ is a super-solution in $\Omega$, we have the following by comparison:
\begin{equation*}
u_\e(x) \le  v_\e(x) + \tau(\e) \le h(x) + 2 \tau(\e) \text{ for } x \in \Omega.
\end{equation*}
By taking $*$ on both side, we can conclude the following
\begin{equation*}
u^*(x_0) \le h^*(x_0) = h(x_0) = \vp(x_0).
\end{equation*}
Similarly, we can show $u_*(x_0) \ge \vp(x_0)$. Hence we get the following:
\begin{lem}
Let $u_\e$ is the solution of equation \eqref{eqn-main-1}. Assume that the equation \eqref{eqn-main-1} satisfies the condition I and the compatibility condition. Suppose also that $\widetilde{L}$ is uniformly elliptic. Then, $u*(x) = u_*(x)$ on $\partial \Omega$ if $\Omega$ satisfies an exterior sphere condition. 
\end{lem}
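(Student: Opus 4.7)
The plan is to squeeze $u_\e$ near each boundary point $x_0 \in \partial\Omega$ between two barriers $v_\e^\pm$ built on an auxiliary convex domain, each of which homogenizes to a function that pinches $\vp(x_0)$ at $x_0$. This reduces the boundary question to the convex-domain homogenization already proved in Theorem \ref{thm-main-1}, because both barriers are defined on a ball rather than on $\Omega$ itself.

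Concretely, I would extend the coefficients $a_{ij}$, $b^i$, $c$ and $f$ to a large ball $\mathcal{O}$ containing $\Omega \cup B_r(x_1)$, where $B_r(x_1)$ is the exterior sphere at $x_0$. On $\mathcal{O}$, a singular function of the form
\begin{equation*}
h(x) = \vp(x) + K \bigl( r^{-\alpha} - |x - x_1|^{-\alpha} \bigr)
\end{equation*}
outside $B_r(x_1)$ (smoothly extended inside) is, for $\alpha$ and $K$ large, the $\mathcal{C}^2(\overline{\mathcal{O}})$ barrier with $\widetilde{L}(D^2 h, h, x) \le -(\|f\|_\infty + 1)$ on $\Omega$, $h \ge \vp$ on $\partial\Omega$, and $h(x_0) = \vp(x_0)$, as supplied by Lemma \ref{lem-bar-h}. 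Using Lemma \ref{lem-bar-inv} I then pick $g \in \mathcal{C}(\overline{\mathcal{O}})$ so that the homogenized source of $f(x,y) + g(x)$ equals the pre-assigned $\widetilde{g}(x) := \widetilde{L}(D^2 h, h, x)$; by Lemma \ref{lem-bar-g} the chosen $g$ satisfies $g \le 0$.

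Next, I would let $v_\e$ solve
\begin{equation*}
\begin{cases}
L(D^2 v_\e, v_\e, x, \tfrac{x}{\e}) = f(x,\tfrac{x}{\e}) + g(x) &\text{in } \mathcal{O}_\e \\
b^i(\tfrac{x}{\e}) D_i v_\e = 0 &\text{on } \partial T_\e \cap \mathcal{O} \\
v_\e = h &\text{on } \partial \mathcal{O} \setminus T_\e.
\end{cases}
\end{equation*}
Since $\mathcal{O}$ is convex, Theorem \ref{thm-main-1} applies and, together with uniform ellipticity of $\widetilde L$ (which gives uniqueness of the limit Dirichlet problem), yields $v_\e \to h$ uniformly on $\overline{\mathcal{O}}$. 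Because $g \le 0$ and $c(r,x,y)$ is non-increasing in $r$, $v_\e$ is a viscosity super-solution of the original equation on $\Omega_\e$. With $\tau(\e) := \|v_\e - h\|_{L^\infty(\mathcal{O})} \to 0$, we have $v_\e + \tau(\e) \ge h \ge \vp = u_\e$ on $\partial\Omega \setminus T_\e$, so the comparison principle of Lemma \ref{lem-precom-cp} gives $u_\e \le v_\e + \tau(\e)$ on $\Omega_\e$. Passing to upper envelopes at $x_0$ yields $u^*(x_0) \le h(x_0) = \vp(x_0)$. The reverse inequality $u_*(x_0) \ge \vp(x_0)$ follows from the symmetric construction with a lower barrier (using $\widetilde{L}(D^2 h,h,\cdot) \ge \|f\|_\infty + 1$ and $g \ge 0$), and combining with $u_*(x_0) \le u^*(x_0)$ completes the proof.

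The main obstacle lies in the two comparisons: first, verifying that $v_\e$ genuinely qualifies as a super-solution of the unperturbed problem on $\Omega_\e$ after adding a non-positive $g$, which uses monotonicity of $c$ in $r$ essentially; second, ensuring that the convergence $\tau(\e) \to 0$ is uniform up to $\partial\Omega$ and not merely on compact subsets of $\mathcal{O}$. The latter is fine because $\mathcal{O}$ is a ball and boundary regularity on $\partial\mathcal{O}$ is supplied by the convex-domain case already handled, so Theorem \ref{thm-main-1} applies to $v_\e$ on all of $\overline{\mathcal{O}} \supset \overline{\Omega}$.
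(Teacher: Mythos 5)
Your proposal is correct and follows essentially the same route as the paper: the exterior-sphere barrier $h$ of Lemma \ref{lem-bar-h}, the source correction $g$ from Lemmas \ref{lem-bar-inv} and \ref{lem-bar-g}, the auxiliary solution $v_\e$ on the convex ball $\mathcal{O}$ homogenizing to $h$ via Theorem \ref{thm-main-1}, and the comparison $u_\e \le v_\e + \tau(\e)$ giving $u^*(x_0) \le \vp(x_0)$ with the symmetric lower barrier for the other inequality. The only cosmetic slip is citing the periodic-cell comparison (Lemma \ref{lem-precom-cp}) where the comparison principle for \eqref{eqn-main-1} on $\Omega_\e$ is the one actually used, and noting that the super-solution property needs only $g\le 0$, with monotonicity of $c$ in $r$ entering through that comparison principle.
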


\begin{proof}[\bf{Proof of corollary \ref{cor-main-1} and corollary \ref{cor-main-2}}] \item
It follows immediately from theorem \ref{thm-main-1} and lemma above. 
\end{proof}

\section{Discrete Gradient Estimate}\label{sec-Discrete}
\subsection{}
In this section,we develop the following uniform estimates. Those two estimate tell us about the shape of $u_\e$. It turns out that $u_\e$ is almost Lipschitz continuous with an error of $\e$ order. Let us consider the following equation:
\begin{equation} \label{eqn-disc}
\begin{cases}
a_{ij}\left( \displaystyle\frac{x}{\e} \right) D_{ij}u_\e(x) + c\left(u_\e,\displaystyle\frac{x}{\e} \right) = f\left(x,\frac{x}{\e}\right) &\text{ in } \Omega \setminus T_\e \\
b^i\left(\displaystyle\frac{x}{\e} \right) D_iu_\e(x) = 0 &\text{ on } \Omega \cap \partial T_\e \\
u_\e = \vp &\text{ on } \partial \Omega \setminus T_\e
\end{cases}
\end{equation}
We note that the function $c$ is independent on $x$ variable. It is only difference between above equation and the main equation \eqref{eqn-main-1}. Throughout this section, we assume that the function $c(r,y)$ and $f(x,y)$ is differentiable with respect to $r$ and $x$ variable respectively. Additionally, we assume that $\Omega$ is convex and all the assumptions in chapter 1.
\begin{lem}[Discrete Gradient Estimate]\label{lem-disc-grad}
Let $u_\e$ be the solution of above equation and, the size of holes $a$ is smaller than ${\mathfrak a}_0$ where ${\mathfrak a}_0$ is same in theorem \ref{thm-cor2-elliptic}. Then, for given direction $e=e_k$, $|\La_{e}^{\e}u_{\e}(x)|$ is bounded uniformly on $\e$. That is, there exist $C=C(n,\lambda,\Lambda,\mu,\|f\|_\infty)$ and
\begin{equation}\label{eq-disc-grad}
|\La_{e}^{\e}u_{\e}(x)|=\displaystyle \frac{|u_{\e}(x+\e e)-u(x)|}{\e}\leq C
\end{equation}
for every $x \in \Omega_\e \cap (\e e + \Omega_\e)$.
\end{lem}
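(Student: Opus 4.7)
The plan is to derive a linear elliptic equation for the discrete difference $w_\e(x) := u_\e(x+\e e) - u_\e(x)$ on $D_\e := \Omega_\e \cap (\e e + \Omega_\e)$, show $|w_\e|\le C\e$ on the Dirichlet part $\partial D_\e\cap \partial\Omega$ using the boundary barriers from Lemma \ref{lem-bar}, and then propagate that bound to all of $D_\e$ by comparison against a uniform auxiliary barrier. Dividing by $\e$ then yields \eqref{eq-disc-grad}.

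First I would exploit the $\e$-periodicity of $a_{ij}$ and $b^i$: since $e = e_k \in \Z^n$, the shifted function $u_\e(\cdot+\e e)$ solves the same PDE with coefficients evaluated at $x/\e$, with data shifted only in the slow variable, namely $a_{ij}(x/\e)D_{ij}u_\e(x+\e e) + c(u_\e(x+\e e), x/\e) = f(x+\e e, x/\e)$. Subtracting the equation for $u_\e(x)$ and applying the mean value theorem in $r$ for $c$ (differentiable and non-increasing) and in $x$ for $f$, one obtains
\begin{equation*}
a_{ij}(x/\e) D_{ij} w_\e + c_0(x) w_\e = \e R_\e(x) \quad \text{in } D_\e,
\end{equation*}
with $c_0(x)\le 0$ and $|R_\e|\le \|\partial_{x_k}f\|_{L^\infty}$. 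Since $\partial T_\e$ is invariant under translation by $\e e$, both $u_\e$ and $u_\e(\cdot+\e e)$ satisfy the Neumann condition at the same interior hole point, so subtracting gives $b^i(x/\e)D_iw_\e = 0$ on $\partial T_\e\cap D_\e$.

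On the Dirichlet part of $\partial D_\e$ either $x\in\partial\Omega$ or $x+\e e\in\partial\Omega$. Using the sandwich $h^-_\e\le u_\e\le h^+_\e$ from Lemma \ref{lem-bar}, the fact that $h^\pm_\e = P + \e V\xi + \e^2\widehat{w}_\e + O(\e)$ are $C^2$ with gradient bounded uniformly in $\e$ (via Lemma \ref{lem-cor1-dv2} and the $\mathcal{C}^{2,\alpha}$ estimate on $\widehat{w}_\e$), and the pointwise closeness $|h^\pm_\e(x_0)-\vp(x_0)|\le C\e$ at $x_0\in\partial\Omega$, one concludes $|u_\e(z)-\vp(z_0)|\le C\e$ whenever $|z-z_0|\le \e$ and $z_0\in\partial\Omega$. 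Together with the $C^1$ bound on $\vp$, this gives $|w_\e|\le C\e$ on $\partial D_\e\cap\partial\Omega$.

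For the interior propagation I would construct a uniformly bounded $\psi_\e\ge 0$ on $\Omega$ with $a_{ij}(x/\e)D_{ij}\psi_\e\le -1$ in $\Omega_\e$ and $b^i(x/\e)D_i\psi_\e\ge 0$ on $\partial T_\e\cap\Omega$; then $\pm K\e\psi_\e \mp w_\e$ (with $K$ chosen larger than $\|R_\e\|_\infty$) has the correct sign against the linear operator (using $c_0\psi_\e\le 0$) and the correct Neumann sign, so comparison against the Dirichlet data yields $|w_\e|\le C\e$ throughout $D_\e$. The main obstacle is producing $\psi_\e$ with an $\e$-uniform bound, because a purely macroscopic choice cannot handle the $O(1)$ Neumann defect arising on every hole. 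I would use the same two-scale ansatz as in the proof of Lemma \ref{lem-bar}, namely $\psi_\e(x) = \Phi(x) + \e V(x/\e)\cdot D\Phi(x) + \e^2\widehat{w}_\e(x/\e;\,D^2\Phi,0,0)$ with $\Phi(x) = A - \tfrac{B}{2}|x|^2$ ($B$ large, $A$ chosen so that $\psi_\e\ge 0$), so that the first corrector $V$ cancels the leading Neumann defect produced by $\Phi$ and the second-corrector term $\widehat{w}_\e$ absorbs the $O(1)$ remainder in the interior equation—exactly the structure used in Lemma \ref{lem-bar}, with the driving quadratic chosen to enforce the interior inequality rather than a Dirichlet condition.
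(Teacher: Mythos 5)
Your proposal is correct and follows essentially the same route as the paper's proof: linearize the difference quotient via periodicity and the mean value theorem, bound it near $\partial\Omega$ using the barriers $h^\pm_\e$ of Lemma \ref{lem-bar}, and propagate the bound inward by comparison with a two-scale corrected barrier whose construction rests on the correctors and the uniform ellipticity of $\overline{L}$ (hence $\mathfrak{a}\le{\mathfrak a}_0$). The only cosmetic difference is that you build a fresh barrier $\psi_\e$ from a concave quadratic, while the paper simply reuses $P_\e$ from Lemma \ref{lem-bar} shifted by the boundary supremum of $U$.
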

\begin{proof}
Let $\widetilde{\Omega}^\e = \Omega \bigcap (\e e + \Omega)$, $\widetilde{\Omega}_\e = \widetilde{\Omega}^\e \setminus \overline{T}_\e$ and $U = \displaystyle \frac{u_\e(x+ \e e) -u_\e(x)}{\e}$ for given $e$ where $e=e^i$ is a i-th standard unit vector in $\R^n$. 
Then, since $u_\e(x)$ and $u_\e(x+\e e)$ are solutions, we have
\begin{equation*} \begin{aligned}
a_{ij}\left(\displaystyle\frac{x+ \e e}{\e}\right) D_{ij}u_\e(x+\e e) + c\left(u_\e(x+ \e e), \displaystyle\frac{x+ \e e}{\e}\right) &= f\left(x + \e e, \frac{x+ \e e}{\e}\right) \\
a_{ij}\left(\displaystyle\frac{x}{\e}\right) D_{ij}u_\e(x) + c(u_\e(x), \frac{x}{\e}) &= f\left(x, \displaystyle\frac{x}{\e}\right). \\
\end{aligned} \end{equation*}

Since $a_{ij}\left(\displaystyle\frac{x+ \e e}{\e}\right)=a_{ij}\left(\displaystyle\frac{x}{\e}\right)$, $c\left(u_\e(x+ \e e), \displaystyle\frac{x+ \e e}{\e}\right) = c\left(u_\e(x+ \e e), \displaystyle\frac{x}{\e}\right)$ and $f\left(x + \e e, \displaystyle\frac{x+ \e e}{\e}\right) = f\left(x + \e e, \displaystyle\frac{x}{\e}\right)$, we have
\begin{equation*}
a_{ij}\left(\displaystyle\frac{x}{\e}\right) D_{ij} U(x) + \displaystyle\frac{c\left(u_\e(x + \e e),\frac{x}{\e}\right) - c(u_\e(x),\frac{x}{\e})}{\e} = \displaystyle\frac{f\left(x +\e e,\frac{x}{\e}\right) - f\left(x,\frac{x}{\e}\right)}{\e}.
\end{equation*}
Because $c$ is differentiable in $r$ variable and $f$ is differentiable in $x$ variables, by the mean value theorem, we can find $r^* = r^*(x,\e,e)$ and $x^*=x^*(x,\e,e)$ which satisfies
\begin{equation*}
a_{ij}\left(\frac{x}{\e}\right) D_{ij} U(x) + c_r\left(r^*,\frac{x}{\e}\right) U = D_e f\left(x^*,\frac{x}{\e}\right)
\end{equation*}
where $x \in \widetilde{\Omega}_\e$.
And from the boundary condition of $u_\e(x)$ and $u_\e(x + \e e)$, $b^i\left(\displaystyle\frac{x}{\e}\right)D_i U(x) = 0$ on $\partial T_\e \bigcap \widetilde{\Omega}^\e$.

Now we are going to prove the boundedness of $U$ on $\partial \widetilde{\Omega}^\e \setminus \overline{T}_\e$. Let $x$ be a point in $\widetilde{\Omega}^\e \setminus \overline{T}_\e$. Since $x \in \widetilde{\Omega}^\e \setminus \overline{T}_\e$, $x \in \partial \Omega$ or $x + \e e \in \partial \Omega$. We assume $x \in \partial \Omega$ since the other case is similar. Let $h^+_\e$ and $h^-_\e$ be functions satisfying \eqref{eqn-bdd-h}. Then, we have
\begin{equation*} \begin{aligned}
U(x) &= \displaystyle\frac{u_\e(x+\e e) -u_\e(x)}{\e} \\
&\le \displaystyle\frac{h^+_\e(x+\e e) -h^-_\e(x)}{\e} \\
&\le \displaystyle\frac{h^+_\e(x+\e e) -h^+_\e(x)}{\e} + \displaystyle\frac{h^+_\e(x, e) -h^-_\e(x)}{\e}  \\
&= A_1 + A_2
\end{aligned} \end{equation*}

By \eqref{eqn-bdd-h}, $A_2$ is bounded and, by the definition of $h^+_\e$, $A_1$ is bounded. So, $U(x)$ is bounded above. And it also bounded below by using similar argument.

In summary, $U$ satisfies
\begin{equation*} \begin{cases}
a_{ij}\left(\displaystyle\frac{x}{\e}\right) D_{ij} U(x) + c_r\left(r^*,\displaystyle\frac{x}{\e}\right) U = D_e f\left(x^*,\displaystyle\frac{x}{\e}\right) &\text{ in } \widetilde{\Omega}_\e \\
b^i\left(\displaystyle\frac{x}{\e}\right) D_i U(x) = 0 &\text{ on } \partial T_\e \bigcap \widetilde{\Omega}^\e \\
|U| \text{ is bounded } &\text{ on } \partial \widetilde{\Omega}^\e \setminus \overline{T}_\e.
\end{cases} \end{equation*}

It can be shown from the comparison if there is a super-solution which is bounded uniformly on $\e$, then we are done. Let $h^+ = \sup_{x \in \partial \widetilde{\Omega}^\e \setminus T_\e} U(x) + P_\e(x)$ where $P_\e(x)$ is same in lemma \ref{lem-bar}. Then, we can show that $h^+$ is a super solution for large $K$. Hece $U$ is bounded uniformly on $e$ since $h^+$ is bounded uniformly on $\e$.
\end{proof}

\begin{remark}
$U$ is also bounded when $\Omega$ satisfies the uniform exterior sphere condition. And the proof is similar to the proof of case $\Omega$ is convex. 
\end{remark}

\begin{lem}[$\e$-Flatness] \label{lem-disc-flat}
Let $u^{\e}$ be the viscosity solution of \eqref{eqn-disc}. And suppose all the conditins in lemma \ref{lem-disc-grad} are satisfied. Then, there is a constant $C>0$ which is independent of $\e$ satisfying
\begin{equation*}
|u_{\e}(x_1)-u_{\e}(x_2)|\leq\, C \e
\end{equation*}
for every $x_1$, $x_2$ contained in a same $\e$-cell of $T_\e \cap \Omega$.
\end{lem}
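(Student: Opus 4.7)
Plan: The strategy is to rescale the problem at the $\e$-scale, reducing the claim to an $\e$-independent oscillation estimate on a single rescaled cell, and then combine the discrete gradient estimate (lemma \ref{lem-disc-grad}) with the interior/boundary elliptic regularity established in section \ref{sec-com}.

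Fix an $\e$-cell $C_m = \e m + \e Q$ with $Q = [-\frac{1}{2},\frac{1}{2}]^n$, take $x_1, x_2 \in C_m \cap \Omega_\e$, and define the rescaled function $\hat u(z) := u_\e(\e z)/\e$ on $(\Omega/\e)\setminus T^{\mathfrak a}$. Exploiting the periodicity of $a_{ij}$, $b^i$, $c$ and $f$ in their $y$-variables, $\hat u$ satisfies
\begin{equation*}
a_{ij}(z)\, D_{ij}\hat u(z) = \e\bigl[f(\e z, z) - c(\e\hat u, z)\bigr]\ \text{in}\ (\Omega/\e)\setminus T^{\mathfrak a},\qquad b^i(z)\, D_i\hat u(z) = 0\ \text{on}\ \partial T^{\mathfrak a},
\end{equation*}
with right-hand side of size $O(\e)$. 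Since $\osc_{C_m\cap\Omega_\e} u_\e = \e\cdot\osc_{(m+Q)\setminus B_{\mathfrak a}(m)}\hat u$, it suffices to prove the $\e$-uniform oscillation bound $\osc \hat u \le C$ on one rescaled cell. Lemma \ref{lem-disc-grad}, written in the $z$-variable, becomes the discrete gradient bound $|\hat u(z+e_k) - \hat u(z)| \le C$ for every lattice direction $e_k$ and every admissible $z$.

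Pick a reference point $z_0\in(m+Q)\setminus B_{\mathfrak a}(m)$ and set $w := \hat u - \hat u(z_0)$; this satisfies the same equation and the same homogeneous Neumann condition. Chaining the discrete gradient bound along lattice paths gives $|w(z_0+p)| \le Cn|p|_\infty$ at all shifted-lattice translates $z_0+p$ ($p\in\Z^n$) lying in the rescaled domain. On the enlarged patch $D := (m+3Q)\setminus T^{\mathfrak a}$, a union of $3^n$ adjacent rescaled cells, I combine three ingredients to bound $w$: (i) the discrete bound above at the shifted-lattice translates of $z_0$, where $|w|\le Cn$ uniformly in $\e$; (ii) the maximum principle together with Hopf's lemma applied on each individual cell, which excludes the hole boundary $\partial B_{\mathfrak a}(m')$ and interior points from being extrema of $w$, hence forces the extrema of $w|_{(m'+Q)\setminus B_{\mathfrak a}(m')}$ onto the outer cube face $\partial(m'+Q)$; and (iii) the interior Krylov--Safonov/Schauder regularity of $\hat u$ across cell interfaces. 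Iterating (ii)-(iii) across the $3^n$ cells of $D$ and matching extremum locations to shifted-lattice translates yields $\|w\|_{L^\infty(D)}\le C$.

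With $\|w\|_{L^\infty(D)} \le C$ in hand, the boundary $C^{2,\alpha}$ Schauder estimate of lemma \ref{lem-precom-c2} applied to $w$ on $(m+Q)\setminus B_{\mathfrak a}(m)$ (with its $O(\e)$ right-hand side) gives $\|w\|_{C^{2,\alpha}((m+Q)\setminus B_{\mathfrak a}(m))} \le C\bigl(\|w\|_{L^\infty(D)} + \e\bigr) \le C$, so $\osc \hat u \le \osc w \le C$, and re-scaling delivers $|u_\e(x_1)-u_\e(x_2)| \le \e\,\osc \hat u \le C\e$, as required. The main obstacle is the propagation step of the previous paragraph: the Schauder estimate a priori needs an $L^\infty$ bound of $w$ on a neighborhood, while the discrete gradient estimate only supplies values along a one-dimensional chain of shifted-lattice translates; bridging these via the Neumann structure (Hopf's lemma) on each cell together with the interior regularity is the delicate part of the argument.
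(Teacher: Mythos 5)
Your overall skeleton (rescale to the unit cell, use the discrete gradient estimate of lemma \ref{lem-disc-grad} to pass information between an enlarged patch and the central cell, conclude an $\e$-independent oscillation bound) is the same as the paper's, but the step on which everything hinges is not carried out correctly. Your ingredients (ii)--(iii) do not yield $\|w\|_{L^\infty(D)}\le C$: the maximum principle and Hopf's lemma only tell you \emph{where} extrema of $w$ can sit (on the outer cube faces, not on $\partial T^{\mathfrak a}$ or in the interior), they do not bound the extremal values; and the ``interior Krylov--Safonov/Schauder regularity'' you invoke controls oscillations only in terms of the $L^\infty$ norm you are trying to establish, so that step is circular. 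The phrase ``matching extremum locations to shifted-lattice translates'' has no justification: the maximum on a cell face is a continuum point, not a lattice translate of $z_0$, and nothing in your argument transfers the bound $|w(z_0+p)|\le Cn|p|_\infty$ from the discrete set $\{z_0+p\}$ to such a point. Note also that iterating the maximum principle outward only pushes the extremum to $\partial\bigl(\tfrac1\e\Omega\bigr)$, where in your normalization $\hat u=\vp(\e z)/\e$ is of size $1/\e$, so no global maximum-principle argument can produce an order-one bound.

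The missing mechanism is the Harnack inequality, which is exactly what the paper uses (``from the similar reason to lemma \ref{lem-precom-osc}''). The paper sets $\widetilde u^*=\widetilde u_\e-\inf_{Q_1}\widetilde u_\e\ge 0$ on the enlarged patch $Q_1$, applies the Harnack estimate (away from the holes, with a barrier near $\partial T^{\mathfrak a}$ as in lemma \ref{lem-precom-osc}) to get $S_1\le C\bigl(I_1+\e\|f_\e\|_\infty\bigr)$, and then uses the discrete gradient estimate to see that the infimum over the central cell is within $C\e$ of the infimum over $Q_1$, i.e.\ $I_1\le C\e$; the multiplicative structure of Harnack then gives $S_1\le C\e$, which is the claim. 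In your normalization $\hat u=u_\e(\e\cdot)/\e$ the same argument works verbatim with target bound $C$ instead of $C\e$: apply Harnack to $\hat u-\inf_D\hat u\ge 0$ on $D$ and use the chained discrete bound to show $\inf_{\text{cell}}\bigl(\hat u-\inf_D\hat u\bigr)\le Cn$. Without inserting this Harnack step your proof has a genuine gap. Two further (minor) points: your final appeal to the $C^{2,\alpha}$ estimate of lemma \ref{lem-precom-c2} is superfluous once an $L^\infty$ bound on $D$ is known, and you never treat cells whose enlarged patch meets $\partial\Omega$, which the paper dispatches separately with the boundary barriers from lemma \ref{lem-bar}.
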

\begin{proof}

Let $\widetilde{u}_\e(y) = u_\e(\e y)$. Then $\widetilde{u}_\e$ satisfies
\begin{equation*} \begin{cases}
a_{ij}(y) D_{ij} \widetilde{u}_\e(y) = \e^2 [f(\e y,y) - c(\widetilde{u}_\e(\e y),y)] &\text{ in } \frac{1}{\e} \Omega \setminus T^{\mathfrak a} \\
b^i(y) D_i \widetilde{u}_\e(y) = 0 &\text{ on } \frac{1}{\e}\Omega \bigcap \partial T^{\mathfrak a} \\
\widetilde{u}_\e = 0 &\text{ on } \partial \frac{1}{\e}\Omega \setminus T^{\mathfrak a}.
\end{cases} \end{equation*}

Let $Q$ be an unit cell with center 0 in $\R^n$ and let $B_a=B_a(0)$ be the intersection between $Q$ and $T^{\mathfrak a}$.
If $Q \cap \partial \left( \frac{1}{\e}\Omega \right)$ is nonempty, then we can prove the lemma by the barriers $\pm h_\e$ in lemma \ref{lem-disc-grad}. So, we will assume that $Q \subset \left( \frac{1}{\e}\Omega \right) \setminus T^{\mathfrak a}$ and $Q \cap \partial \left( \frac{1}{\e}\Omega \right) = \emptyset$.

Let $Q_1 = \left( -\displaystyle\frac{3}{2} , \frac{3}{2} \right) ^n $ be a cuve in $\R^n$. We note that $Q \subset Q_1$ and $Q_1 \setminus \overline{B}_a \subset \R^n \setminus T^{\mathfrak a}$ for small $a$. 
First, let $\widetilde{u}^* = \widetilde{u}_\e - \inf{Q_1} \widetilde{u}_\e$ then,
\begin{equation*} \begin{cases}
a_{ij}D_{ij} \widetilde{u}^* = \e^2 (f(\e y,y) - c(\widetilde{u}_\e(\e y),y) &\text{ in } Q_1 \setminus T^{\mathfrak a} \\
b^i(y)D_i \widetilde{u}^*  = 0  &\text{ on } Q_1 \bigcap \partial T^{\mathfrak a} \\
\end{cases} \end{equation*}

Let $S_0 = \sup_{Q_1 \setminus T^{\mathfrak a}} \widetilde{u}^*$, $I_0 = \inf_{Q_1 \setminus T^{\mathfrak a}} \widetilde{u}^* =0$, $S_1 = \sup_{Q \setminus T^{\mathfrak a}} \widetilde{u}^*$ and $I_1 = \inf_{Q \setminus T^{\mathfrak a}} \widetilde{u}^* =0$.

since we know that $u_\e$ is bounded, $f_\e = \e (f(\e y,y) - c(\widetilde{u}_\e(\e y),y)$ is bounded and small if $\e$ is small enough.
From the similar reason to lemma \ref{lem-precom-osc}, we have
\begin{equation*}
S_1 \le C(n,\lambda,\Lambda) (I_1 + \e \|f_\e\|_\infty).
\end{equation*}

So, if $\e \le \e_0$ for some $\e_0$, then we have
\begin{equation*}
S_1 \le C(n,\lambda,\Lambda) (I_1 + \e).
\end{equation*}

And from the discrete estimate, we have
\begin{equation*}
|\widetilde{u}^*(x) - \widetilde{u}^*(y)| = |u_\e (\e x) - u_\e (\e y)| \le C \e.
\end{equation*}
From this, we have
\begin{equation*} \begin{aligned}
S_0 \le S_1 + C \e \\
0 = I_0 \ge I_1 - C \e.
\end{aligned} \end{equation*}

Combining these result, we have
\begin{equation*}
S_1 \le C \e
\end{equation*}
where $C$ is a constant which is uniform on $\e$.
\end{proof}

The $\e$-Flatness and Discrete Gradient Estimate will give us Global  Lipschitz Estimate with  $\e$-error.
\begin{thm}[Global $\e$-Lipschitz Estimate] \label{thm-disc-global}
There is uniform constants $C>0$ such that
\begin{equation*}
|u_{\e}(x)-u_{\e}(y)|\leq\, C(|x-y|+\e)
\end{equation*}
for $x,y\in  \Omega_{\e}$.
\end{thm}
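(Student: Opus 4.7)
The plan is to combine the Discrete Gradient Estimate (Lemma~\ref{lem-disc-grad}) with the $\e$-Flatness Estimate (Lemma~\ref{lem-disc-flat}) through a lattice-path argument. Given $x,y\in\Omega_\e$, the scheme has three steps: replace $x$ by a nearby safe lattice corner $x^*$ using $\e$-flatness, transport from $x^*$ to $y^*$ along an axis-parallel lattice path of $\e$-edges via the discrete gradient estimate on each edge, and replace $y^*$ by $y$ using $\e$-flatness again.

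The safe lattice is $\Lambda^\e := \e\Z^n + \e(\tfrac12,\ldots,\tfrac12)$, i.e.\ the shifted lattice of common corners of the $\e$-cells $Q_\alpha^\e := \e\alpha + \e[-\tfrac12,\tfrac12]^n$ ($\alpha\in\Z^n$). Because $\mathfrak{a}<\tfrac12$, every point of $\Lambda^\e$ sits at distance $\e\sqrt{n}/2 > \e\mathfrak{a}$ from every hole center, so $\Lambda^\e \subset \R^n\setminus\overline{T}_\e$. For $x\in\Omega_\e$ lying in a cell $Q_{\alpha(x)}^\e\subset\Omega$, I pick $x^*$ to be any corner of that cell; then $x$ and $x^*$ belong to the same $\e$-cell of $\Omega_\e$, so Lemma~\ref{lem-disc-flat} gives $|u_\e(x)-u_\e(x^*)|\le C\e$, and symmetrically for $y^*$.

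Next, I connect $x^*$ to $y^*$ by an axis-parallel lattice path $x^* = z_0,z_1,\ldots,z_N = y^*$ in $\Lambda^\e$ with $z_{i+1}-z_i = \pm\e e_{k_i}$. By the convexity of $\Omega$ the segment $[x^*,y^*]$ lies in $\Omega$, and a monotone staircase tracking this segment furnishes such a path entirely inside $\Lambda^\e\cap\Omega_\e$ with $N\le\|x^*-y^*\|_1/\e \le \sqrt{n}\,|x-y|/\e + O(1)$. Applying Lemma~\ref{lem-disc-grad} to each edge and summing,
$$ |u_\e(x^*)-u_\e(y^*)| \le \sum_{i=0}^{N-1}|u_\e(z_{i+1})-u_\e(z_i)| \le C N \e \le C(|x-y|+\e). $$
The triangle inequality combined with the two $\e$-flatness bounds then yields the claim.

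The main obstacle is the boundary: one must ensure that every $z_i$ and every consecutive pair $z_i,z_{i+1}$ lie in $\Omega_\e$, so that Lemma~\ref{lem-disc-grad} applies on each edge, and that the cells containing $x$ and $y$ possess a corner in $\overline{\Omega}$ to serve as $x^*$ and $y^*$. Both issues are manageable under the convexity assumption together with the explicit barriers $h^\pm_\e$ of Lemma~\ref{lem-bar}: when the cell of $x$ is clipped by $\partial\Omega$, the bound $|u_\e(x)-h^\pm_\e(x)|\le C\e$ lets me re-route the endpoint to the nearest fully interior cell at an additional cost of $O(\e)$, which is absorbed into the final estimate.
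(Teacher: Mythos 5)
Your argument is correct and is essentially the paper's intended route: the paper gives no written proof of Theorem \ref{thm-disc-global} beyond the remark that it follows from combining Lemma \ref{lem-disc-grad} with Lemma \ref{lem-disc-flat}, and your lattice-path implementation (flatness at the two endpoints, the discrete gradient estimate summed over an axis-parallel path of $O(|x-y|/\e+1)$ edges) is the natural way to make that remark precise. The only step worth stating explicitly is why the staircase stays in $\Omega_\e$: on the convex domain $\Omega$ the distance to $\partial\Omega$ is concave, so if both endpoints lie at distance $\gtrsim \sqrt{n}\,\e$ from $\partial\Omega$ the tracked segment, and hence the staircase, does too, while points within $C\e$ of $\partial\Omega$ are handled, as you indicate, by comparing $u_\e$ to the barriers of Lemma \ref{lem-bar} at a nearby boundary point.
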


\noindent{\bf Acknowledgement.} Ki-Ahm Lee was supported by Basic
Science Research Program through the National Research Foundation of
Korea(NRF)  grant funded by the Korea
government(MEST)(2010-0001985).

\end{document}